\newtheorem{thm}{Theorem}[subsection]
\newtheorem{lem}[thm]{Lemma}
\newtheorem{cor}[thm]{Corollary}
\newtheorem{prop}[thm]{Proposition}
\theoremstyle{definition}
\newtheorem{defn}[thm]{Definition}
\newtheorem{eg}[thm]{Example}
\theoremstyle{remark}
\newtheorem{rem}[thm]{\bf Remark}
\numberwithin{equation}{section}
\def\vs#1{\vskip .#1 cm} 
\def\noi{\noindent}
\def\larrow{\leftarrow}
\def\rarrow{\rightarrow}
\def\xrarrow{\xrightarrow} 
\def\smallcoprod{\,{\textstyle{\coprod}}\,}
\def\noteq{\neq}
\def\<{\left<}
\def\>{\right>}
\DeclareMathOperator{\mmod}{mod}
\DeclareMathOperator{\ind}{ind}
\DeclareMathOperator{\im}{im}
\DeclareMathOperator{\coker}{coker}
\DeclareMathOperator{\Hom}{Hom}%
\DeclareMathOperator{\Ext}{Ext}%
\DeclareMathOperator{\Aut}{Aut}
\DeclareMathOperator{\End}{End}
\newcommand{\field}[1]{\mathbb{#1}}
\newcommand{\ZZ}{\ensuremath{{\field{Z}}}}
\newcommand{\RR}{\ensuremath{{\field{R}}}}
\newcommand{\QQ}{\ensuremath{{\field{Q}}}}
\newcommand{\NN}{\ensuremath{{\field{N}}}}
\newcommand{\length}[1]{\ensuremath {\| #1 \|}}
\newcommand{\commentout}[1]{}
\def\ll{\lambda}
\def\LL{\Lambda}
\newcommand{\cc}{{C}}
\newcommand{\cT}{\ensuremath{{\mathcal{T}}}}
\def\a{\alpha}
\def\b{\beta}
\def\g{\gamma}
\def\f{\phi}
\def\k{\kappa}
\def\s{\sigma}
\def\t{\tau}
\def\z{\zeta}
\def\then{\Rightarrow}
\def\into{\hookrightarrow}
\def\onto{\twoheadrightarrow}
\def\ul{\underline}
\def\k{\Bbbk}
\def\maptoM{\pi}
\def\elemofU{\psi}
\def\st{\,|\,}
\def\Eti{(E^t)^{-1}}
\begin{document}


\title{Cluster complexes via semi-invariants}

\author{Kiyoshi Igusa}
\address{Brandeis University}
\email{igusa@brandeis.edu}

\author{Kent Orr}
\address{Indiana University}
\email{korr@indiana.edu}
\thanks{Supported by NSF grant DMS-0406563.}

\author{Gordana Todorov}
\address{Northeastern University}
\email{todorov@neu.edu}
\thanks{Supported by NSA}

\author{Jerzy Weyman}
\address{Northeastern University}
\email{j.weyman@neu.edu}
\thanks{Supported by NSF grant DMS-0600229}

\date{\today}

\begin{abstract}
We define and study virtual representation spaces for vectors having both positive and negative dimensions at the vertices of a quiver without oriented cycles. We consider the natural semi-invariants on these spaces which we call {\em virtual semi-invariants} and prove that they satisfy the three basic theorems: the First Fundamental Theorem, the Saturation Theorem and the Canonical Decomposition Theorem.

In the special case of Dynkin quivers with $n$ vertices this gives the fundamental interrelationship between supports of the semi-invariants and the Tilting Triangulation of the $(n-1)$-sphere.
\end{abstract}

\maketitle

\def\repspace{presentation space}
\setcounter{section}{-1}
\tableofcontents

\section{Introduction} 
This paper initiates a project to apply quiver representations and their semi-invariants to expose compatible combinatorial underpinnings for the tilting objects of cluster categories (and hence, clusters for cluster algebras), and for the homology of nilpotent groups.  Here we focus on semi-invariants and tilting objects in cluster categories, by extending the classical semi-invariant results of Kac, Schofield, Derksen and Weyman, and interpreting the fundamental results about cluster categories from \cite{[BMRRT]} to this setting.  

Modeling from K-theory, for an arbitrary quiver without oriented cycles, we consider semi-invariants in the derived category by extending the definition of representation spaces to {\em virtual dimension vectors} of virtual modules over the path algebra of the quiver.  Such virtual dimension vectors have both positive and negative coordinates.
 Specifically, instead of working with representation spaces of the quiver acted upon by products of general linear groups, we work with {\em presentation spaces}, the spaces $\Hom_Q (P_1,P_0)$ for projective modules $P_0, P_1$. The natural action of the group  $\Aut(P_0)\times (\Aut(P_1))^{op}$  replaces the action of the product of general linear groups and we study the semi-invariants for these actions. 
 We construct the {\em virtual representation space} for virtual dimension vectors $\a \in \ZZ^n$ as a direct limit 
 $$R^{vir}(\a) = \lim_{P} \Hom_Q(P_1\smallcoprod P, P_0 \smallcoprod P),$$
where $\a = \ul\dim P_0 - \ul\dim P_1$ and the direct limit is over all projectives $P$. The natural semi-invariants are obtained as inverse limits of semi-invariants on the presentation spaces. We call them {\em virtual semi-invariants}. 

We prove the three basic theorems in the virtual setting. The Virtual First Fundamental Theorem~\ref{thm: vFFT} relates virtual semi-invariants to quiver representations, i.e. all virtual semi-invariants are linear combinations  of determinantal semi-invariants. The Virtual Saturation Theorem~\ref{virsatthm} describes the supports of semi-invariants, i.e. describes when the determinantal semi-invariants are non-zero. The Virtual Generic Decomposition Theorem~\ref{thm:generic decomposition} determines the dimension vectors of the indecomposable components of all generic representations of all dimension vectors.

Using the above results about virtual semi-invariants, to each quiver with $n$ vertices, we associate a simplicial complex $\cT(Q)$ together with a mapping of its geometric realization to the $(n-1)$-dimensional sphere
\[
	\ll:|\cT(Q)|\to S^{n-1}.
\]
The simplices of $\cT(Q)$ are virtual partial tilting sets of Schur roots and shifted projective roots.
We call this complex the {\em Complex of Virtual Tilting Sets}. In general it has infinitely many simplices. The continuous mapping $\ll$ maps each closed simplex $\s$ of $|\cT(Q)|$ to the geodesic simplex in the sphere with the same vertex set as $\s$. If we restrict to a certain subcomplex $|\cT'(Q)|$ of $|\cT(Q)|$ spanned by the ``minimal'' Schur roots we get a continuous monomorphism onto a dense subset of the sphere. Strictly speaking this is not a triangulation of a subset of the sphere. However, it does express a subset of the sphere as a union of simplices with disjoint interiors.

The $(n-2)$-dimensional faces of our ``triangulation'' are labelled by dimension vectors of indecomposable representations, via semi-invariants, more precisely, supports of semi-invariants of prescribed weights. This labelling depends on the orientation of the quiver. For the vectors with nonnegative coordinates we recover the simplicial complex corresponding to generic decompositions of dimension vectors obtained in \cite{[DW2]}.

In the case of a Dynkin quiver the Complex of Virtual Tilting Sets gives a finite triangulation of the sphere, and it coincides with the Cluster Tilting Triangulation. Its simplices correspond to tilting objects in a corresponding Cluster Category defined in \cite{[BMRRT]}. In addition to the Cluster Tilting Triangulation, we get the labeling of codimension one faces by dimension vectors. This depends on the orientation of the quiver, unlike the Cluster Tilting Triangulation itself. One can show this triangulation is Poincar\'e dual to the Cluster Associahedron of~\cite{[CFZ]}.

In a future paper, we will study the presentation, given by semi-invariants, of the nilpotent
group associated to a Dynkin quiver. This is almost the same as the Steinberg presentation using the Chevalley commutator relations \cite{[St]}. We will also examine the residually nilpotent groups associated to quivers of affine type (also called tame quivers). In prior work \cite{[IO]}, two of the authors constructed an explicit chain resolution for torsion free nilpotent groups and used them to study Milnor's $\overline\mu$-link invariants. We will show in our next paper how this is related to semi-invariants.

Due to our diversity of co-authors we have written this paper to be readable by both topologists and algebraists. 

The following is an outline of the paper. Throughout the paper we assume that $Q$ has no oriented cycles.
In Section~\ref{sec1} we recall the basic definitions and properties of quiver representations and establish notation. In particular we recall the definition of the canonical projective presentation of any representation which Schofield used in his original study of semi-invariants on quiver representations. 
In Section~\ref{sec2} we discuss fundamental known results on semi-invariants of quivers and generic decompositions.  These first two sections serve the reader as background.   
Section~\ref{sec3} defines presentation spaces and their semi-invariants which form directed systems and as such are used to define virtual representation spaces and virtual semi-invariants.  We motivate our definition of virtual semi-invariants in Section~\ref{sec4}
by making  precise, in the case of nonnegative dimension vectors,  the relationship between classical semi-invariants, and semi-invariants on certain special presentation spaces; we show that these rings of semi-invariants are isomorphic.
In Section~\ref{sec5} we prove some properties of presentation spaces and their semi-invariants, including the description of the general elements in presentation spaces.
In Section~\ref{sec6} we introduce the virtual representation spaces. We build on the material of Section~\ref{sec5} to prove  the Stability Theorem which shows that the 
general element in the stable representation space is chain homotopy equivalent to a minimal projective presentation, and thus lies in the orbit of the image of an associated space of minimal presentations.
We extend the Generic Decomposition and the First Fundamental Theorem to virtual dimension vectors.  
In Section~\ref{sec7} we prove the Virtual Saturation Theorem. In Section 7 we construct the simplicial complex of generalized cluster tilting sets and the mapping of the realization of this complex to the sphere. We derive the general properties of this mapping as a consequence of the generic decomposition theorem. In Section 7.2 we also review the definition and properties of the cluster category for comparison. 
Finally in Section~\ref{sec8} we restrict to the special case when the quiver is of Dynkin type and prove the ``Cluster Tilting Triangulation is given by Supports of Semi-invariants'' Theorem \ref{TTDSI}.

The authors thank the referee whose comments significantly improved the exposition of this paper.

\section{Recall: representation spaces}\label{sec1}
This section reviews basic notions related to quiver representations and semi-invariants, and states some of the well known results about semi-invariants from \cite{[S2]}, \cite{[DW1]}.

\subsection{ Quiver representations}\label{Qr}
Let $\k$ be an algebraically closed field.
A quiver $Q$ is a directed graph; denote its set of vertices by $Q_0$ and its set of arrows by $Q_1$. The path algebra, $\k Q$, is the $\k$ algebra generated by the paths in $Q$,  where the product is given by composition of paths, where $\alpha \cdot \beta$ means first traverse the path $\beta$ followed by $\alpha.$  For a given vertex $v \in Q_0,$ the idempotent $e_v$ is the constant path at $v$.  Note that $\k Qe_v$ is the left $\k Q$-module of paths starting at $v$, and similarly for the right $\k Q$ module, $e_u\k Q$.  The algebra $\k Q$ is easily seen to be hereditary. We assume $Q$ is finite and has no oriented cycles, so its path algebra $\k Q$ is finite dimensional.  The category of finitely generated $\k Q$-modules is equivalent to the category of finite dimensional $Q$-representations over $\k$. 

Recall that a {\em quiver representation} is a family of vector spaces $\{M_v\}_{v \in Q_0}$, and linear maps $M_a : M_{ta} \to M_{ha}$ for each arrow $a \in Q_1$, where $ta$ and $ha$ denote the tail and head of $a$, respectively. A map $ f = (f_{v}) $ between two representations $M$ and $M'$ consists of $\k$-linear maps $f_v :M_v\to M_v'$ satisfying commutativity relations: $f_{ha} M_a = M_a' f_{ta}$ for all arrows $a \in Q_1$. (We will sometimes refer to these representations as ``modules'', in order not to confuse them with the generalized or virtual representations, which we will introduce later in \ref{PresSp}, \ref{VirRep}). 
The {\em dimension vector} of $M$ is the vector $\ul\dim M: = (\dim M_v) \in \NN^{n}$ where $n = card(Q_0)$. The {\em radical} of a representation $M$ is $radM$ where $rad \subset \k Q$ is the ideal generated by all arrows, i.e., elements of $Q_1$. 

For a vertex $v$ we denote by $S(v)$ the {\em simple representation} supported at $v$, i.e. $S(v)_v = \k$ and $S(v)_u = 0$ for all $u\ne v$. We also denote by $P(v)$ the {\em canonical indecomposable projective} which maps onto $S(v)$: 
$$P(v)_v=e_v\k,\ \ P(v)_u  = e_u\k Qe_v\otimes_\k P(v)_v\cong \coprod_{\text{paths}\ v\to u}P(v)_v,$$ that is,  the free $\k$-vector space with basis \{all paths from $v$ to $u$\}, and for each arrow $a$, the linear maps $P(v)_a: P(v)_{ta}\to P(v)_{ha}$ are defined on the generating paths by $P(v)_a(p):=ap$. 
Similarly, we denote by $I(v)$ the \emph{canonical indecomposable injective} given by $I(v)_v=\k$, $I(v)_u$ is the free $\k$-vector space with basis \{all paths from $u$ to $v$\}.
More precisely, it is the dual of this vector space, with the dual basis:
\[
	I(v)_u=\Hom_\k(e_v\k Qe_u,I(v)_v)\cong \prod_{\text{paths}\ u\to v}I(v)_v.
\]
An arrow $a:ta\to ha$ gives a linear map
$e_{v}\k Q e_{ha}\to e_{v}\k Qe_{ta}
$
which induces a map $I(v)_{ta}\to I(v)_{ha}$.

Every simple representation is isomorphic to one of the $S(v)$, indecomposable projective to one of the $P(v)$, and indecomposable injective to one of the $I(v)$ (see \cite{[ASS]}, section III.2).

We will use the following notation for projective representations having the prescribed number of indecomposable summands: let $\g \in \NN^{n}$; we denote by $P(\g)$ the following projective representation: $$P(\g)=\coprod_{v\in Q_0} P(v)^{\g_v}.$$
Notice that with this notation, we have $P(\underline{\dim} \,S(v))=P(v)$.

\subsection{ Representation space for $\alpha \in \NN^n$}\label{RepSp}

The representation space for a non-negative integral vector $\a=(\a_v)_{v\in Q_0}$ is the affine space:
$$
R(\a )=\prod_{(u\to v)\in Q_1} \Hom_{\k} (\k^{\a_u}, \k^{ \a_v}).
$$
Elements of $R(\a)$ will be called based representations with {\em dimension vector}
$\a$. Every element of $R(\a)$ can be viewed as a collection of $\a_v \times \a_u$-matrices, one for each arrow $a:$ $u \to v$. The group:
$$
G(\a )=\prod_{v\in Q_0}Gl_{\a_v} (\k)
$$ 
acts on $R(\a)$, by $ g M_a:= (g_{ha})M_a (g_{ta})^{-1}$\label{Gacts}. We use the convention that $Gl_0(\k)$ is the trivial group. Two representations of dimension $\a$ are isomorphic if and only if they lie in the same orbit of the action of $G(\a)$.

\subsection{ Euler matrix and bilinear form}\label{EF}
The vertices of the quiver $Q$ are partially ordered by setting $u<v$ if there is a directed path from $u$ to $v$. Choose a fixed extension of this partial ordering to a total ordering, 
also denoted by $<$.
The {\em Euler matrix}
$E$ is defined as the $n\times n$ matrix with rows and columns labeled by $Q_0$ (order as above), with the diagonal entries equal to 1 and the entry $E_{u,v}=-$(the number of arrows from $u$ to $v$) for $u\ne v$. 
\noindent The {\em Euler form} is the non-symmetric bilinear form on $\ZZ^n$ given by the matrix $E$:
$$
\langle \a ,\b\rangle:=\a^tE\b
.$$

\begin{eg}\label{Example} Quiver $Q$ and the corresponding Euler matrix. The rows and columns of the inverse and transpose of the Euler matrix have interpretations as dimension vectors of projective modules, which is discussed in the next remark \ref{fact}. We use this example to illustrate this.
\begin{figure}[htbp]
\begin{center}
{
\setlength{\unitlength}{.6in}
{\mbox{
\begin{picture}(4,1)
     \thicklines
      \qbezier(2.2,.6)(2.5,.8)(2.9,.6)
      \qbezier(2.2,.48)(2.5,.28)(2.9,.48)
      \qbezier(1.2,.55)(1.5,.55)(1.9,.55)
      \qbezier(1.75,.62)(1.8,.55)(1.9,.55)
      \qbezier(1.75,.48)(1.8,.55)(1.9,.55)
      \qbezier(2.75,.58)(2.8,.63)(2.9,.6)
      \qbezier(2.8,.72)(2.8,.66)(2.9,.6)
      \qbezier(2.8,.36)(2.8,.42)(2.9,.48)
      \qbezier(2.75,.49)(2.8,.45)(2.9,.48)
      \put(1,.5){$\bullet$}
      \put(1,.2){1}
      \put(2,.5){$\bullet$}
      \put(2,.2){2}
      \put(3,.5){$\bullet$}
      \put(3,.2){3}
\end{picture}}
}}
\end{center}
\end{figure}
 \vspace{-.2cm}
\[
E =
\left[
\begin{array}{ccc}
  1   &-1  & 0\\
  0& 1 & -2 \\
  0&  0 & 1  
\end{array}
\right],
\  E^{-1} =
\left[
\begin{array}{ccc}
  1   & 1  & 2\\
  0& 1 & 2 \\
  0&  0 & 1  
\end{array}
\right],
\  (E^t)^{-1} =
\left[
\begin{array}{ccc}
  1   &0 & 0\\
 1& 1 & 0 \\
 2&  2 & 1  
\end{array}
\right].
\]
\end{eg}
 \vspace{.1cm}
\begin{rem}[Useful facts about Euler form and Euler matrix]\label{fact}
(\cite{[ASS]}, sec. III.3).
\begin{enumerate}
\item 
$\langle \a,\a' \rangle =\dim_{\k}\Hom_{Q}(M,M') -\dim_{\k}\Ext^1_{Q}(M,M')$ for all representations $M$ and $M'$ such that $\a =\underline{\dim}\,M$ and  $\a' =\underline{\dim}\,M'$.
\item The row of $E$ corresponding to the vertex $v$, consists of coefficients of the dimension vector $\ul\dim\, S(v)$ written as linear combination of dimension vectors $\ul\dim\, P(w)$. In the example above, we have 
$\ul\dim\, S(2) = \ul\dim\, P(2) - 2\ul\dim\, P(3).$
\item $E^t \ul\dim \,P(v) = \ul\dim \,S(v)$. Equivalently, 
$(E^t)^{-1} \ul\dim\, S(v) = \ul\dim\, P(v)$. In other words, $\ul\dim\, P(v)$ is the column of $(E^t)^{-1}$ corresponding to vertex $v$.
\item The product $E^t \a$ gives the coefficients of a vector  $\a$ written as a linear combination of the vectors $\underline{\dim}\,P(w)$. 
In particular, $E^t \ul\dim\, P(\g) = \g$ for any $\g \in \NN^n$.
\item  
$(E^t)^{-1}(\g) = \ul\dim\, P(\g)$ for all $\g \in \NN ^n$.  In particular, for $\g \in \NN^n$,   $(E^t)^{-1}(\g) \in \NN^n$.
\item $\a - E^t(\a)$  has non-negative coefficients for $\a\in\NN^n$, e.g. if $\alpha = \underline{\dim}\,S(v)$ then 
$\a - E^t(\a)= \underline\dim\,(radP(v)/rad^2P(v))$. 
\item The dimension vectors $\ul\dim\, I(v)$ of the indecomposable injective vectors occupy the columns of $E^{-1}$. In particular, the entries of $E^{-1}$ are all non-negative.
\end{enumerate}
\end{rem}

\subsection{ Canonical projective presentations}\label{CPP}
Recall, the {\em canonical projective presentation} of a representation $M$, with $\underline{\dim}M = \a$ is:
\[0\to P_1\xrarrow{p_{M}} P_0\to M\to0\]
$$\text{where}\quad P_1=\coprod_{(u\to v)\in Q_1}P(v)^{\a_u} \ \ \ \text{ and } \ \ \ \  P_0=P(\a)=\coprod_{v\in Q_0}{P(v)^{\a_v}}.$$ 
The mapping $p_{M}$ can be described as follows: For each arrow $a=(u\to v)$ $ \in Q_1$, the restriction of $p_M$ to $P(v)^{\alpha_u}$ is given by:
$$P(v)^{\alpha_u}\xrarrow{((- incl_a)^{\alpha_u},M_a)} P(u)^{\alpha_u}\smallcoprod P(v)^{\alpha_v},$$
where $M_a: M_{ta}\to M_{ha}$ is the linear map associated to the arrow $a$ in the definition of the representation $M$ (as in \ref{Qr}), and $incl_a: P(v) \to P(u)$ is the inclusion map corresponding to the arrow $a: u\to v$. The representation $P_1$ can also be rewritten as:
$$
P_1=\coprod_{(u\to v)\in Q_1}P(v)^{\a_u} =\coprod_{v\in Q_0} P(v)^{(\sum_{u\to v}\a_u)}=P(\a-E^t\a).
$$ 
 Consequently, $p_M \in \Hom_Q(P(\a-E^t\a),P(\a))$.
 
One constructs the canonical injective resolution similarly.


\section{Recall: classical results on semi-invariants of quivers}\label{sec2}
We recall now the notion of semi-invariants of a group acting on a variety and state the classical results about semi-invariants on the representation spaces of quivers by Kac, Schofield and Derksen-Weyman. 

\subsection{ Definition of semi-invariants}\label{DefSI}
For an algebraic group $G$ acting on a variety $X$, an element $f$ of the coordinate ring of $X$ is called a {\em semi-invariant}, if there exists a character  $\chi $ of $G$ such that for all $g\in G$ and all $v\in X$:
$$
f(g\cdot v) = \chi(g)f (v).
$$
We will refer to $\chi$ as \emph {the character of the semi-invariant} $f$.
\begin{rem}
The rational characters (characters which are rational functions) on $Gl_n(\k)$ are $\det(g)^s$ where $s\in\ZZ$, providing that $\k$ has at least 3 elements and $n\geq1$.
\end{rem}

\subsection{ Semi-invariants of quivers}\label{SIQ} Let $Q$ be a quiver with $n$ vertices and let $\a =(\a_1,\dots,\a_n) \in \NN^n$.
The group $G(\a)$ acts on the representation space $R(\a)$ (as described in \ref{RepSp}). 
Since the group $G(\a)$ is the product of general linear groups, the character $\chi$ at $g$ is the product $\chi(g)=(\det(g_1))^{\s_1}\dots (\det(g_n))^{\s_n}$, where $\s =(\s_1,\dots,\s_n)\in \ZZ^n$.  The following are some basic facts about semi-invariants, their characters and weights.

\begin{defn}Let $Q$ be a quiver, $n=|Q_0|$ and $\a\in\NN^n$.
Let $f$ be a semi-invariant on $R(\a)$ and $\chi$ the uniquely determined character of $f$.
\emph{A weight of the semi-invariant} $f$ is any  vector $\sigma= (\sigma_1, \dots ,\sigma_n)$, for which the character $\chi$ for $f$ can be written as 
$$\chi(g) = (\det(g_1))^{\s_1}\dots (\det(g_n))^{\s_n}.$$
Furthermore, a vector $\s\in\ZZ^n$ will be called a \emph{weight} if it is a weight for some semi-invariant.
\end{defn}

\begin{rem}  Let $\a\in\NN^n$ be fixed.
\begin{enumerate}
\item Each vector $\s\in \ZZ^n$ determines a unique character, which we denote by $\chi_{\s}$.  
\item On the other hand, a character $\chi$ for some semi-invariant, might not uniquely determine the weight of the semi-invariant. 
This happens in the important non-sincere case: if $\alpha_i = 0$, then $g_i$ is a $0\times0$ matrix, in which case $\det(g_i) = 1$, therefore for any $\s_i \in \mathbb Z$, $\det(g_i)^{\s_i} = \det(g_i) = 1$.
\item A character $\chi$ for a semi-invariant $f$ determines uniquely a coset of a free abelian subgroup of $\ZZ^n$. The coset consists of all weights of $f$,\\
 $\Sigma_{\chi}=\Sigma_{f} =\{\s\ |\ \s \text{ is a weight of }f \}.$
\end{enumerate}
\end{rem}

\begin{defn}\label{ConeW} Let $\a\in\NN^n$. Define, as in \cite{[DW1]} \emph{the cone of weights} as:
 $$\Sigma(Q,\a):=\{\s\ |\ \s \text{ is a weight of some semi-invariant on } R(\a)\}.$$
\end{defn}

We now define rings of semi-invariants as graded rings, where the grading is given by the characters of the group $G(\a)$.
\begin{defn}
For $\alpha\in \NN^n$, we denote by $ SI(Q,\a )_{\chi}$ the $\k$-vector space of semi-invariants on $R(\a)$ with the character $\chi$ and by $SI(Q,\a )$ the graded ring
$$
SI(Q,\a )=\bigoplus_{\chi\in \text{Char} G(\a)} SI(Q,\a )_{\chi},
$$
called the {\em ring of semi-invariants} for the action of $G(\alpha)$ on $R(\alpha)$. 
\end{defn}

\begin{rem}\label{conventions} We point out the following facts and conventions:
\begin{enumerate}
\item  Our convention differs from \cite{[DW1]} in that the weights are negated.
\item The polynomial $0$ appears as a semi-invariant in each grading, i.e. of all possible weights.
\end{enumerate}
\end{rem}

\subsection{ Fundamental theorems for semi-invariants of quivers}\label{fundthm} The \emph{First Fundamental Theorem} (FFT) states that rings of semi-invariants of quivers are spanned by determinants.  The \emph{Saturation Theorem} describes all nonnegative vectors with semi-invariants of a given weight. The third theorem, the \emph{Generic Decomposition Theorem} (Kac terminology), describes the decomposition of a general representation of any non-negative integral vector  $\a \in \NN^n$.

We recall the definitions of general representations and also of the fundamentally important polynomial functions $c_V$.

\begin{defn} \label{GenRep} Let $Q$ be a quiver and $\k$ a field. 
A \emph{generic representation} of $Q$ of dimension $\a$ is a representation over a transcendental extension (i.e., field of rational functions) of $\k$ with one variable for each entry of the matrix representation of the representation.  Alternatively stated, \emph{a general representation} is a representation from some nonempty Zariski open set in $R(\alpha )$.
\end{defn}

Every semi-invariant vanishing on a generic (or general representation) is identically zero.

\begin{defn}
Let $Q$ be a quiver and let $\a\in\NN^n$.  For a representation $V$ of $Q$, define 
$
(p_M, V) : \Hom{(P_0, V)} \to \Hom{(P_1, V)}
$
to be the vector space homomorphism induced by the canonical presentation $p_M$ of $M \in R(\alpha)$, as defined in \ref{CPP}.   Given $V$ such the $(p_M, V)$ is square (that is, by Useful fact~\ref{fact}.2, such that $\<\a,\ul {\dim} V\>=\a^t E\ul {\dim}V=0$), define the polynomial function $c_V \in \k [R(\a  )]$ by setting
$$
c_V (M):= \det(p_M,V).
$$
\end{defn}
Note that a decomposition $V=V_1\smallcoprod V_2$ gives a factorization of $c_V(M)$ as $c_V(M) = c_{V_1}(M) c_{V_2}(M)$, providing $\<\a,\ul {\dim} V_1\>=\<\a,\ul {\dim} V_2\>=0.$

\begin{thm}[FFT,\cite{[S2],[DW1]}, see Remark~\ref{conventions}]\label{thm: FFT}
Let $Q$ be a quiver and $\a \in \NN^n$. Then the ring of semi-invariants $SI(Q,\a )$ is spanned as a $\k$-vector space by the functions
$c_V$ for representations $V$ satisfying $\<\a,\ul {\dim} V\>=\a^t E\ul {\dim}V=0$. Furthermore, the character of the semi-invariant $c_V$ is $\chi_{\sigma}$ where $\sigma = E \ul{\dim}V$.
\end{thm}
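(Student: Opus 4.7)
The plan has two parts: (a) verify each $c_V$ is a semi-invariant of weight $\sigma = E\ul\dim V$, and (b) prove these span $SI(Q,\alpha)$.

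For (a), using $P_0 = P(\alpha)$ and $P_1 = P(\alpha - E^t\alpha)$ from Section~\ref{CPP}, I first compute
\[
\dim\Hom(P_0,V)-\dim\Hom(P_1,V) = \alpha\cdot\ul\dim V - (\alpha-E^t\alpha)\cdot\ul\dim V = \<\alpha,\ul\dim V\>,
\]
so $(p_M,V)$ is square precisely when the orthogonality hypothesis holds, and $c_V$ is well-defined. For $g\in G(\alpha)$, the presentation $p_{g\cdot M}$ is obtained from $p_M$ by conjugation by the natural $g$-actions on $P_0$ and $P_1$. Applying $\Hom(-,V)$ yields $(p_{g\cdot M},V) = \iota_1 \circ (p_M,V) \circ \iota_0$, where $\iota_0,\iota_1$ are block-diagonal automorphisms whose $v$-blocks scale the $V_v$-factors by $g_v$. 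Combining the two determinantal contributions---using that the number of arrows from $u$ to $v$ equals $-E_{uv}$ for $u\ne v$---yields $c_V(g\cdot M) = \prod_v \det(g_v)^{(E\ul\dim V)_v}\, c_V(M)$, identifying the character as $\chi_{E\ul\dim V}$.

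For (b), I would follow the Derksen--Weyman approach via Cauchy's formula. Expressing $\k[R(\alpha)]$ as a tensor product over arrows of symmetric algebras $\mathrm{Sym}(\k^{\alpha_u} \otimes (\k^{\alpha_v})^*)$ and applying
\[
\mathrm{Sym}(U\otimes W^*) \;=\; \bigoplus_\lambda S_\lambda(U) \otimes S_\lambda(W)^*,
\]
the coordinate ring decomposes into $G(\alpha)$-isotypic components indexed by tuples of partitions. The weight-$\sigma$ semi-invariants are exactly those components whose $Gl(\alpha_v)$-type at each vertex $v$ is a power of the determinant, so only rectangular Young diagrams of height $\alpha_v$ contribute at vertex $v$.

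The main technical obstacle is to realize every such rectangular-weight isotypic piece as a linear combination of $c_V$'s. Here I would invoke the classical First Fundamental Theorem for $Gl_n$-invariants: every $Gl$-equivariant polynomial map among tensor products arises from contractions. The construction $V \mapsto c_V$ realizes precisely the contractions indexed by representations $V$ with $E\ul\dim V = \sigma$; a Littlewood--Richardson type combinatorial argument then shows that, as $V$ varies over all such representations, the resulting $c_V$'s span the rectangular-weight isotypic component. This surjectivity step is the heart of the theorem and is carried out in detail in~\cite{[DW1]}.
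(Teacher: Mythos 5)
This theorem appears in Section~\ref{sec2} of the paper as a \emph{recalled} classical result; the paper does not prove it but cites [S2] and [DW1], so there is no in-paper proof to compare against. Judging your write-up on its own terms: part (a) is correct and complete, and in fact it is exactly the composite of two computations the paper carries out later in greater generality---Proposition~\ref{weightprop}, which shows $C_V$ has combined weight $(\ul\dim V,\ul\dim V)$ on the presentation space, and Proposition~\ref{wrelation}, which shows that precomposing with $\zeta$ converts a weight $\sigma$ to $E\sigma$; together these give $c_V = C_V\circ\zeta$ has weight $E\ul\dim V$. One small imprecision: the automorphism induced on $\Hom(P_1,V)$ is not ``block-scaling the $V_v$-factors by $g_v$.'' Since $P_1 = \coprod_{u\to v} P(v)^{\alpha_u}$, the summand $\Hom(P(v)^{\alpha_u},V)\cong V_v^{\alpha_u}$ is acted on by $g_u$, not $g_v$; after merging the two determinantal contributions one still lands on the exponent $(E\ul\dim V)_v$, so your final formula is right even though the intermediate description is off.

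Part (b), however, is not a proof but a (correct) roadmap of the Derksen--Weyman strategy. The reduction via Cauchy's formula to rectangular-weight isotypic components is fine, but you explicitly defer the essential surjectivity step---that the $c_V$'s span each such component---to [DW1]. That is precisely the hard content of the theorem, and it is exactly what [DW1] (and, via a different, characteristic-zero argument, [S2]) supply. Since the paper itself simply cites these references without reproducing the argument, your proposal is consistent with the paper's treatment, but it should be presented as an outline pointing to the literature rather than as a self-contained proof.
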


By simply restricting to the support of $\a$ we get the following.

\begin{cor}\label{cor to FFT}
$SI(Q,\a)$ is spanned by those $c_V$ as above where $V_v=0$ whenever $\a_v=0$. 
\end{cor}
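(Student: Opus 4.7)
The plan is to reduce the statement to Theorem~\ref{thm: FFT} applied to the full subquiver of $Q$ supported on the vertices where $\a$ is nonzero. Let $Q'$ be the full subquiver of $Q$ with vertex set $Q'_0=\{v\in Q_0:\a_v>0\}$, and let $\a'$ denote the restriction of $\a$ to $Q'_0$. Any arrow of $Q$ incident to a vertex $v$ with $\a_v=0$ contributes a trivial factor to $R(Q,\a)$, since one of $\Hom_\k(0,-)$ or $\Hom_\k(-,0)$ is a single point; the remaining arrows are precisely those of $Q'_1$. Therefore the forgetful map gives an isomorphism of affine varieties $R(Q,\a)\cong R(Q',\a')$. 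Moreover, since $Gl_0(\k)$ is trivial, $G(Q,\a)=G(Q',\a')$ as groups, and the two actions are intertwined. Consequently $SI(Q,\a)\cong SI(Q',\a')$ as graded rings, where character components at vertices outside $Q'_0$ are trivial and hence irrelevant.

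By Theorem~\ref{thm: FFT} applied to $Q'$, the ring $SI(Q',\a')$ is spanned as a $\k$-vector space by the polynomials $c_{V'}$, where $V'$ ranges over representations of $Q'$ with $\<\a',\ul{\dim}\,V'\>_{Q'}=0$. Identify each such $V'$ with the representation $V$ of $Q$ defined by $V_v=V'_v$ for $v\in Q'_0$ and $V_v=0$ otherwise (so any arrow map of $V$ is zero once the arrow meets a vertex outside $Q'_0$). Since $Q'$ is a full subquiver, $E_{Q'}$ is the principal submatrix of $E_Q$ on $Q'_0$, and combined with the vanishing of $\a$ and $\ul{\dim}\,V$ outside $Q'_0$ this yields $\<\a,\ul{\dim}\,V\>_Q=\<\a',\ul{\dim}\,V'\>_{Q'}=0$, so $c_V$ is defined on $R(Q,\a)$.

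The main verification, and the one place demanding care, is that under the isomorphism $R(Q,\a)\cong R(Q',\a')$ the polynomial $c_V$ coincides with $c_{V'}$. For $M\in R(Q,\a)$ with canonical presentation $P_1\to P_0$ as in \ref{CPP}, both $\Hom_Q(P_0,V)$ and $\Hom_Q(P_1,V)$ decompose as products indexed by the vertices of $Q$, using $\Hom_Q(P(v),V)=V_v$. All factors indexed by $v\notin Q'_0$ vanish---those in $\Hom_Q(P_0,V)$ because $\a_v=0$, and those in $\Hom_Q(P_1,V)$ because $V_v=0$. Hence both source and target are products over $Q'_0$, and the induced square matrix is exactly the one computing $c_{V'}(M')$, where $M'$ is the image of $M$ in $R(Q',\a')$; the summands of $P_0$ and $P_1$ coming from $Q'_0$ realize the canonical presentation of $M'$ as a representation of $Q'$.

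The only subtle bookkeeping is to check that the trivial factors on the source and target really do align, i.e.\ that dropping the $v\notin Q'_0$ rows and columns still produces a square matrix. This is guaranteed precisely by the condition $V_v=0$ whenever $\a_v=0$ (so one does not accidentally lose rows without losing the matching columns), which is why the restriction in the corollary is formulated exactly this way. Once $c_V=c_{V'}$ is established, Theorem~\ref{thm: FFT} for $Q'$ together with the identifications $R(Q,\a)\cong R(Q',\a')$ and $SI(Q,\a)\cong SI(Q',\a')$ immediately yields the desired spanning statement.
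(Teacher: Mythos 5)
Your proof is correct and is essentially the argument the paper has in mind: the paper gives only the one-line hint ``By simply restricting to the support of $\a$,'' and you have carried out exactly that restriction to the full subquiver $Q'$, identified $R(Q,\a)\cong R(Q',\a')$ and $G(Q,\a)\cong G(Q',\a')$, applied Theorem~\ref{thm: FFT} on $Q'$, and checked that $c_V=c_{V'}$ under the identifications. One small caveat on phrasing: the summands of $P_0,P_1$ indexed by $Q'_0$ are projective $\k Q$-modules, and in general $P_Q(v)$ does not restrict to $P_{Q'}(v)$ (paths from $v$ may leave $Q'$ and re-enter it), so they do not literally ``realize the canonical presentation of $M'$ over $Q'$''; this is harmless, though, since your matrix comparison really only uses that $\Hom_Q(P_Q(v),V)=V_v=\Hom_{Q'}(P_{Q'}(v),V')$ for $v\in Q'_0$ together with the agreement of the structure maps along arrows of $Q'$, which you have verified.
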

%

\begin{defn}\label{Supp} Let $\s\in\ZZ^n$. The $\NN$-{\em support} of  $\s$ is defined as 
$$
supp_{\NN}(\s):= \{\a\in\NN^n\ |\ SI(Q,\a )_{\chi_{\sigma}} \ne 0\}.
$$
\end{defn}
\begin{rem}  It follows that $\a \in supp_{\NN}(\s)$ if and only if $\s \in \Sigma (Q,\a)$, as in the reciprocity theorem of \cite{[DW1]}, or definition \ref{ConeW}.
\end{rem}

The next theorem is clearly equivalent to the Saturation Lemma in \cite{[DW1]} in view of the Reciprocity Property of that paper. In order to state the theorem we need to define the sets $D(\b)$ which will be used throughout the paper.

\begin{defn} \label{DefDb} Let $\b\in\NN^n$. Define the subset $D(\b)\subset \RR^n$ as
$$
D(\b):=\{\a\in\RR^n\st \<\a,\b\>=0\}\cap(\cap_{\b'\into\b}\{\a\in\RR^n\st \<\a,\b'\>\leq0\});$$
here $\b'\into\b$ means that the general representation of dimension $\b$ has a subrepresentation of dimension $\b'$.

\end{defn}

\begin{thm}[Saturation, \cite{[DW1]}]\label{satthm}
Let $\b\in\NN^n$. Then: 
$$
supp_{\NN}(E\b)=\NN^n\cap D(\b).
$$
\end{thm}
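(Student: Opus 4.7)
The plan is to deduce the theorem from the First Fundamental Theorem~\ref{thm: FFT} combined with Schofield's criterion for generic $\Hom$-vanishing; as the authors indicate, this amounts to a repackaging of the Saturation Lemma of \cite{[DW1]} via reciprocity.

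First I would unfold the definitions: $\a\in supp_{\NN}(E\b)$ means exactly $SI(Q,\a)_{\chi_{E\b}}\neq 0$. By Theorem~\ref{thm: FFT} and the invertibility of $E$ over $\QQ$, every semi-invariant of weight $E\b$ is a $\k$-linear combination of determinants $c_V$ with $\ul\dim V=\b$ (which forces $\langle\a,\b\rangle=0$ so that the map $(p_M,V)$ is square). Thus
\[
\a\in supp_{\NN}(E\b)\iff \exists\, V\in R(\b)\ \text{with}\ c_V\not\equiv 0\ \text{on}\ R(\a),
\]
and by the definition of $c_V$ this is equivalent to $\Hom_Q(M,V)=0=\Ext^1_Q(M,V)$ for a generic $M\in R(\a)$.

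For the forward inclusion, suppose such a $V$ exists. The locus $\{(M,V) : c_V(M)\neq 0\}$ is a nonempty Zariski open subset of $R(\a)\times R(\b)$, so we may further take $V$ to be a general representation of dimension $\b$. For any $\b'\into\b$, such a $V$ admits a subrepresentation $V'$ of dimension $\b'$; applying $\Hom(M,-)$ to $V'\into V$ yields an injection $\Hom(M,V')\into\Hom(M,V)=0$, whence $\langle\a,\b'\rangle=-\dim\Ext^1(M,V')\leq 0$. Combined with $\langle\a,\b\rangle=0$, this places $\a$ in $D(\b)$.

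For the reverse inclusion, assume $\a\in\NN^n\cap D(\b)$ and take $M,V$ to be general of dimensions $\a,\b$ respectively. Schofield's theorem on generic morphisms asserts that $\Hom(M,V)=0$ if and only if $\langle\a,\b'\rangle\leq 0$ for every $\b'\into\b$; the hypothesis supplies precisely these inequalities, so $\Hom(M,V)=0$, and then $\Ext^1(M,V)=0$ follows from $\langle\a,\b\rangle=0$. Therefore $c_V(M)\neq 0$ generically, exhibiting $c_V$ as a nonzero element of $SI(Q,\a)_{\chi_{E\b}}$. The main obstacle is this Schofield criterion in the reverse direction, which is a substantial theorem proved by an induction on representation varieties and is imported from the literature rather than reproved here; given it, the remainder of the argument is a straightforward FFT-translation between weights of the form $E\b$ on $R(\a)$ and dimension vectors $\b$ of the test representations $V$.
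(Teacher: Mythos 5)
The paper does not actually prove this theorem: it cites it, remarking that it ``is clearly equivalent to the Saturation Lemma in \cite{[DW1]} in view of the Reciprocity Property of that paper.'' So there is no internal proof to compare against; you are supplying an argument where the paper simply imports a known result. Your outline is essentially the standard route through the First Fundamental Theorem plus Schofield's generic-subrepresentation machinery, which is also how \cite{[DW1]} proceeds, so the \emph{approach} is right. One small point of attribution: the theorem you invoke as ``Schofield's criterion for generic $\Hom$-vanishing'' is more precisely his formula $\ext_Q(\a,\b)=\max\{-\langle\a,\b''\rangle : \b\onto\b''\}$; the $\Hom$-vanishing statement you use is a corollary of this (via $\hom_Q(\a,\b)=\langle\a,\b\rangle+\ext_Q(\a,\b)$ and the identity $H(\b)\cap H_-(\b')=H(\b)\cap H_+(\b'')$ recorded in the paper), and it is cleaner to say so since the reverse inclusion is where the real content lies.

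There is one genuine gap in the forward inclusion. You assert that, by FFT and invertibility of $E$, every semi-invariant of character $\chi_{E\b}$ on $R(\a)$ is a linear combination of $c_V$ with $\ul\dim V=\b$. That is not what the FFT gives when $\a$ is not sincere: FFT produces $c_V$ whose \emph{character} $\chi_{E\ul\dim V}$ equals $\chi_{E\b}$ on $G(\a)$, and since $\det(g_v)=1$ whenever $\a_v=0$, this only pins down $(E\ul\dim V)_v=(E\b)_v$ on the support of $\a$. (This is exactly the indeterminacy the paper flags in Remarks in Sections~\ref{SIQ} and \ref{Indeterm}.) So from $c_V\neq0$ you obtain $\a\in D(\ul\dim V)$, not directly $\a\in D(\b)$. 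One still gets $\langle\a,\b\rangle=\langle\a,\ul\dim V\rangle=0$ for free, but the inequalities $\langle\a,\b'\rangle\leq0$ for $\b'\into\b$ do not follow automatically from the corresponding inequalities for subdimensions of $\ul\dim V$. To close the gap you need either to reduce to the support subquiver of $\a$ (where $\a$ is sincere and $E\b$ does determine $\b$), or to run an injective-adjustment argument of the kind the paper itself uses in the proof of Theorem~\ref{c_Vspangen}, replacing $V$ by $V\smallcoprod I(\g)$ for a suitable $\g$ with support disjoint from $\a$.
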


{Before stating the Generic Decomposition Theorem recall the definition of Schur root, and also of $hom$ and $ext$ on vectors in $\NN^n$.

\begin{defn} \label{Schur} Let $Q$ be a quiver and $\a\in\NN^n$. Then $\a$ is called a \emph{Schur root} if the general representation in $R(\a)$ is indecomposable. 
\end{defn}

\begin{defn}
Let $Q$ be a quiver and $\a,\b\in\NN^n$. Define: 
$$
hom_Q(\a,\b):=\min\{\dim_{\k}\Hom_Q (A,B)|\  \ul{\dim}A = \a, \ul{\dim}B=\b\}.
$$
$$
ext_Q(\a,\b):=\min\{\dim_{\k}\Ext_Q (A,B)|\  \ul{\dim}A = \a, \ul{\dim}B=\b\}.
$$
\end{defn}
Since $\dim_\k\Hom$ and $\dim_\k\Ext$ are upper semicontinuous and $\k$ algebraically closed, these minima are attained for general modules of these dimension vectors. So, this definition agrees with the usual definition, i.e. $hom_Q (\a,\b) = \dim_{\k}\Hom_Q(A,B)$ and $ext_Q (\a,\b) = \dim_{\k}\Ext_Q(A,B)$, where $A,B$ are general representations with $\ul{\dim}A = \a$ and $\ul{\dim}B = \b$.

\begin{thm}[Generic Decomposition,\cite{[DW2]}]\label{classical generic decomp} Any $\a \in \NN^n$ has a unique decomposition of the form
$\a = \Sigma \a_i$
where $ext_Q (\a_i,\a_j)=0$ for all $i\ne j$ and each $\a_i$ is a Schur root. Furthermore, the general representation $M$ with $\ul{\dim} M = \a$ decomposes as $M\cong \coprod M_i$ with $\ul\dim M_i=\a_i$ where $M_i$ are indecomposable representations so that $\Ext_Q(M_i,M_j)=0$ for all $i\neq j$.
\end{thm}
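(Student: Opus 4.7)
The plan is to build the decomposition from the Krull--Schmidt decomposition of a general representation of dimension $\a$, and then verify the required properties via upper semicontinuity of $\dim\Hom$ and $\dim\End$ on the representation spaces, equivalently via the theory of orbit degenerations for the $G(\a)$-action on $R(\a)$.

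First I would choose a general $M\in R(\a)$, i.e., a module lying in the dense open subset where $\dim\End$ attains its minimum, or equivalently where the $G(\a)$-orbit has maximal dimension. By Krull--Schmidt, $M\cong\coprod_{i=1}^{s}M_i$ with each $M_i$ indecomposable; set $\a_i:=\ul\dim M_i$. To see that each $\a_i$ is a Schur root, suppose some $M_i$ were not general in $R(\a_i)$. Replacing it by a general $N_i\in R(\a_i)$, the upper semicontinuity of $\dim\Hom$ and $\dim\End$ gives
\[
	\dim\End\Bigl(\coprod N_i\Bigr) = \sum_{i}\dim\End(N_i)+\sum_{i\ne j}\dim\Hom(N_i,N_j) < \dim\End(M),
\]
contradicting the minimality of $\dim\End(M)$. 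Hence each indecomposable $M_i$ is itself general in $R(\a_i)$, and so $\a_i$ is a Schur root by Definition~\ref{Schur}.

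Next I would verify $ext_Q(\a_i,\a_j)=0$ for $i\ne j$. If $\Ext_Q(M_i,M_j)\ne 0$, then a non-split extension $0\to M_j\to E\to M_i\to 0$ exhibits $M_i\coprod M_j$ as a proper degeneration of $E$ via the one-parameter family scaling the extension class. The basic degeneration inequality then yields $\dim\End(E)<\dim\End(M_i\coprod M_j)$, so that replacing $M_i\coprod M_j$ in $M$ by $E$ produces a module of dimension $\a$ with strictly smaller endomorphism algebra, again contradicting the minimality of $\dim\End(M)$.

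For uniqueness, given another decomposition $\a=\sum\a'_k$ with each $\a'_k$ a Schur root and all pairwise $\Ext$ vanishing, form $N':=\coprod N'_k$ with $N'_k$ a general indecomposable of dimension $\a'_k$; the pairwise vanishing of $\Ext$ together with the Euler form identity from Remark~\ref{fact} forces $\dim\End(N')$ to attain the generic minimum on $R(\a)$, so $N'$ is general. Then $M$ and $N'$ lie in the same $G(\a)$-orbit, and Krull--Schmidt identifies their summands, yielding an equality of multisets $\{\a_i\}=\{\a'_k\}$. The main obstacle is the degeneration inequality used in the $\Ext$-vanishing step: rather than invoking the full Riedtmann--Zwara characterization of degenerations, I expect one can derive it directly from the long exact $\Hom$-sequences induced by the non-split extension, combined with the fact that $\dim\mathrm{Aut}(M)=\dim\End(M)$ so that orbit dimensions translate into endomorphism-algebra dimensions.
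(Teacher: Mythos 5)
The paper does not prove this theorem; it is stated in \S 2 (which is explicitly background) and cited to Derksen--Weyman \cite{[DW2]}, so there is no ``paper's own proof'' to compare against. Evaluating your sketch on its own terms, it follows the classical Kac--Schofield strategy (general module, Krull--Schmidt, semicontinuity, degenerations), but two steps have genuine gaps.

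\textbf{The Schur-root step.} Your argument establishes only that each $M_i$ attains the \emph{minimal} value of $\dim\End$ on $R(\a_i)$ and that the cross-terms $\dim\Hom(M_i,M_j)$ are minimal. This is strictly weaker than ``$M_i$ is general in $R(\a_i)$,'' and it is not enough to conclude $\a_i$ is a Schur root. For the Kronecker quiver and $\a_i=(2,2)$, the generic representation decomposes as a sum of two simple regulars with total $\dim\End=2$, while an indecomposable regular homogeneous representation of the same dimension also has $\dim\End=2$; so an indecomposable summand can share the generic $\dim\End$ even though its dimension vector is not Schur. To close the gap one needs a finer invariant than $\dim\End$ alone --- e.g.\ openness of the locus of representations with a prescribed decomposition type (Kac), or an argument via the number of indecomposable summands together with the degeneration order.

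\textbf{The uniqueness step.} The assertion that ``the pairwise vanishing of $\Ext$ together with the Euler form identity forces $\dim\End(N')$ to attain the generic minimum'' is exactly the crux, and your computation only gives $\dim\End(N')\ge g(\a)$ (since $N'\in R(\a)$), not the reverse inequality; the argument as written is circular. What is actually needed is the dimension count: show that the image of
$G(\a)\times\prod_k R(\a'_k)\to R(\a)$, $(g,(N_k))\mapsto g\bigl(\coprod N_k\bigr)$,
is a constructible set of dimension equal to $\dim R(\a)$ (using $\End(N_k)=\k$ for Schur summands, $\Ext(N_k,N_l)=0$, and $\dim R(\a)=\dim G(\a)-\<\a,\a\>$), hence dense because $R(\a)$ is irreducible. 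That density is what makes $N'$ general, after which Krull--Schmidt gives the multiset equality. This lemma is also what repairs the Schur-root step, since it pins down the generic decomposition type. Your instinct to avoid the full Riedtmann--Zwara machinery in the $\Ext$-vanishing step is reasonable and that part can indeed be done with the $\Hom$-sequence inequality, but the two gaps above are the substantive ones.
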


\section{Define: presentation spaces and their semi-invariants}\label{sec3}
In this section we deal with integral vectors (not necessarily non-negative), define presentation spaces associated to these vectors, and consider semi-invariants with respect to the actions of certain non-reductive algebraic groups. In order to justify this, we prove in the next section \ref{iso of rings of sis}, that for non-negative vectors the rings of semi-invariants on certain special presentation spaces are isomorphic to the classical rings of semi-invariants on the quivers as in \cite{[DW1]}. In later sections we will prove analogous theorems to the three fundamental theorems.

\subsection{ Projective decompositions of integral vectors}\label{PD}
Let $\a\in\ZZ^n$ and let 
$$E^t\a=\g_0-\g_1 \text{ with }\g_0, \g_1\in\NN^n.$$ We refer to $(\g_0,\g_1)$  as a {\em projective decomposition} of $\a$ since by \ref{fact}(5) we have $\a = \ul\dim P(\g_0)-\ul\dim P(\g_1)$.
The set of projective decompositions $(\g_0 ,\g_1 )$ of $\a$
forms a directed partially ordered set $PD(\a )$ with partial ordering given by: $$(\gamma_0,\gamma_1)\leq(\gamma'_0,\gamma'_1) \ \text{ if }\  (\gamma'_0,\gamma'_1)=(\gamma_0 + \gamma, 
\gamma_1 + \gamma) \text { for some } \gamma \in \NN^n.$$ 
 Note that there is a unique {\em minimal projective decomposition} where $\g_0,\g_1$ have disjoint supports; (with $\g_0$ being the positive and $-\g_1$ the negative part of $E^t\a$.) 
 
\subsection{ Presentation spaces}\label{PresSp}
Let $\alpha \in \ZZ^n$. 
 For each projective decomposition $(\gamma_0,\gamma_1)$ of $\a$, we define a 
 {\em presentation space}
$$R
(\g_0 ,\g_1): = \Hom_Q(P(\g_1),P(\g_0)).
$$
Definitions and references for some of the special presentation spaces we use in this paper:
\begin{itemize}
\item \emph{Minimal presentation space }$R^{min}(\a):= R(\g_0,\g_1)$ for $\a\in \ZZ^n$ where $(\g_0,\g_1)$ is minimal projective decomposition. (See the Stability theorem \ref{thm:stability theorem}).
\item\emph{Canonical presentation space} $R^{can}(\a):= R(\b, \b - E^t\b + \g)$ for $\a\in \ZZ^n$ will be precisely defined in \ref{Rcan}.
\item The special case $R(\a, \a- E^t\a)$ for non-negative $\a\in \NN^n$ is particularly important for several reasons. The canonical projective presentation is an element of it (see \ref{CPP}).  Also, we will show that this is a special case of the canonical presentation space $R^{can}(\a)$, which is introduced in the subsection \ref{subsec canonical presentation space} and is important for the virtual generic decomposition theorem \ref{VirGenDec}.

\end{itemize}

The space 
$R(\g_0 ,\g_1) =\Hom_Q(P(\g_1),P(\g_0))$ 
is an affine space with the natural action of the group 
$\Aut(P(\g_0))\times (\Aut(P(\g_1)))^{op}$
which is given by 
$$(g^0, g^1)\varphi: = g^0 \varphi g^1$$
for $(g^0,g^1)\in \Aut(P(\g_0))\times \Aut(P(\g_1))^{op}$ and for each $\varphi \in \Hom_Q(P(\g_1),P(\g_0))$.  
 
 \begin{defn}\label{IsoPres}
Two elements of the presentation space $R(\g_0,\g_1)$ are called \emph{isomorphic} if they lie in the same orbit of the action of this group of automorphisms.
\end{defn}
\begin{rem} \label{RankGenPr} Similarly to general representations as in Definition \ref{GenRep}, we have:\\
(1) The \emph{general presentation} or \emph{general element} 
of the presentation space $R(\g_0,\g_1)$ is any element of a nonempty Zariski open subset of $R(\g_0,\g_1)$.\\
(2) The rank of the general element in $R(\g_0,\g_1)$, i.e. general presentation\\ $P(\g_1)\xrarrow{\f} P(\g_0)$  is the maximum of all ranks of all presentations in $R(\g_0,\g_1)$.
\end{rem}

\subsection{ Semi-invariants on presentation spaces}\label{Char}
Since $R(\g_0 ,\g_1)$ is an affine space, its coordinate ring is a polynomial ring and we consider the semi-invariants for the action of $\Aut(P(\g_0))\times (\Aut(P(\g_1)))^{op}$ on the coordinate ring $\k [R(\g_0 ,\g_1)]$. 
Recall that a semi-invariant on $R(\g_0 ,\g_1) =\Hom_Q(P(\g_1),P(\g_0))$ is a polynomial function $f$ such that for some character $\chi$: 
$$
f((g^0,g^1)\varphi)=\chi(g^0, g^1)f(\varphi)
$$
for all $(g^0,g^1)\in \Aut(P(\g_0))\times (\Aut(P(\g_1)))^{op}$ and all $\varphi\in \Hom_Q(P(\g_1),P(\g_0))$. 

 \begin{prop} \label{Prop(1-4)}
Some facts about the characters of the semi-invariants on the presentation spaces $R(\g_0,\g_1)$:
\begin{enumerate}
 \item Since the group is a product of two groups, we have: 
$$
\chi(g^0,g^1) = \chi^{0}(g^0) \chi^{1}(g^1),
$$ 
with $\chi^0$ and $\chi^1$ characters of $\Aut(P(\g_0))$ and $(\Aut(P(\g_1)))^{op}$ respectively.

\item Notice that for $P(\g)=\coprod_v{P(v)^{\g_v}}$, each element of $\Aut(P(\g))$ can be written as an $n\times n$ block triangular matrix $g=(g_{uv})$ with 
$$
g_{uv}\in\Hom_Q({P(v)}^{\g_v},{P(u)}^{\g_u})
\ \ \ 
\text{and}
\ \ \ 
g_{vv}\in\Aut({P(v)}^{\g_v})\cong Gl_{\g_v}(\k).
$$
Using the last isomorphism we identify these two groups and also the groups
$$\prod_v \Aut (P(v)^{\gamma_v})\ \text{ and }\ G(\gamma)=\prod_v Gl_{\gamma_v}(\k).$$\label{AutG}
We use the total order $u<v$ defined in section \ref{EF}  to write the matrix.
 
\item For a projective $P(\g)$, any character of the group $\Aut(P(\g))$ has the form:
$$\chi_{\s}(g)=\prod_{v\in Q_0}\det(g_{vv})^{\s_v},$$ 
with the weight vector $\s\in \ZZ^n$; however for the semi-invariants on presentation spaces, $\s\in\NN^n$. As in \ref{SIQ}, if $\g_v=0$, i.e. $g_{vv}\in Gl_0(\k)$ then $\det(g_{vv})=1$  and  $\sigma_{v}$ is indeterminate. 
\label{Indet}
\end{enumerate}
\end{prop}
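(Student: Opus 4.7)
\medskip
\noindent\textbf{Proof plan.}
Part (1) is immediate and I would dispatch it first. Any rational character of a product of algebraic groups $G^0\times (G^1)^{op}$ restricts to characters $\chi^0,\chi^1$ of the two factors, and conversely the formula $\chi(g^0,g^1)=\chi^0(g^0)\chi^1(g^1)$ defines a character; the restriction map and this multiplication are mutual inverses. Nothing about the quiver is used here.

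For part (2), the plan is to exploit the fact that $Q$ has no oriented cycles, together with the adjunction $\Hom_Q(P(v),P(u))=e_v\k Qe_u$. Because $\k Qe_v$ is the representation with basis the paths starting at $v$, this Hom-space is the $\k$-span of paths from $u$ to $v$. In particular $\End_Q(P(v))=e_v\k Qe_v=\k e_v$ (acyclicity), so $\End_Q(P(v)^{\g_v})\cong M_{\g_v}(\k)$ and $\Aut_Q(P(v)^{\g_v})\cong Gl_{\g_v}(\k)$, giving the identification of $\prod_v\Aut(P(v)^{\g_v})$ with $G(\g)$. Given the direct sum decomposition $P(\g)=\coprod_vP(v)^{\g_v}$, any endomorphism $g$ has matrix entries $g_{uv}\in\Hom_Q(P(v)^{\g_v},P(u)^{\g_u})$. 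Since $\Hom_Q(P(v),P(u))=e_v\k Qe_u$ vanishes unless there is a path from $u$ to $v$, the total order chosen in \ref{EF} forces $g_{uv}=0$ whenever $u>v$. So $g$ is block upper triangular; $g$ is invertible iff each diagonal block $g_{vv}\in Gl_{\g_v}(\k)$ is invertible, by upper-triangularity of the determinant.

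For part (3), the plan is a Levi decomposition argument. Write $\Aut(P(\g))=L\ltimes U$ where $L=\prod_v\Aut(P(v)^{\g_v})\cong G(\g)$ is the Levi of diagonal blocks and $U$ is the strictly block upper triangular subgroup (entries $g_{vv}=\text{id}$, $g_{uv}$ arbitrary for $u<v$). The subgroup $U$ is unipotent and successively a tower of affine spaces $\mathbb{G}_a^N$, so it has no non-trivial rational characters; hence any rational character of $\Aut(P(\g))$ is determined by its restriction to $L$. A rational character of $Gl_n(\k)$ is a power of $\det$ (when $\k$ has at least three elements, as already noted in the remark after \ref{DefSI}), so characters of $L$ are precisely $\chi_\s(g)=\prod_v\det(g_{vv})^{\s_v}$ for $\s\in\ZZ^n$. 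This proves the general claim. For the refinement that $\s\in\NN^n$ when $\chi$ is the character of an actual semi-invariant $f\in \k[R(\g_0,\g_1)]$: fix $\varphi$ with $f(\varphi)\neq 0$ and note that the map $g\mapsto f(g\varphi)/f(\varphi)=\chi(g)$ is a regular (polynomial) function on the affine group $\Aut(P(\g_0))\times\Aut(P(\g_1))^{op}$, hence extends to a polynomial in the matrix entries of $g$; since $\det^{-1}$ is not polynomial, the exponents $\s_v$ must be nonnegative.

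The only conceptually non-trivial step is the triviality of rational characters on the unipotent radical $U$; everything else is a bookkeeping consequence of acyclicity of $Q$ and of the structure of rational characters on $Gl_n$.
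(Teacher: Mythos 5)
The paper states this proposition as a list of ``facts'' and supplies no proof, so there is nothing in the source to measure you against; you have written the natural argument, and most of it is correct. Parts (1) and (2) are fine: part (1) is the standard splitting of characters across a product, and part (2) is exactly the right use of acyclicity — $e_v\k Qe_v=\k e_v$ forces $\Aut_Q(P(v)^{\g_v})\cong Gl_{\g_v}(\k)$, and $\Hom_Q(P(v),P(u))\neq0$ only if there is a path from $u$ to $v$, i.e.\ $u\leq v$ in the order of \ref{EF}, so $g$ is block upper triangular. Your Levi decomposition $\Aut(P(\g))=L\ltimes U$ in part (3) is also the right tool: $U$ is connected unipotent, hence has no nontrivial rational characters, so characters of $\Aut(P(\g))$ are determined by their restriction to $L\cong G(\g)$, and those are products of powers of the block determinants.

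The one step that is too quick is the last inference of part (3), that the weight $\s$ of an actual semi-invariant $f$ lies in $\NN^n$. You say $g\mapsto f(g\varphi)/f(\varphi)$ is ``a regular (polynomial) function on the affine group \dots hence extends to a polynomial.'' That implication is false in general: the coordinate ring of $\Aut(P(\g_0))\times\Aut(P(\g_1))^{op}$ contains $\det^{-1}$, so a regular function on the group need not extend to the ambient endomorphism space — $\det^{-1}$ itself is the counterexample. What actually drives the argument (and is the conceptual reason weights are nonnegative here, while weights on $R(\a)$ with the conjugation action can be negative) is that the group action $(g^0,g^1)\varphi=g^0\varphi g^1$ involves only composition and no inversion, so it is the restriction of a \emph{polynomial} map $\End(P(\g_0))\times\End(P(\g_1))^{op}\to R(\g_0,\g_1)$. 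Hence $g\mapsto f(g\varphi)$ is the restriction to the open set $\Aut\times\Aut^{op}$ of a polynomial on $\End\times\End^{op}$, and since it equals $f(\varphi)\prod_v\det(g^0_{vv})^{\s^0_v}\prod_v\det(g^1_{vv})^{\s^1_v}$ on a dense open set, each exponent with $\g_{i,v}>0$ must be nonnegative (and those with $\g_{i,v}=0$ are indeterminate, so may be taken nonnegative). Make this point explicit and the proof is complete.
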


With the above, we see that the semi-invariants on presentation spaces have pairs of characters associated to them and also pairs of weights.

\begin{defn} \label{DefPsSym} Denote  by $SI^{(\g_0 ,\g_1 )}(Q,\a )_{(\chi^0 ,\chi^1)}$ the  set of semi-invariants on $R(\g_0 ,\g_1)$ with character $(\chi^0 ,\chi^1)$  and the associated graded ring of semi-invariants by: 
$$SI^{(\g_0 ,\g_1 )}(Q,\a ): = \bigoplus_{(\chi^0 ,\chi^1)} SI^{(\g_0 ,\g_1 )}(Q,\a )_{(\chi^0 ,\chi^1)}.$$
\end{defn}

\begin{prop}\label{all si are p-symmetric}
Let $\a\in\ZZ^n$, let $(\g_0,\g_1)$ be a projective decomposition of $\a$, and $R(\g_0,\g_1)$ the corresponding presentation space. 
Let $f$ be a semi-invariant on $R(\g_0,\g_1)$ with the character $(\chi^0_{\s^0},\chi^1_{\s^1})$. Then $\s^0_v=\s^1_v$ if both $\g_{0,v}\neq0$ and $\g_{1,v}\neq0$.
\end{prop}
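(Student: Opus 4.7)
The plan is to exhibit a one-parameter subgroup $H = \{h(\lambda)\}_{\lambda \in \k^*}$ of $G := \Aut(P(\g_0)) \times \Aut(P(\g_1))^{op}$ whose character on any semi-invariant of weight $(\s^0,\s^1)$ equals $\lambda^{\s^0_v - \s^1_v}$, then to show that the fixed locus of $H$ has Zariski-dense $G$-saturation, and finally to conclude that any nonzero $f$ must be $H$-invariant at some point. The case $f \equiv 0$ is trivial, so assume $f \neq 0$.

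Via the Levi inclusion $\prod_u Gl_{\g_{0,u}}(\k) \hookrightarrow \Aut(P(\g_0))$ of Proposition~\ref{Prop(1-4)}(2), let $g^0(\lambda)$ act by $\operatorname{diag}(\lambda,1,\dots,1) \in Gl_{\g_{0,v}}(\k)$ on the $v$-block and as the identity on all other blocks; symmetrically $g^1(\lambda)$ acts by $\operatorname{diag}(\lambda^{-1},1,\dots,1)$ on the $v$-block of $P(\g_1)$. Proposition~\ref{Prop(1-4)}(3) then gives $\chi^0(g^0(\lambda))\,\chi^1(g^1(\lambda)) = \lambda^{\s^0_v}\lambda^{-\s^1_v} = \lambda^{\s^0_v-\s^1_v}$. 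Writing $\varphi = (\varphi_{uw})$ as a block matrix with $\varphi_{uw} \in M_{\g_{0,u} \times \g_{1,w}}(e_u \k Q e_w)$, the element $h(\lambda)$ scales the scalar $(i,j)$-coefficient of $\varphi_{uw}$ (relative to a basis of $e_u \k Q e_w$) by $\lambda$ when $(u,i) = (v,1)$ but $(w,j) \neq (v,1)$, by $\lambda^{-1}$ when $(u,i) \neq (v,1)$ but $(w,j) = (v,1)$, and trivially otherwise; in particular the entry $\varphi_{vv,11}$ is preserved since $\lambda \cdot \lambda^{-1} = 1$. The fixed subspace $F \subset R(\g_0,\g_1)$ is therefore the locus on which every entry scaled nontrivially above vanishes.

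The main technical step is to show that $G \cdot F$ is Zariski dense in $R(\g_0,\g_1)$. Given a generic $\varphi$, I would produce $g \in G$ with $g\varphi \in F$ by combining (i) row/column operations from $Gl_{\g_{0,v}}(\k) \times Gl_{\g_{1,v}}(\k)^{op}$, which clear the entries $\varphi_{vv,1j}$ for $j>1$ and $\varphi_{vv,i1}$ for $i>1$; and (ii) unipotent elements---blocks $g^0_{vu}$ in $\Aut(P(\g_0))$ for $u \neq v$ with a path $u \to v$ (which shift $u$-rows into the first $v$-row) and symmetric blocks $g^1_{wv}$ in $\Aut(P(\g_1))$ for $w \neq v$ with a path $w \to v$ (which shift $w$-columns into the first $v$-column)---to clear the off-block entries $\varphi_{vw,1j}$ for $w \neq v$ and $\varphi_{uv,i1}$ for $u \neq v$, respectively. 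A parameter count shows the resulting linear systems in the coefficients of $g$ are solvable for generic $\varphi$.

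With density in hand, the restriction $f|_F$ cannot be identically zero: were it so, then $f(g\varphi) = \chi(g) f(\varphi)$ would vanish on the dense set $G \cdot F$, forcing $f \equiv 0$. Choosing $\varphi_0 \in F$ with $f(\varphi_0) \neq 0$, the identity $h(\lambda)\varphi_0 = \varphi_0$ gives $f(\varphi_0) = \lambda^{\s^0_v - \s^1_v} f(\varphi_0)$ for every $\lambda \in \k^*$, which forces $\s^0_v = \s^1_v$. The main obstacle I anticipate is the density step: verifying it requires careful linear-algebra bookkeeping at $v$ that depends on the local quiver structure and the specific values of $\g_{0,v}$ and $\g_{1,v}$. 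Favorable cases---such as when $v$ is isolated from the rest of the support of $(\g_0,\g_1)$, so that $F = R(\g_0,\g_1)$---are automatic; corner cases such as $\g_{0,v} = 1$ or $\g_{1,v} = 1$, where no intra-block row/column operations are available, shift all of the work onto the availability of paths into and out of $v$.
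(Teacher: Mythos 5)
Your strategy --- construct a subgroup of $G=\Aut(P(\g_0))\times\Aut(P(\g_1))^{op}$ with a nontrivial fixed locus $F$ whose $G$-saturation is dense, then read off the weight identity from the semi-invariant equation on a point of $F$ where $f\neq 0$ --- is exactly the paper's strategy, but your execution is less efficient and leaves the crucial density step open. The paper takes $\g = (\min\{\g_{0,v},\g_{1,v}\})_v$, so that $(\g_0-\g,\g_1-\g)$ is the \emph{minimal} projective decomposition, and uses the whole group $\Aut(P(\g))$, embedded diagonally as $g\mapsto(1_{P(\g_0-\g)}\smallcoprod g,\, 1_{P(\g_1-\g)}\smallcoprod g^{-1})$. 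The fixed-point verification is then a one-liner --- $(1\smallcoprod g)(\varphi'\smallcoprod 1_{P(\g)})(1\smallcoprod g^{-1})=\varphi'\smallcoprod 1_{P(\g)}$ --- and the semi-invariant equation immediately yields $\chi^0_{\s^0}(g)=\chi^1_{\s^1}(g)$ for every $g\in\Aut(P(\g))$, giving $\s^0_v=\s^1_v$ on the whole support of $\g$ at once. Moreover, the fixed locus is exactly the image of the stabilization map $R^{min}(\a)\to R(\g_0,\g_1)$, so the density needed is precisely the Stability Theorem~\ref{thm:stability theorem}, which is already isolated in the paper as a separate result (albeit proved later; both proofs implicitly depend on it).

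The genuine gap in your proposal is that the density of $G\cdot F$ is asserted but not proved, and the reasons you give for worrying about it point in the wrong direction. The cases $\g_{0,v}=1$ or $\g_{1,v}=1$ are not genuinely problematic: the only pivot you ever use is $\varphi_{vv,11}\in\Hom_Q(P(v),P(v))=\k$, which is generically nonzero regardless of the block sizes; with $\varphi_{vv,11}\neq 0$ the intra-block row/column operations clear $\varphi_{vv,1j}$ for $j>1$ and $\varphi_{vv,i1}$ for $i>1$, and the off-block unipotent operations clear the remaining entries of the $(v,1)$-row and $(v,1)$-column whenever those entries can be nonzero at all (they live in the same $\Hom$-spaces $\Hom_Q(P(w),P(v))$ and $\Hom_Q(P(v),P(u))$ that parametrize the available unipotents, so the obstruction you fear never occurs). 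Carried out carefully, this is exactly the block Gaussian elimination in the paper's proof of the Stability Theorem specialized to $\g=e_v$; you should either carry it out or cite that statement. A second, minor point: your hand-waving ``parameter count'' does not by itself show solvability of the linear systems --- what makes it work is the triangular structure coming from the partial order on $Q_0$ plus the invertible pivot $\varphi_{vv,11}$, which is the actual content of the stability argument. Finally, note a small notational slip: the entry $\varphi_{uw}$ lies in $M_{\g_{0,u}\times\g_{1,w}}\bigl(\Hom_Q(P(w),P(u))\bigr)$, i.e.\ has coefficients spanned by paths from $u$ to $w$, not in $e_u\k Q e_w$ as written.
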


\begin{proof} 
Let $f$ be a semi-invariant on $R(\g_0,\g_1)$, $f((g^0,g^1)\varphi) = \chi^0_{\s^0}(g^0) \chi^1_{\s^1}(g^1) f(\varphi)$ for all 
$(g^0,g^1)\in \Aut P(\g^0)\times \Aut P(\g^1)^{op}$ and all $\varphi \in R(\g^0,\g^1)$. We need to show that $\s^0_v=\s^1_v$ for all $v\in Q_0$ for which both $\g_{0,v}\neq 0$ and $\g_{1,v}\neq 0$.

If $(\g_0,\g_1)$ is the minimal projective decomposition of $\a$ then there is no $v\in Q_0$ such that both $\g_{0,v}\neq 0$ and $\g_{1,v}\neq 0$, so there is nothing to prove.

In order to deal with any projective decomposition, define  $\g_v:=min\{\g_{0,v},\g_{1,v}\}$. Then $\g_v\neq 0$ precisely at the vertices where both $\g_{0,v}\neq 0$ and $\g_{1,v}\neq 0$. 
Let $\g\in\NN^n$ be defined as $\g= (\g_v)$. Then $(\g_0-\g),(\g_1-\g)\in \NN^n$, actually
$(\g_0-\g,\g_1-\g)$ is the minimal projective decomposition of $\a$.

Since we need to check the weights of the semi-invariant $f$ only at the vertices $v$, where $\g_v\neq 0$, we will consider the following presentations and group elements
$$\varphi = \varphi'\smallcoprod 1_{P(\g)}\in \Hom_Q\big(P((\g_1-\g)\smallcoprod P(\g),P(\g_0-\g)\smallcoprod P(\g)\big) \ \text{ and }$$ 
$$(1_{P((\g_1-\g)}\smallcoprod g)\times (1_{P((\g_0-\g)}\smallcoprod g^{-1})\in \Aut(P(\g_0))\times  \Aut(P(\g_1))^{op}$$ 
 for $g\in \Aut(P(\g))$. Then we have 
 $$f(\varphi'\smallcoprod 1_{P(\g)})=
f\left((1_{P((\g_0-\g)}\smallcoprod g)\cdot (\varphi'\smallcoprod 1_{P(\g)})\cdot (1_{P((\g_1-\g)}\smallcoprod g^{-1})\right)=$$
$$\chi_{\s^0}^0(g)\chi_{\s^1}^1(g^{-1})f(\varphi'\smallcoprod1_{P(\g)}).
$$
So, $\chi_{\s^0}^0(g)=\chi_{\s^1}^1(g)$. Therefore $\s^0_v=\s^1_v$ for all $v$ for which $\g_v\neq0$.
\end{proof}

\begin{defn}
Let $f$ be a semi-invariant on $R(\g_0 ,\g_1)$. 
The \emph{combined weight} $\s = \s^{comb}$ of $f$ is defined to be  $\s^{comb}_{v} : = max\{\s^{0}_{v}, \s^{1}_{v}\}$ for all $v\in Q_0$,
and the \emph{combined  character} $\chi_{\s}$ to be 
$(\chi_{\s},\chi_{\s})$.
\end{defn}

\begin{defn} \label{DefPsSym combined} Denote  by $SI^{(\g_0 ,\g_1 )}(Q,\a )_{\chi}$ the  set of semi-invariants on $R(\g_0 ,\g_1)$ with combined character $\chi$.
\end{defn}

\section{Relate: representation and presentation spaces and their semi-invariants for $\a\in \NN^n$}\label{sec4}

Now consider only non-negative integral vectors $\a\in\NN^n$ and compare the classical representation space $R(\a)$ and the special presentation space $R(\a,\a-E^t\a)$ together with the natural group actions of $Gl(\a)$ and $\Aut P(\a) \times \Aut P(\a-E^t\a)^{op}$. We give relations between these spaces and prove that their rings of semi-invariants are isomorphic (\ref{iso of rings of sis}).

\subsection{ Relations between representation and presentation spaces for $\a\in \NN^n$}\label{subsec4}

In order to compare these two spaces, we define the mapping
$$\z: R(\a)\to R(\a,\a-E^t\a)$$ 
as
$\z(M)= p_M$, where $p_M$ is the canonical projective presentation of $M$ (\ref{CPP}).
Consider the subspace $Im \z \subset R(\a,\a-E^t\a)$ and orbits of this subspace under the action of the groups $\Aut P(\a)$, $\Aut P(\a-E^t\a)^{op}$ and $\Aut P(\a) \times \Aut P(\a-E^t\a)^{op}$.

For each projective module $P(\a)$ define $T(\a)$ in the following way: 
if $\a=e_v$, the unit vector at $v$, then $P(\a):=P(v)$ as in \ref{Qr}, i.e. it is a vector space generated by all paths starting at $v$. Let $T(e_v):=\k e_v$ be the linear subspace generated by the constant path at $v$. For any $\a\in\NN$, we have a decomposition of $\a$ as a sum of unit vectors $e_v$. This way we have chosen an internal direct sum decomposition $P(\a) =\sum P(e_v)$. Let $T(\a)=\sum T(e_v)$.


\begin{defn} Let $U(\a,\a-E^t\a)\subset R(\a,\a-E^t\a)$ be the open subspace
defined as:
\noi$\{\psi:P(\a-E^t\a)\to P(\a) |\ \psi \text{ monomorphism, } Im(\psi) \text{ is complementary to } T(\a)\}.$
\end{defn}

\begin{eg}
For example, if $P(\a)$ is indecomposable, then $rad P(\a)$ is a submodule of $P(\a)$ which is complementary to $T(\a)$ since $P(\a)=T(\a)\oplus rad P(\a)$ as a vector space over $\k$; however, when $P(\a)$ is not indecomposable there might be other such submodules which are not equal to $rad P(\a)$, although each of them is isomorphic to $rad P(\a)$ as a representation.
\end{eg}
\begin{lem}\label{lem} 
Let $\a\in\NN^n$ and $\z: R(\a)\to R(\a,\a-E^t\a)$ be defined as $\z(M)= p_M$, the canonical projective presentation. Then the orbit of $Im(\z)$ under the action of $\Aut P(\a-E^t\a)^{op}$ is $U(\a,\a-E^t\a)$ and is thus open.
\end{lem}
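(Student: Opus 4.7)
The plan is to verify three things: (i) $Im(\z)\subseteq U(\a,\a-E^t\a)$; (ii) every $\psi\in U(\a,\a-E^t\a)$ lies in the $\Aut P(\a-E^t\a)^{op}$-orbit of some $p_M$; and (iii) $U(\a,\a-E^t\a)$ is Zariski open. For (i), I would first observe that the exact sequence $0\to P(\a-E^t\a)\xrightarrow{p_M}P(\a)\to M\to 0$ exhibits $p_M$ as a monomorphism. Then, by the explicit formula for the canonical presentation in \ref{CPP}, the quotient $q:P(\a)\to M$ sends each generator $e_v^{(i)}\in T(v)^{\a_v}$ to the $i$th chosen basis vector of $M_v$; since $\dim T(\a)_v=\a_v=\dim M_v$, this forces $q|_{T(\a)}$ to be a vertex-wise $\k$-linear isomorphism, equivalently $T(\a)\oplus Im(p_M)=P(\a)$ as vector spaces at every vertex, which is the defining condition of $U(\a,\a-E^t\a)$.

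For (ii), one containment is immediate: since precomposition by any $g\in\Aut P(\a-E^t\a)$ preserves the image, the $\Aut P(\a-E^t\a)^{op}$-orbit of $Im(\z)$ is contained in $U(\a,\a-E^t\a)$ by (i). For the reverse containment, given $\psi\in U(\a,\a-E^t\a)$ I would set $M:=\coker\psi$ with its induced representation structure, and let $q:P(\a)\to M$ denote the quotient. Complementarity makes $q|_{T(\a)}$ an isomorphism at each vertex, hence supplies a distinguished basis $(q(e_v^{(i)}))_i$ of each $M_v$. Building the canonical presentation $p_M:P(\a-E^t\a)\to P(\a)$ of $M$ with respect to this basis, its cokernel is canonically identified with $M$, and the associated quotient $q_M:P(\a)\to M$ sends $e_v^{(i)}$ to the chosen basis vector $q(e_v^{(i)})$. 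Since $q_M$ and $q$ are $Q$-module maps out of $P(\a)$ that agree on the module generators $T(\a)$, one has $q_M=q$, whence $Im(p_M)=\ker q_M=\ker q=Im(\psi)$. As $p_M$ and $\psi$ are both monomorphisms from $P(\a-E^t\a)$ sharing this common image, the composite $g:=p_M^{-1}\circ\psi$ (inverting $p_M$ onto its image) is a well-defined element of $\Aut P(\a-E^t\a)$, and $\psi=p_M\circ g$ places $\psi$ in the desired orbit.

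For (iii), I would reformulate the defining condition: $\psi\in U(\a,\a-E^t\a)$ if and only if the $\k$-linear map $\tilde\psi:P(\a-E^t\a)\oplus T(\a)\to P(\a)$, given by $\psi$ on the first summand and the inclusion on the second, is an isomorphism at each vertex. The dimension identity $\ul\dim P(\a)-\ul\dim P(\a-E^t\a)=\a=\ul\dim T(\a)$ (which follows from Remark \ref{fact}) ensures that source and target have matching vertex-wise dimensions, so the isomorphism condition reduces vertex-by-vertex to the non-vanishing of a square determinant polynomial in the entries of $\psi$, a Zariski open condition. The main subtlety will be in step (ii): one must ensure that the canonical presentation $p_M$ reconstructed from $\psi$ has image \emph{equal} to $Im(\psi)$, not merely isomorphic to it. This is precisely why one must use the same quotient $q$ both to put the basis on $M$ and to identify the cokernel of $p_M$ with $M$, so that the comparison $q_M=q$ forces the images to coincide.
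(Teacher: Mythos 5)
Your proof is correct and follows essentially the same route as the paper's: given $\psi\in U(\a,\a-E^t\a)$, pass to $M=\coker\psi$, use complementarity with $T(\a)$ to realize $M$ as an element of $R(\a)$, observe that the induced quotient coincides with the canonical one so that $Im(p_M)=Im(\psi)$, and conclude that $\psi$ and $p_M$ differ by an automorphism of $P(\a-E^t\a)$. You are somewhat more explicit than the paper (which leaves the $q_M=q$ identification and the Zariski-openness of $U$ largely implicit), but the underlying argument is the same.
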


\begin{proof} 
We show that $\Aut(P(\a-E^t\a))^{op} Im(\z)=Im(\z)\Aut P(\a-E^t\a)$ is equal to $U(\a,\a-E^t\a)$ and is thus open. Let $\elemofU:P(\a-E^t\a)\to P(\a)$ be an element of $U(\a,\a-E^t\a)$ and let $M=\coker \elemofU$. Then, by definition, the quotient map $P(\a)\to M$ is the same as the map $\maptoM$ in the canonical projective presentation
\[
	P(\a-E^t\a)\xrarrow{\z(M)}P(\a)\xrarrow{\maptoM}M.
\]
Therefore the image of $\z(M)$ is the same as the image of $\elemofU$, and $\elemofU$ and $\z(M)$ differ by an automorphism of $P(\a-E^t\a)$.
\end{proof}

\begin{prop}\label{orbit prop} 
Let $\a\in\NN^n$ and $\z: R(\a)\to R(\a,\a-E^t\a)$, $\z(M)= p_M$  the canonical projective presentation. Then
the orbit of $Im(\z)$ under the action of $\Aut P(\a)\times\Aut P(\a-E^t\a)^{op}$ is an open and dense subset of $R(\a,\a-E^t\a)$.
\end{prop}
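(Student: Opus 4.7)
The plan is to leverage Lemma \ref{lem} directly: that lemma already identifies the orbit of $\Im(\zeta)$ under the subgroup $\{1\}\times\Aut P(\a-E^t\a)^{op}$ with the explicit open subset $U(\a,\a-E^t\a)$. The orbit under the full group $\Aut P(\a)\times\Aut P(\a-E^t\a)^{op}$ therefore contains $U(\a,\a-E^t\a)$, and the only work left is to deduce openness and density from this inclusion.

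\textbf{Openness.} The full orbit is the union $\bigcup_{g^0 \in \Aut P(\a)} g^0 \cdot U(\a,\a-E^t\a)$, since acting first by $(1,g^1)$ lands in $U$ and then by $(g^0,1)$ translates $U$. Because the action of each $g^0$ on the affine space $R(\a,\a-E^t\a)=\Hom_Q(P(\a-E^t\a),P(\a))$ is a linear automorphism, each translate $g^0\cdot U$ is open, so the orbit is open as a union of open sets.

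\textbf{Density.} The space $R(\a,\a-E^t\a)$ is an affine space, hence irreducible. So it suffices to show that $U(\a,\a-E^t\a)$ is nonempty, i.e.\ that $\Im(\zeta)\subset U(\a,\a-E^t\a)$ — any nonempty Zariski-open subset of an irreducible variety is dense, and the orbit contains $U$, so the orbit is dense as well.

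\textbf{The one substantive check.} What must be verified is that for every $M\in R(\a)$ the canonical presentation $p_M=\zeta(M)$ lies in $U(\a,\a-E^t\a)$, i.e.\ is a monomorphism whose image is complementary to $T(\a)$. The monomorphism property is immediate from the short exact sequence in \ref{CPP}, which uses that $\k Q$ is hereditary. For the complementarity, one observes that the composition $T(\a)\into P(\a)\onto M$ sends the constant path generator in the $i$-th copy of $P(v)$ to the $i$-th basis vector of $M_v$, hence is a $\k$-linear isomorphism. Consequently $T(\a)\cap\ker(P(\a)\onto M)=0$, and since $\dim_\k T(\a)+\dim_\k\ker=\dim_\k P(\a)$, we conclude $T(\a)\oplus\Im(p_M)=P(\a)$. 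This is the main (and only nontrivial) obstacle; everything else is formal.
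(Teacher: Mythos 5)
Your proof is correct and follows the same route as the paper's: openness of the full orbit comes from writing it as a union of $\Aut P(\a)$-translates of the open set $U(\a,\a-E^t\a)$ supplied by Lemma~\ref{lem}, and density follows from irreducibility of the affine space $R(\a,\a-E^t\a)$. The only difference is that you explicitly verify $\Im(\z)\subset U(\a,\a-E^t\a)$ (a detail the paper leaves implicit in Lemma~\ref{lem}), which is fine but not strictly needed once that lemma is invoked.
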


\begin{proof}
Since $\Aut P(\a-E^t\a)^{op}$ is a subgroup of $\Aut P(\a)\times \Aut P(\a-E^t\a)^{op}$ and the $\Aut P(\a-E^t\a)^{op}$ orbit of $Im(\z)$ is open in $R(\a,\a-E^t\a)$ the result follows. 
\end{proof}

\begin{rem}\label{rem:general properties of reps and maps}
General properties of representations (properties that hold on an open subset of $R(\a)$) are also general properties of elements of $R(\a,\a-E^t\a)$. This proposition tells us that, conversely, the general intrinsic (i.e. invariant under isomorphism) properties of elements of $R(\a,\a-E^t\a)$ are also general properties of elements of $R(\a)$.
\end{rem}

\begin{lem}
There is a 1-1 correspondence, given by quotients, between the submodules of $P(\a)$ which are complementary to $T(\a)$ and the elements of the representation space $R(\a)$. Furthermore, all such submodules are isomorphic to $P(\a-E^t\a)$.
\end{lem}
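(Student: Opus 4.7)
The plan is to exhibit the bijection explicitly using the canonical projective presentation machinery recalled in Section~\ref{CPP}.

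First I would build the map from complementary submodules to representations: given $N\subseteq P(\a)$ with $P(\a)=T(\a)\oplus N$ as $\k$-vector spaces, I set $M:=P(\a)/N$. The composite $T(\a)\into P(\a)\onto M$ is a $\k$-linear isomorphism, and since $T(\a)=\sum_v T(e_v)$ with $\ul\dim T(\a)=\a$, transporting the distinguished basis of $T(\a)$ yields a basis of $M$ with $\ul\dim M=\a$. Writing the induced action of each arrow $a:u\to v$ in this basis gives matrices $M_a:M_u\to M_v$, hence an element of $R(\a)$.

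Next I would build the inverse. Given $M\in R(\a)$, the canonical projective presentation
\[
0\to P(\a-E^t\a)\xrarrow{p_M}P(\a)\xrarrow{\pi_M} M\to 0
\]
of Section~\ref{CPP} is exact, so the submodule $N:=\im(p_M)=\ker(\pi_M)$ satisfies $P(\a)/N\iso M$. By the explicit formula for $p_M$ recalled in \ref{CPP}, the surjection $\pi_M$ sends the $i$-th copy of the trivial path $e_v\in T(e_v)$ to the $i$-th basis vector of $M_v$; hence $\pi_M|_{T(\a)}$ is a $\k$-linear isomorphism, and equivalently $T(\a)\oplus N=P(\a)$, so $N$ is complementary to $T(\a)$.

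To see the two assignments are mutually inverse, I would note that in both constructions the surjection $P(\a)\onto P(\a)/N=M$ sends $e_v^{(i)}$ to the $i$-th basis vector of $M_v$; two surjections out of $P(\a)$ agreeing this way are equal, and in particular have the same kernel $N$, while the matrices $M_a$ read off from the action of arrows in the quotient basis agree with the matrices packaged into $p_M$. Finally, the isomorphism $N\iso P(\a-E^t\a)$ is immediate from injectivity of $p_M$ in the exact sequence above.

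The only genuine bookkeeping is the last identification: verifying that the matrices $M_a$ produced from a given $N$ via the quotient basis coincide with those that, fed into $p_M$, produce $N$ back. This follows directly from the explicit formula $((-incl_a)^{\a_u},M_a)$ for the restriction of $p_M$ to each summand, so the main work is simply keeping track of how the trivial-path basis of $T(\a)$ is transported across the short exact sequence.
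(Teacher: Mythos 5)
Your proposal is correct and follows essentially the same route as the paper's own proof: the forward map is $N\mapsto P(\a)/N$ using the vector-space isomorphism $T(\a)\iso P(\a)/N$ to read off matrices, the inverse is $M\mapsto\ker(\pi_M)=\im(p_M)$ from the canonical presentation, and the final isomorphism $N\iso P(\a-E^t\a)$ comes from injectivity of $p_M$. You spell out in more detail why the two constructions are mutually inverse (via the transported trivial-path basis), which the paper leaves as an assertion, but there is no difference of method.
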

\begin{proof}
Given a submodule $L\subset P(\a)$ which is complementary to $T(\a)$, we take the quotient module $P(\a)/L$. Since this is vector space isomorphic to $T(\a)$, the structure maps are matrices and we get an explicit element of $R(\a)$. 

Given any $M\in R(\a)$ the corresponding submodule of $P(\a)$ is the kernel of the canonical projection map $\maptoM:P(\a)\to M$. This is also the image of the canonical presentation map $p_M:P(\a-E^t\a)\to P(\a)$ which is always a monomorphism with image complementary to $T(\a)$.

These constructions are clearly inverse to each other.
\end{proof}

\begin{prop}\label{coker prop} 
Cokernels of homomorphisms define a mapping, which we denote by $
	\coker:U(\a,\a-E^t\a)\to R(\a)$. Furthermore,
\begin{enumerate}
\item $\coker$ is a rational map.\label{rationalmap}
\item $\coker\circ \z = Id_{R(\a)}$ hence $\z$ is a monomorphism.
\end{enumerate}
\end{prop}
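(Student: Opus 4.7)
The plan is to work in the coordinates given by the fixed vector space decomposition $P(\a) = T(\a)\oplus C$, where $C$ is any convenient complement (say the span of all non-constant paths), together with the observation that for each $\psi\in U(\a,\a-E^t\a)$ the submodule $Im(\psi)$ supplies a second, $\psi$-dependent decomposition $P(\a) = T(\a)\oplus Im(\psi)$. The composition $T(\a)\into P(\a)\onto P(\a)/Im(\psi)$ is therefore a vector space isomorphism at each vertex, with the correct dimension $\a_v$ at vertex $v$. Pulling the quiver representation structure on $P(\a)/Im(\psi)$ back through this isomorphism defines $\coker(\psi)$ as an element of $R(\a)$, establishing the map on the level of sets.

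For part (1), the content is that the projection $\pi_\psi:P(\a)\onto T(\a)$ along $Im(\psi)$ depends rationally on $\psi$. Using the fixed splitting $P(\a)_v = T(\a)_v\oplus C_v$, the restriction to $Im(\psi)_v$ of the second projection is a linear map $Im(\psi)_v\to C_v$; since $Im(\psi)_v$ is complementary to $T(\a)_v$ and $\dim Im(\psi)_v=\dim C_v$, this restriction is a vector space isomorphism, and its inverse is rational in the matrix entries of $\psi$ by Cramer's rule. Composing appropriately exhibits $\pi_\psi$ as rational in $\psi$. At each arrow $a:u\to v$, the structure map of $\coker(\psi)$ is the composition $T(\a)_u\xrarrow{a\cdot}P(\a)_v\xrarrow{\pi_\psi}T(\a)_v$, which is therefore rational in $\psi$.

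For part (2), I will verify $\coker(p_M)=M$ directly from the explicit formula for $p_M$ recalled in \ref{CPP}. Restricted to the summand $P(v)^{\a_u}\subset P_1$ corresponding to an arrow $a:u\to v$, $p_M$ is $((-incl_a)^{\a_u},M_a)$ into $P(u)^{\a_u}\smallcoprod P(v)^{\a_v}$. Evaluated at the $i$-th top generator $e_i\in T(P(v)^{\a_u})_v$, this produces $-a_i+M_a(e_i)$ inside $P(\a)_v$, where $a_i$ denotes the path $a$ sitting in the $i$-th copy of $P(u)$ and $M_a(e_i)$ lies in $T(P(v)^{\a_v})_v\subset T(\a)_v$. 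Reducing modulo $Im(p_M)_v$ gives $a_i\equiv M_a(e_i)$. Since multiplication by $a$ carries the $i$-th generator of $T(\a)_u$ precisely to $a_i$, the recovered structure map $\pi_{p_M}\circ(a\cdot)$ agrees with $M_a$ on every basis vector, so $\coker(p_M)=M$ as representations. Injectivity of $\z$ is then immediate from the existence of a left inverse.

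The main obstacle is bookkeeping rather than any conceptual difficulty: every vertex symbol plays three simultaneous roles (head of an arrow, label of a projective summand, and location where one evaluates a representation), and the verification requires keeping these straight. Once the indices are pinned down, both claims reduce to a direct reading of the formulas above.
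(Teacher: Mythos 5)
Your proof is correct and follows the same underlying approach as the paper: define $\coker(\psi)$ via the vector space isomorphism $T(\a)\cong P(\a)/\mathrm{Im}(\psi)$ at each vertex, and obtain rationality by expressing the $\psi$-dependent projection $\pi_\psi$ in terms of the inverse of a square submatrix of $\psi$ via Cramer's rule — exactly what the paper compresses into ``straightforward linear algebra.'' For part (2), where the paper appeals to the preceding lemma (which asserts the two constructions are ``clearly inverse to each other''), you instead verify $\coker(p_M)=M$ by a direct computation with the formula for $p_M$ from \ref{CPP}; this is the same argument made explicit rather than a genuinely different route.
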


\begin{proof}
For each $\elemofU\in U(\a,\a-E^t\a)$, the representation $L=\im \elemofU$ is complementary to $T(\a)$ by definition. Therefore $\coker\elemofU=P(\a)/L$ is an element of $R(\a)$ by the above lemma. Straightforward linear algebra shows that this is a rational map. The canonical presentation of any element of $R(\a)$ lies in $U(\a,\a-E^t\a)$ and the following composition is the identity map:
$
	R(\a)\xrarrow{\z}U(\a,\a-E^t\a)\xrarrow{\coker}R(\a).
$
\end{proof}

\subsection{ Semi-invarints on representation and presentation spaces for $\a\in \NN^n$}

First we show that the weights of semi-invariants on the classical representation space and the new presentation space are related by the Euler matrix. Then we use \ref{orbit prop} to show that the ring of semi-invariants on $R(\a,\a-E^t\a)$ is generated by the $\cc_V$'s which are the classical $c_V$'s, i.e., $\det\Hom_Q(p_M,V)$ but now evaluated on all elements of $R(\a,\a-E^t\a)$.



\begin{prop}\label{wrelation}
Let $\a\in\NN^n$ and $f$ be a semi-invariant on $R(\a,\a-E^t\a)$ with combined character $\chi_{\s}$. Then $f\circ\z$ is a semi-invariant on $R(\a)$ with character $\chi_{E\s}$.
\end{prop}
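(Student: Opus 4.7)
The plan is to exhibit a group homomorphism
\[
\rho: G(\a)\to \Aut(P(\a))\times \Aut(P(\a-E^t\a))^{op}
\]
that is compatible with $\zeta$ in the sense that $\zeta(gM)=\rho(g)\cdot \zeta(M)$, i.e.\ $p_{gM}=\rho(g)\cdot p_M$. Once such a $\rho$ is in hand, semi-invariance of $f$ immediately yields
\[
(f\circ\zeta)(gM)=f(\rho(g)\cdot p_M)=\chi(\rho(g))\,f(p_M)=\chi(\rho(g))\,(f\circ\zeta)(M),
\]
so that $f\circ\zeta$ is a semi-invariant on $R(\a)$. It then remains to identify the character $g\mapsto \chi(\rho(g))$ with $\chi_{E\s}$.

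To build $\rho$, write $g=(g_v)_{v\in Q_0}\in G(\a)$ and define $\tilde g\in \Aut(P(\a))$ to be the block-diagonal automorphism that acts as $g_v$ on each summand $P(v)^{\a_v}$ under the identification $\Aut(P(v)^{\a_v})\supset Gl_{\a_v}(\k)$ from Proposition~\ref{Prop(1-4)}(\ref{AutG}). Using the decomposition
\[
P(\a-E^t\a)=\coprod_{(a:u\to v)\in Q_1}P(v)^{\a_u},
\]
define $\tilde g'\in \Aut(P(\a-E^t\a))$ to act as $g_u$ on the summand $P(v)^{\a_u}$ indexed by the arrow $a:u\to v$. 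Set $\rho(g):=(\tilde g,\tilde g'^{-1})$; a short check using the opposite-group convention shows that $\rho$ is a homomorphism.

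The compatibility $p_{gM}=\tilde g\, p_M\,\tilde g'^{-1}$ is a direct calculation from the definition of the canonical presentation in \S\ref{CPP}. On the summand $P(v)^{\a_u}$ corresponding to an arrow $a:u\to v$, the map $p_M$ has two components: the component into $P(u)^{\a_u}$ is $-incl_a^{\a_u}=-incl_a\otimes 1$, which is natural in $\k^{\a_u}$ and is therefore unchanged after conjugation by $g_u$ at the source and $g_u$ at the target, and the component into $P(v)^{\a_v}$ is $1\otimes M_a$, which after the same conjugation becomes $1\otimes(g_vM_ag_u^{-1})=1\otimes(gM)_a$. Thus $\tilde g\, p_M\,\tilde g'^{-1}=p_{gM}$ as required. (This is really just a restatement of the fact that the construction $M\mapsto p_M$ is functorial in $M$.)

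Finally, for the character computation, recall that with combined character $\chi_\s$ the semi-invariant $f$ satisfies $f((g^0,g^1)\varphi)=\prod_v\det(g^0_{v,v})^{\s_v}\det(g^1_{v,v})^{\s_v}\,f(\varphi)$. Applied to $\rho(g)=(\tilde g,\tilde g'^{-1})$, the first factor is $\prod_v\det(g_v)^{\s_v}$, while the diagonal block of $\tilde g'$ on the $v$-th projective summand is $\bigoplus_{a:u\to v}g_u$, hence has determinant $\prod_u\det(g_u)^{n_{uv}}$, where $n_{uv}$ is the number of arrows $u\to v$. Multiplying and collecting powers of $\det(g_u)$ gives
\[
\chi(\rho(g))=\prod_u\det(g_u)^{\s_u-\sum_v n_{uv}\s_v}=\prod_u\det(g_u)^{(E\s)_u}=\chi_{E\s}(g),
\]
since $(E\s)_u=\s_u+\sum_{w\ne u}E_{u,w}\s_w=\s_u-\sum_v n_{uv}\s_v$. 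The main obstacle is the compatibility step, which requires careful bookkeeping of the block structure of $\tilde g'$ dictated by the arrows of $Q$; everything else is then bookkeeping of determinants that precisely reproduces the Euler matrix.
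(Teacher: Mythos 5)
Your proposal is correct and follows essentially the same route as the paper: you construct the group homomorphism (the paper calls it $(\varphi_0,\varphi_1^{-1})$, you call it $\rho=(\tilde g,\tilde g'^{-1})$), verify $p_{gM}=\tilde g\,p_M\,\tilde g'^{-1}$ by functoriality of the canonical presentation, and then collect determinants to recover $E\s$. The only cosmetic difference is that you carry out the compatibility check summand-by-summand rather than via the paper's commutative-diagram display.
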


\begin{proof} 
By assumption on $f$ we know that
$f((g^0,g^1)p) = \chi_{\s}(g^0)\chi_{\s}(g^1) f(p)$
for all $(g^0,g^1)\in \Aut(P(\a))\times \Aut (P(\a-E^t\a))^{op}$, all 
$p\in R(\a,\a-E^t\a)= \Hom_Q(P(\a-E^t\a),P(\a))$ and
 for some combined character $\chi_{\s}$.
We want to show
$$(f\circ\z)(gM) = \chi_{E\s}(g) (f\circ \z)(M)$$
for all  $g=(g_v)\in Gl(\a)=\prod Gl_{\a_v}(\k)$, all 
$M\in R(\a)$ and
 for character $\chi_{E\s}$ .
 
 By definition the representation $gM$ consists of vector spaces $(gM)_v=M_v$ for all $v\in Q_0$ and $(gM)_{u\to v}= g_v\circ M_{u\to v} \circ g^{-1}_u$ for all $(u\to v) \in Q_1$ (\ref{Gacts}).

Then $\z(gM) = p_{gM}$, the canonical projective presentation of $gM$ fits in the following commutative diagram of $Q$-representations:

\[\begin{CD}
	P(\a-E^t\a)=\coprod_{u\to v}P(v)^{\a_u} @>\z(M)=p_M>> P(\a)=\coprod_v P(v)^{\a_v} @>>> M@>>> 0\\
	@VV{\varphi_1(g)}V @VV{\varphi_0(g)}V @VVgV\\
	P(\a-E^t\a)=\coprod_{u\to v} P(v)^{\a_u} @>\z(gM)=p_{gM}>>P(\a)= \coprod_v P(v)^{\a_v} @>>> gM@>>>0,
\end{CD}
\]\\
where $\varphi_0$ and $\varphi_1$ are defined in the following way: after identification 
$\prod_v Gl_{\a_v}(\k) = \prod_v \Aut(P(v)^{\a_v})$  as stated in Proposition \ref{Prop(1-4)}(3),  $\varphi_0(g) = g$ and $\varphi_1(g) = h$, where $h_w=\prod_{v\to w}g_v$.

Now, it follows from the above diagram that
\begin{equation}\label{equivariance of zeta}
(f\circ \z)(gM) = f(\z(gM)) = \varphi_0(g)\circ \z(M) \circ (\varphi_1(g))^{-1}.
\end{equation}
By the definition of the group action on $R(\a, \a-E^t\a)$ this equals
$$ f((\varphi_0(g),(\varphi_1(g))^{-1})\z(M)).$$ 
By the assumption that $f$ is semi-invariant of combined character $\chi_{\s}$ this equals
$$ \chi_{\s}(\varphi_0(g))\chi_{\s}(\varphi_1(g))^{-1} f(\z(M)).$$
Finally, by the definitions of $\varphi_0$ and $\varphi_1$ and the fact that these characters are given by determinants this equals
$$ \prod_v \det(g_v)^{\s_v} \cdot \prod_{w} \det(h_w)^{-\s_w} \cdot f(\z(M)) =$$
$$= \prod_v \det(g_v)^{\s_v} \cdot \prod_{v\to w}\det(g_v)^{-\s_w} \cdot f(\z(M)) =$$
$$= \prod_v \det(g_v)^{\s_v-\Sigma_{v\to w}\s_w} (f\circ\z)(M) =\prod_v \det(g_v)^{(E\s)_v} (f\circ\z)(M)=$$
$$= \chi_{E\s}(g)(f\circ\z)(M).$$

Thus, $f\circ\z$ is a semi-invariant on $R(\a)$ with character $\chi_{E\s}$.
\end{proof}
\begin{rem} \label{Indeterm} We note that $E\s$ may not determine $\s$ even though $E$ is invertible. The reason is that the weight $E\s$ of $f\cdot \z$ may have more indeterminacy then the weight $\s$ of $f$. $E\s$ and $\s$ have the same indeterminacy, i.e. $\s\to E\s$ maps  the weight coset of $f$ onto the weight coset of $f\cdot\z$, if and only if the support of $\a-E^t\a$ is contained in the support of $\a$.

\end{rem}

First we define the maps $\cc_V:R(\a,\a-E^t\a)\to \k$ which extend the semi-invariants $c_V:R(\a)\to\k$.

\begin{defn}
Let $\a\in\NN^n$  and $V$ be a representation such that $\<\a,\ul\dim V\>=0$. Define $\cc_V(\psi):=\det\Hom_Q(\psi,V)$ for $\psi\in R(\a,\a-E^t\a)$.
\end{defn}
\begin{rem}\label{rem:zeta sends Cv to cv}
Notice that $\cc_V(\z M)=\cc_V(p_M)= \det\Hom_Q(p_M,V)$ which is equal to $c_V(M)$ by definition of $c_V$. In other words, the composition
\[
	R(\a)\xrarrow{\z} R(\a,\a-E^t\a)\xrarrow{\cc_V} \k
\]
coincides with the classical semi-invariant $c_V$ on $R(\a)$ as in \ref {thm: FFT}.
 \end{rem}
 
\begin{lem} Let $\a\in\NN^n$  and $V$ be a representation such that $\<\a,\ul\dim V\>=0$. Then 
$\cc_V$ is a semi-invariant on $R(\a,\a-E^t\a)$ of combined character $\chi_{\ul\dim V}$.
\end{lem}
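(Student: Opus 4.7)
The plan is a direct functorial computation. First I need to verify that $\cc_V$ is a well-defined polynomial function on $R(\a,\a-E^t\a)$. Since $\Hom_Q(P(v),V)\cong V_v$, we have $\dim\Hom_Q(P(\g),V)=\g\cdot\ul\dim V$ for any $\g\in\NN^n$; hence the source $\Hom_Q(P(\a),V)$ and target $\Hom_Q(P(\a-E^t\a),V)$ of $\Hom_Q(\psi,V)$ have dimensions differing by $(E^t\a)\cdot\ul\dim V=\<\a,\ul\dim V\>=0$. So the map is square and its determinant is polynomial in $\psi$.

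Next, using contravariance of $\Hom_Q(-,V)$,
\[
\Hom_Q(g^0\psi g^1,V)=\Hom_Q(g^1,V)\circ\Hom_Q(\psi,V)\circ\Hom_Q(g^0,V).
\]
Taking determinants,
\[
\cc_V((g^0,g^1)\psi)=\det\Hom_Q(g^0,V)\cdot\cc_V(\psi)\cdot\det\Hom_Q(g^1,V),
\]
so everything reduces to identifying each factor $\det\Hom_Q(g^i,V)$ as $\chi_{\ul\dim V}(g^i)$.

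For this I would appeal to Proposition~\ref{Prop(1-4)}(2)--(3). Writing $g^0=(g^0_{uv})$ in the block triangular form given there, with diagonal blocks $g^0_{vv}\in\Aut(P(v)^{\a_v})\cong Gl_{\a_v}(\k)$, precomposition induces a likewise block triangular endomorphism of $\bigoplus_v V_v^{\a_v}$; the diagonal block on $V_v^{\a_v}\cong V_v\otimes\k^{\a_v}$ is the standard action of the matrix $g^0_{vv}\in Gl_{\a_v}(\k)$ on the second tensor factor, so has determinant $\det(g^0_{vv})^{\dim V_v}$. Multiplying over $v$ gives
\[
\det\Hom_Q(g^0,V)=\prod_v\det(g^0_{vv})^{\dim V_v}=\chi_{\ul\dim V}(g^0),
\]
and the identical computation applies to $g^1$. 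Thus both individual weights equal $\ul\dim V$, whence the combined weight is $\ul\dim V$ and the combined character is $\chi_{\ul\dim V}$.

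I do not foresee a serious obstacle. The only subtleties are keeping track of contravariance (which is why the two characters multiply rather than cancel, explaining the $\max$ that appears in the definition of combined weight) and the identification $\Aut(P(v)^{\a_v})\cong Gl_{\a_v}(\k)$ from Proposition~\ref{Prop(1-4)}(2) under which the block-triangular determinant factorization takes its stated form.
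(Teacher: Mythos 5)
Your argument is correct and matches the paper's, which defers the proof of this lemma to Proposition~\ref{weightprop} (the $\ZZ^n$ case): there too one factors $\det\Hom_Q(g^0\varphi g^1,V)$ via contravariance of $\Hom_Q(-,V)$ and evaluates each of $\det\Hom_Q(g^i,V)$ as $\prod_v\det(g^i_{vv})^{\dim V_v}$ using the block-triangular form of $\Aut(P(\g_i))$ from Proposition~\ref{Prop(1-4)}. Only your parenthetical about the ``$\max$'' is slightly off --- the $\max$ in the definition of combined weight handles indeterminacy of weights at vertices where one of $\g_{i,v}=0$, not contravariance --- but that does not affect the proof.
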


To avoid repetition we skip the proof of this lemma since the same statement is proved later in a more general setting for $\a\in\ZZ^n$  (\ref{weightprop}).

\begin{thm}\label{c_Vspangen}
If $\a\in\NN^n$ then the space $SI(\a , \a-E^t \a )_{\chi_{\s}}$ of semi-invariants on $R(\a,\a-E^t\a)$ of combined character $\chi_{\s}$ is spanned by the semi-invariants $\cc_V$ for all modules $V$ such that $\< \a, \ul\dim V\> = 0$ and $\ul\dim V=\s$.
\end{thm}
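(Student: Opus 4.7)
The plan is to lift the classical First Fundamental Theorem across the map $\z:R(\a)\to R(\a,\a-E^t\a)$ and use the density from Proposition \ref{orbit prop} to transfer the spanning statement to the presentation space. The forward inclusion is immediate: each $\cc_V$ with $\<\a,\ul\dim V\>=0$ and $\ul\dim V=\s$ lies in $SI(\a,\a-E^t\a)_{\chi_\s}$ by the preceding lemma, so the only task is to show these functions span.

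Given an arbitrary $f\in SI(\a,\a-E^t\a)_{\chi_\s}$, the first step is to pull back along $\z$: by Proposition \ref{wrelation}, $f\circ\z$ is a semi-invariant on $R(\a)$ of character $\chi_{E\s}$. Applying the classical FFT (Theorem \ref{thm: FFT}) together with Corollary \ref{cor to FFT}, I would write $f\circ\z=\sum_j \mu_j\, c_{V_j}$ with each $V_j$ satisfying $\<\a,\ul\dim V_j\>=0$, supported on the support of $\a$, and with $\chi_{E\ul\dim V_j}=\chi_{E\s}$ as characters of $G(\a)$. Using the upper-triangular structure of $E$ and adjusting the $V_j$ at vertices where $\a_v=0$, I would arrange that in fact $\ul\dim V_j=\s$ on the nose.

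By Remark \ref{rem:zeta sends Cv to cv}, $\cc_{V_j}\circ\z=c_{V_j}$, so the difference $h:=f-\sum_j \mu_j\,\cc_{V_j}$ is a semi-invariant of combined character $\chi_\s$ on $R(\a,\a-E^t\a)$ which vanishes on $\im(\z)$. Equivariance then forces $h$ to vanish on the entire $\Aut P(\a)\times\Aut P(\a-E^t\a)^{op}$-orbit of $\im(\z)$, which is open and dense by Proposition \ref{orbit prop}. Since a polynomial vanishing on a dense open set is identically zero, $h\equiv 0$ and $f=\sum_j\mu_j\,\cc_{V_j}$.

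The main obstacle I foresee is the adjustment step in which one passes from $\chi_{E\ul\dim V_j}=\chi_{E\s}$ (an equality of characters of $G(\a)$, hence only determined at vertices where $\a_v\neq 0$) to the pointwise equality $\ul\dim V_j=\s$. This requires reconciling the indeterminacy of the character on $R(\a)$ with the tighter rigidity of the combined character on the presentation space, where Proposition \ref{all si are p-symmetric} forces $\s^0_v=\s^1_v$ whenever both $\g_{0,v}$ and $\g_{1,v}$ are nonzero. A careful induction on vertices, ordered by the triangular structure of $E$, should allow one to replace each $V_j$ by a representation of the prescribed dimension vector $\s$ without altering the relation $f\circ\z=\sum_j\mu_j\,c_{V_j}$.
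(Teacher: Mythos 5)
Your proposal follows the same route as the paper: pull back along $\z$, invoke the classical FFT on $R(\a)$, and use the density of the $\Aut P(\a)\times \Aut P(\a-E^t\a)^{op}$-orbit of $\im\z$ (Proposition \ref{orbit prop}) to transfer the spanning statement back to the presentation space. The density/vanishing argument at the end is fine and matches the paper in substance.

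However, the ``adjustment step'' you flag as the main obstacle is a genuine gap, not merely a detail. You propose to ``replace each $V_j$ by a representation of the prescribed dimension vector $\s$'' via ``a careful induction on vertices,'' but modules are not determined by dimension vectors, and arbitrarily inflating $V_j$ at vertices where $\a_v=0$ will generally change $\cc_{V_j}$ as a function on $R(\a,\a-E^t\a)$ (even though $c_{V_j}=\cc_{V_j}\circ\z$ would be unchanged). The paper's solution is a concrete trick your sketch does not supply: set $\g_j := E\s - E\ul\dim V_j$, observe that $\g_j\in\NN^n$ (since $\chi_{E\s}=\chi_{E\ul\dim V_j}$ forces agreement on $\mathrm{supp}(\a)$ and, by Corollary \ref{cor to FFT}, $(E\ul\dim V_j)_v\le 0$ off the support), and replace $V_j$ by $V_j\oplus I(\g_j)$. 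Then $\ul\dim(V_j\oplus I(\g_j)) = \ul\dim V_j + E^{-1}\g_j = \s$ exactly, and $c_{I(\g_j)}\equiv 1$ on $R(\a)$ since $\g_j$ is supported off $\mathrm{supp}(\a)$, so $c_{V_j\oplus I(\g_j)}=c_{V_j}$. Until you identify this (or an equivalent) mechanism, the passage from ``characters agree'' to ``$\ul\dim V_j=\s$ on the nose'' remains unproved.
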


\begin{proof} %
Let $f$ be a semi-invariant on $R(\a,\a-E^t\a)$ of weight $\s\in\NN^n$. Then
\[
	f((g^0,g^1)\z(M))=\chi_\s(g^0)\chi_\s(g^1)(f\circ\z)(M).
\]
By \ref{wrelation}, $f\circ\z$ is a semi-invariant on $R(\a)$ of weight $E\s$. Proposition \ref{orbit prop} implies that the general element of $R(\a,\a-E^t\a)$ has the form $(g^0,g^1)\z(M)$ where $M\in R(\a)$. Therefore, the above formula shows that $f$ is determined by $f\circ\z\in SI(Q,\a)$ and the weight $\s$. 
So, it suffices to find a linear combination of $C_V$'s of weight $\s$ so that the corresponding
linear combination of $c_V$'s is equal to $f\circ\z$.

By the First Fundamental Theorem (\ref{thm: FFT}), $f\circ\z$ is a linear combination of semi-invariants $c_{V_i}$ of weight $E\s$ where we may assume that each $V_i$ has support contained in the support of $\a$. Since $\chi_{E\s}=\chi_{E\ul\dim V_i}$, we have that
\begin{enumerate}
\item $(E\s)_v=(E\ul\dim V_i)_v$ for all $v$ in the support of $\a$ and
\item $(E\ul\dim V_i)_v=-\sum_{v\to w}\dim (V_i)_w\le0$ if $\a_v=0$.
\end{enumerate}
Therefore,
\[
	\g_i=E\s-E\ul\dim V_i\in\NN^n
\]
for each $V_i$. Let $I(\g_i)$ be the injective module with socle $S(\g_i)$. Then $\ul\dim I(\g_i)=E^{-1}\g_i$. So, $C_{V_i\oplus I(\g_i)}$ is a semi-invariant of $R(\a,\a-E^t\a)$ of weight
\[
	\ul\dim V_i+\ul\dim I(\g_i)=\s.
\]
Furthermore, $C_{V_i\oplus I(\g_i)}=c_{V_i\oplus I(\g_i)}=c_{V_i}$ since $c_{I(\g_i)}=1$ on $R(Q,\a)$. Therefore, $f$ is a linear combination of these determinantal semi-invariants.
\end{proof}

\begin{cor}\label{iso of rings of sis}
Let $\a\in \NN^n$.
There is an isomorphism of rings of semi-invariants
\[
	SI^{(\a,\a-E^t\a)}(Q,\a) \cong SI(Q,\a)
\]which sends $C_V$ to $c_V$ if and only if the support of $\a-E^t\a$ is contained in the support of $\a$.
\end{cor}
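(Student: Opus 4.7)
The plan is to take the pullback $\zeta^*\colon SI^{(\a,\a-E^t\a)}(Q,\a)\to SI(Q,\a)$, $f\mapsto f\circ\zeta$, as the natural candidate for the isomorphism. By Remark~\ref{rem:zeta sends Cv to cv} this already satisfies $\zeta^*(C_V)=c_V$, and since the $C_V$ span $SI^{(\a,\a-E^t\a)}(Q,\a)$ by Theorem~\ref{c_Vspangen}, any ring homomorphism sending $C_V$ to $c_V$ must coincide with $\zeta^*$. So the corollary will reduce to deciding when $\zeta^*$ is bijective. First I would dispatch surjectivity: by Theorem~\ref{thm: FFT} together with Corollary~\ref{cor to FFT}, $SI(Q,\a)$ is spanned by the $c_V=\zeta^*(C_V)$, so $\zeta^*$ is always surjective.

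Next I would analyze injectivity. On each homogeneous combined-character piece, $\zeta^*$ is always injective: if a semi-invariant $f$ satisfies $f\circ\zeta=0$, then $f$ vanishes on $\im\zeta$, hence on its $\Aut(P(\a))\times\Aut(P(\a-E^t\a))^{op}$-orbit by semi-invariance, which by Proposition~\ref{orbit prop} is dense in $R(\a,\a-E^t\a)$, forcing $f=0$. The only remaining way $\zeta^*$ can fail to be injective is through cancellation between images of distinct combined-character pieces on the left that happen to land in the same character piece on the right. By Proposition~\ref{wrelation}, $\zeta^*$ sends $\chi_\sigma$ to $\chi_{E\sigma}$, and Remark~\ref{Indeterm} pins down exactly when this induces a bijection on weight cosets: precisely when $\mathrm{supp}(\a-E^t\a)\subseteq\mathrm{supp}(\a)$. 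Under the support hypothesis, $\zeta^*$ will therefore decompose as a direct sum of injections into disjoint homogeneous pieces of the target, so it will itself be injective, and combined with surjectivity this produces the desired isomorphism.

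For the converse I would assume the support condition fails and pick a vertex $v$ with $\a_v=0$ and $(\a-E^t\a)_v\neq 0$. The indecomposable injective $I(v)$ satisfies $\<\a,\ul\dim I(v)\>=\a_v=0$, so $C_{I(v)}$ is a well-defined semi-invariant on $R(\a,\a-E^t\a)$ whose combined weight $\ul\dim I(v)$ is nontrivial and detected at the vertex $v$ (where $(\a-E^t\a)_v\neq 0$ makes the combined character depend on the coordinate at $v$); thus $C_{I(v)}$ sits in a combined-character piece distinct from that of the constant $1$. Nevertheless, by the argument recorded in the proof of Theorem~\ref{c_Vspangen}, $\zeta^*(C_{I(v)})=c_{I(v)}=1$ on $R(\a)$, so $C_{I(v)}-1$ will be a nonzero element of $\ker\zeta^*$, exhibiting the failure of $\zeta^*$ to be an isomorphism. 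The main obstacle I expect is verifying that $C_{I(v)}$ genuinely lies in a combined-character piece distinct from $1$ and is nonzero as a polynomial; both points reduce to the observation that the failure of the support condition is exactly the slack that lets $\chi_{\ul\dim I(v)}$ differ nontrivially from the trivial character on the left while $c_{I(v)}$ remains identically $1$ on $R(\a)$.
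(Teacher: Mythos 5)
Your argument follows essentially the same route as the paper: take $\zeta^*$ as the candidate, get surjectivity from the FFT and Theorem~\ref{c_Vspangen}, get per-weight-piece injectivity from openness/density of the orbit of $\im\zeta$ (Proposition~\ref{orbit prop} / Lemma~\ref{lem}), promote this to global injectivity via the bijection on weight cosets from Remark~\ref{Indeterm} under the support hypothesis, and for the converse exhibit $C_{I(v)}$ as a nonconstant (indeed nonzero, since it pulls back to $c_{I(v)}=1$) element with nontrivial combined weight that $\zeta^*$ collapses to $1$. The one genuine addition you make — observing that since the $C_V$ span the source, \emph{any} map sending $C_V\mapsto c_V$ must coincide with $\zeta^*$, so the "if and only if" statement really is about $\zeta^*$ alone — is a useful clarification the paper leaves implicit, but it does not change the proof strategy.
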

\begin{proof} The mapping of rings is given by the mapping
\[
	\z:R(\a)\to R(\a-E^t\a)
\]
which is equivariant with respect to the group homomorphism
\[
	(\f_0,\f_1^{-1}):G(\a)\to \Aut(P(\a))\times\Aut(P(\a-E^t\a))^{op}
\]
by Equation \ref{equivariance of zeta} in the proof of Proposition \ref{wrelation}. Therefore, $\z$ induces a homomorphism of rings of semi-invariants
\[
	\z^\ast:SI^{(\a,\a-E^t\a)}(Q,\a) \cong SI(Q,\a)
\]
By Proposition \ref{wrelation} and Remark \ref{Indeterm} this ring homomorphism is graded, sending semi-invariants of weight $\s$ to semi-invariants of weight $E\s$ and this is a 1-1 correspondence of weight cosets when the support of $\a-E^t\a$ is contained in the support of $\a$.
Therefore, it suffices to show that $\z^\ast$ induces an isomorphism
\[
	\z^\ast:SI^{(\a,\a-E^t\a)}(Q,\a)_\s \cong SI(Q,\a)_{E\s}
\]
By Remark \ref{rem:zeta sends Cv to cv}, $\z^\ast$ sends $C_V$ to $c_V$. By the First Fundamental Theorem \ref{thm: FFT}, $SI(Q,\a)$ is spanned by the functions $c_V$ for all representations $V$ with $\langle \a,\ul\dim V\rangle =0$. Theorem \ref{c_Vspangen} above tells us that $SI^{(\a,\a-E^t\a)}(Q,\a)_\s$ is spanned by the corresponding $C_V$'s. Remark \ref{Indeterm} assures us that $C_V$ has weight $\s$. Therefore $\z^\ast$ is onto.

To show that $\z^\ast$ is 1-1 take any element $f\in SI^{(\a,\a-E^t\a)}(Q,\a)_\s$ in the kernel of $\z^\ast$. Then $f$ is a semi-invariant which is trivial on $R(\a)$. But the orbit of $\z(R(\a))$ is open by Lemma \ref{lem}. Therefore, $f$ is zero on an open set. So, $f$ must be identically zero. So, $\z^\ast$ is an isomorphism as claimed.
Coversely, suppose there is a vertex $v$ in the support of $\a-E^t\a$ so that $\a_v=0$.  In that case we take $V=I(v)$ the injective envelope of the simple at $v$. Then $c_V=1$ but $C_V$ is not constant. So, the rings of semi-invariants are not isomorphic in this case.
\end{proof}


\section{Presentation spaces and their semi-invariants  for vectors $\a\in\ZZ^n$}\label{sec5}
In this section we return to study presentation spaces of arbitrary dimension vectors. First we prove existence of determinantal semi-invariants for all presentation spaces. In preparation for the virtual generic decomposition theorem, it is instructive to define and prove existence of the particular projective decomposition of $\a\in\ZZ^n$, called the \emph{canonical projective decomposition} (definition \ref{Rcan}).


\subsection{ Determinantal semi-invariants}
We now concentrate on the semi-invariants on presentation spaces which are defined using determinants and
 determine their weights.
 Only later, we will show that the rings of all semi-invariants on presentation spaces are spanned by the determinants.

The following lemma is clear for the non-negative integral vectors $\a\in \NN^n$ from \ref{fact}, however it is true for all integral vectors $\a \in \ZZ^n$.

\begin{lem} \label{MatSqu} Let $\a \in \ZZ ^n$ and let $V$ be a $Q$-representation. Then $\langle\a,\ul{\dim}V\rangle = 0$ if and only if:
$\Hom_Q (\varphi, V): \Hom_Q(P(\g_0),V) \to \Hom_Q(P(\g_1),V)$ is a square matrix for any presentation $\varphi \in R(\g_0,\g_1)$ and for any projective decomposition $E^t \a = \g_0 - \g_1$ of $\a$.
\end{lem}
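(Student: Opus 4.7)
The plan is to reduce the statement to a purely dimensional computation via the classical identification $\Hom_Q(P(v), V) \cong V_v$. Concretely, for an indecomposable projective $P(v) = \k Q e_v$ and any representation $V$, evaluation at $e_v$ gives a natural isomorphism $\Hom_Q(P(v), V) \cong V_v$. Taking direct sums, we get
\[
\dim_\k \Hom_Q(P(\g), V) = \sum_{v \in Q_0} \g_v \dim_\k V_v = \g \cdot \ul{\dim} V
\]
for any $\g \in \NN^n$.

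First I would apply this to both source and target. The domain $\Hom_Q(P(\g_0), V)$ has $\k$-dimension $\g_0 \cdot \ul{\dim} V$ and the codomain $\Hom_Q(P(\g_1), V)$ has $\k$-dimension $\g_1 \cdot \ul{\dim} V$. Thus, for any $\varphi \in R(\g_0, \g_1)$, the induced linear map $\Hom_Q(\varphi, V)$ is represented by a matrix of size $(\g_1 \cdot \ul{\dim} V) \times (\g_0 \cdot \ul{\dim} V)$, which is square precisely when $(\g_0 - \g_1) \cdot \ul{\dim} V = 0$.

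Next I would translate this into the Euler form. Since $(\g_0, \g_1)$ is a projective decomposition of $\a$, we have by definition (see subsection \ref{PD}) $\g_0 - \g_1 = E^t \a$. Therefore
\[
(\g_0 - \g_1) \cdot \ul{\dim} V = (E^t \a) \cdot \ul{\dim} V = \a^t E \ul{\dim} V = \langle \a, \ul{\dim} V \rangle.
\]
This gives both directions at once: the matrix is square iff $\langle \a, \ul{\dim} V\rangle = 0$, and moreover this condition is independent of the chosen projective decomposition and the chosen $\varphi$, as the statement claims.

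I do not anticipate any real obstacle; the only subtlety worth flagging is that the argument uses only the dimension identity $\dim_\k \Hom_Q(P(\g), V) = \g \cdot \ul{\dim} V$, which is valid for all $\g \in \NN^n$ regardless of whether $\a$ itself has negative entries. Consequently the lemma extends the well-known $\NN^n$ case (Remark \ref{fact}(1) together with $\Ext^1_Q(P, V) = 0$ for projective $P$) to arbitrary $\a \in \ZZ^n$ without any change in the underlying argument.
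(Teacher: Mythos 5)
Your proof is correct and follows essentially the same route as the paper: both reduce the claim to the dimension count $\dim_\k \Hom_Q(P(\g),V) = \g \cdot \ul{\dim}V$ and then use $\g_0 - \g_1 = E^t\a$ to rewrite $(\g_0 - \g_1)\cdot\ul{\dim}V$ as $\langle\a,\ul{\dim}V\rangle$.
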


\begin{proof}
$\langle\a,\ul{\dim}V\rangle = \a^t E \ul\dim V=(\g_0 -\g_1)^t \ul\dim V = (\g_0)^t \ul\dim V- (\g_1)^t \ul\dim V = \dim_{\k} \Hom_Q(P(\g_0),V) - \dim_{\k} \Hom_Q(P(\g_1),V).$
It follows that 
$\langle\a,\ul\dim V\rangle =0$ if and only if the matrix $\Hom_Q(\varphi,V)$ is square (not necessarily invertible). The dimensions of the matrix are $(\Sigma_{v\in Q_0}\dim V_v \cdot \g_{1,v}) \times (\Sigma_{v\in Q_0}\dim V_v \cdot \g_{0,v})$ since $\dim \Hom_Q(P(\g_i),V) = \Sigma_{v\in Q_0}\dim V_v \cdot \g_{i,v}$ for $i=0,1$. (For more detailed description of this matrix see the proof of Proposition \ref{weightprop}.)
\end{proof}

\begin{defn}\label{detsemi} Let  $\a\in\ZZ^n$ and $V$ a $Q$-representation such that $\langle\a,\ul{\dim}V\rangle=0$. For any projective decomposition $E^t \a = \g_0-\g_1$ of $\a$ we define, on the presentation space $R(\g_0 ,\g_1)$, the function
$$\cc^{(\g_0,\g_1)}_V:= \det (\Hom_Q(\ ,V)).$$ 

\end{defn}

\begin{prop}\label{weightprop}
Let  $\a\in\ZZ^n$ and $V$ a $Q$-representation with $\langle\a,\ul{\dim}V\rangle=0$.
\begin{enumerate}
\item  The functions $\cc^{(\g_0,\g_1)}_V$ are semi-invariants for all projective decompositions $(\g_0,\g_1)$ of $\a$.
\item The weight of $\cc_V^{(\g_0,\g_1)}$ is $(\chi_{\ul\dim V},\chi_{\ul\dim V})$.
\end{enumerate}
\end{prop}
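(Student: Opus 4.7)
The plan is as follows. First, by Lemma~\ref{MatSqu} the hypothesis $\langle\a,\ul\dim V\rangle=0$ is exactly what makes $\Hom_Q(\varphi,V)$ square, so $\cc_V^{(\g_0,\g_1)}(\varphi)=\det\Hom_Q(\varphi,V)$ is a well-defined polynomial function on $R(\g_0,\g_1)$. For part~(1), I would apply the contravariant functoriality of $\Hom_Q(-,V)$ to the group action $(g^0,g^1)\varphi=g^0\varphi g^1$, obtaining
$$\Hom_Q(g^0\varphi g^1,V)=\Hom_Q(g^1,V)\circ\Hom_Q(\varphi,V)\circ\Hom_Q(g^0,V).$$
All three factors on the right are endomorphisms of square matrices of the common size $\dim_\k\Hom_Q(P(\g_0),V)=\dim_\k\Hom_Q(P(\g_1),V)$, so taking determinants and using multiplicativity yields
$$\cc_V^{(\g_0,\g_1)}\bigl((g^0,g^1)\varphi\bigr)=\det\Hom_Q(g^0,V)\cdot\det\Hom_Q(g^1,V)\cdot\cc_V^{(\g_0,\g_1)}(\varphi).$$
Thus $\cc_V^{(\g_0,\g_1)}$ is a semi-invariant, and its character on each factor $\Aut(P(\g_i))$ is the function $g\mapsto\det\Hom_Q(g,V)$.

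For part~(2) it therefore suffices to prove the identity $\det\Hom_Q(g,V)=\chi_{\ul\dim V}(g)=\prod_v\det(g_{vv})^{\dim V_v}$ for every $g\in\Aut(P(\g))$. Using the total ordering on $Q_0$ from Section~\ref{EF}, the hom-space $\Hom_Q(P(v),P(u))=e_v\k Qe_u$ vanishes unless there is a path from $u$ to $v$, i.e.\ unless $u\le v$. Hence the block matrix $(g_{uv})$ of $g$, with $g_{uv}\colon P(v)^{\g_v}\to P(u)^{\g_u}$, is triangular with diagonal blocks $g_{vv}\in\Aut(P(v)^{\g_v})\cong Gl_{\g_v}(\k)$ as recorded in Proposition~\ref{Prop(1-4)}(2). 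Applying the contravariant additive functor $\Hom_Q(-,V)$ carries this to a triangular endomorphism of $\Hom_Q(P(\g),V)=\bigoplus_v V_v^{\g_v}$, with the ordering reversed but with the same diagonal blocks $\Hom_Q(g_{vv},V)$; consequently
$$\det\Hom_Q(g,V)=\prod_{v\in Q_0}\det\Hom_Q(g_{vv},V).$$

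The final step is a routine diagonal computation: under the natural identification $\Hom_Q(P(v)^{\g_v},V)\cong V_v\otimes_\k\k^{\g_v}$, precomposition with $g_{vv}\in Gl_{\g_v}(\k)$ acts as $\mathrm{id}_{V_v}\otimes g_{vv}^t$, whose determinant is $\det(g_{vv})^{\dim V_v}$. Multiplying over $v\in Q_0$ then gives $\det\Hom_Q(g,V)=\chi_{\ul\dim V}(g)$, and substituting this back into the character computed in part~(1) yields the pair $(\chi_{\ul\dim V},\chi_{\ul\dim V})$ claimed in~(2). There is no genuine obstacle; the only bookkeeping point worth flagging is the order reversal induced by the contravariance of $\Hom_Q(-,V)$, but since triangularity is preserved and only diagonal blocks contribute to the determinant, this has no effect on the final formula.
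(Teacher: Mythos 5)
Your proof is correct and follows essentially the same route as the paper's: both reduce to the functorial identity $\Hom_Q(g^0\varphi g^1,V)=\Hom_Q(g^1,V)\circ\Hom_Q(\varphi,V)\circ\Hom_Q(g^0,V)$, take determinants, and compute $\det\Hom_Q(g,V)=\prod_v\det(g_{vv})^{\dim V_v}$ by exploiting the block-triangular structure of $g\in\Aut(P(\g))$ (the paper states this more tersely, noting only that each diagonal block $g_{vv}$ "occurs $\dim V_v$ times"). Your extra care about the order-reversal under $\Hom_Q(-,V)$ and the transpose in the diagonal blocks is harmless bookkeeping that does not change the determinant, as you note.
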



\begin{proof} 

(1) Consider a presentation: 
$\varphi \in R(\g_0,\g_1)=\Hom_Q (P(\g_1),P(\g_0)),$
$$P(\g_1)=\coprod_{v\in Q_0} P(v)^{\g_{1,v}}\ \xrarrow{\varphi}\ \ P(\g_0)=\coprod_{v\in Q_0} P(v)^{\g_{0,v}}.$$
The map $\varphi$ is given by a matrix of size:
$(\sum_{v\in Q_0} \g_{0,v})\times (\sum_{v\in Q_0}\g_{1,v})$ with entries in $\Hom_Q(P(v),P(u))=\prod_{p:u\to v}\k$, the vector space generated by all directed paths $p:u\to v$.
For each pair of vertices $u,v$ of $Q$ and all paths $p: u\to v$,
let $\varphi_{uv,p}=\varphi_p$ denote the $\g_{0,u}\times \g_{1,v}$-matrix with coefficients in $\k$, corresponding to the $p$ coordinate of the composition:
$$P(v)^{\g_{1,v}} \xrightarrow{incl} P(\g_1)\xrightarrow{\varphi} P(\g_0) \xrightarrow{proj} P(u)^{\g_{0,u}}.$$
Then the matrix representing the map:
$$\Hom_Q(\varphi,V): \Hom_Q(P(\g_0),V) \to \Hom_Q(P(\g_1),V) $$
is a block matrix with blocks of size $(\dim V_v \cdot \g_{1,v})\times(\dim V_u \cdot \g_{0,u})$ with coefficients in $\k$: 
$$(\Hom_Q(\varphi,V))_{vu}=\sum_{p:u\to v}\Hom_{\k}(\varphi_{p},\k)\otimes_{\k}V_{p} =\sum_{p:u\to v} \varphi_{p}^{*}\otimes_{\k}V_{p},$$
where $V_{p}: V_u\to V_v$
is the map induced by the representation $V$.
\vs2
The fact that $\langle\a,\ul{\dim}V\rangle =0$ implies that the matrix $\Hom_Q(\varphi, V)$ is a square matrix by Lemma \ref{MatSqu}. Hence the determinant $\det(\Hom_Q(\varphi,V))$ is defined, and therefore $\cc^{(\g_0,\g_1)}_V$ is a polynomial function on $R(\g_0,\g_1)$.
To show that $\cc^{(\g_0,\g_1)}_V$ is a semi-invariant, we need to show that:
$$\cc^{(\g_0,\g_1)}_V((g^0,g^1)\varphi)=\chi(g^0,g^1)\cc^{(\g_0,\g_1)}_V(\varphi)$$
for some character $\chi$. Using the properties of characters from \ref{Char}, we have:

$$\cc^{(\g_0,\g_1)}_V((g^0,g^1)\varphi)=\det\Hom_Q((g^0,g^1)\varphi,V) = 
\det\Hom_Q(g^0 \varphi g^1,V) =$$

$$=(\det\Hom_Q(g^0,V))\cdot (\det\Hom_Q(\varphi,V))\cdot(\det\Hom_Q(g^1,V)).$$

Note that, in the matrix $\Hom_Q(g^i,V)$, we have that $g^i_{vv}$ is a $\g_{i,v}\times\g_{i,v}$ matrix which occurs $\dim V_v$ times, for $i=0,1$. So the above is equal to:

$$(\prod_{v \in Q_0}\det(g^0_{vv})^{\dim V_v})
\cdot (\cc^{(\g_0,\g_1)}_V(\varphi)) \cdot
(\prod_{v\in Q_0}\det(g^1_{vv})^{\dim V_v})=$$
$$=\chi^0_{\ul\dim V}(g^0)\cdot \cc^{(\g_0,\g_1)}_V(\varphi) \cdot \chi^1_{\ul\dim V}(g^1)=\chi_{(\ul\dim V,\ul\dim V)}(g^0,g^1)\cdot \cc^{(\g_0,\g_1)}_V(\varphi).$$
 \end{proof}

We now consider all projective decompositions $(\g_0,\g_1)$ of $\a$ in the directed poset $PD(\a)$, and show under which conditions the determinantal semi-invariants are nonzero.
\begin{prop}\label{homext}
Let $\a\in\ZZ^n$,  the  and let $V$ be a representation. Then the following statements are equivalent:
\begin{enumerate}
\item[i.] There exists a projective decomposition of $\a$, $(\g_0,\g_1)\in PD(\a)$ such that $\cc^{(\g_0,\g_1)}_V$ is a non-zero semi-invariant on $R(\g_0,\g_1)$.

\item[ii.] There exists a module $M$ and a projective module $P$ such that: 
(1) $\a=\ul\dim M-\ul\dim P$,
(2) $\Hom_Q(P,V)=0$,
(3) $\Hom_Q(M,V)=0$ and
(4) $\Ext_Q(M,V)=0$.
\end{enumerate}
\end{prop}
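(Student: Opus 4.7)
\medskip

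\textbf{Proof Proposal.} The plan is to prove both implications by the same basic device: relate $\cc^{(\g_0,\g_1)}_V(\varphi) = \det\Hom_Q(\varphi,V)$ to the vanishing of $\Hom_Q$ and $\Ext^1_Q$ groups attached to $\ker\varphi$ and $\coker\varphi$, exploiting the fact that $\k Q$ is hereditary.

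\smallskip

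For the direction $(ii)\Rightarrow(i)$, start with a module $M$ and projective $P$ satisfying (1)--(4). Since $\k Q$ is hereditary, $M$ has a projective resolution $0\to P_1\xrarrow{\iota} P_0\to M\to 0$. Set $P(\g_0):=P_0$ and $P(\g_1):=P_1\smallcoprod P$ (the required identifications on $\g_0,\g_1$ coming from Remark \ref{fact}); by (1) this gives a projective decomposition of $\a$. Define $\varphi\in R(\g_0,\g_1)$ as $(\iota,0):P_1\smallcoprod P\to P_0$. Applying $\Hom_Q(-,V)$ to the resolution and using that $\Hom_Q(P,V)=0$, the matrix $\Hom_Q(\varphi,V)$ splits as the block
\[
\Hom_Q(P_0,V)\xrarrow{\Hom_Q(\iota,V)}\Hom_Q(P_1,V)\dirsum \Hom_Q(P,V)=\Hom_Q(P_1,V),
\]
so $\ker\Hom_Q(\varphi,V)=\Hom_Q(M,V)=0$ and $\coker\Hom_Q(\varphi,V)=\Ext^1_Q(M,V)=0$ by (3), (4). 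Hence $\cc^{(\g_0,\g_1)}_V(\varphi)\neq 0$.

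\smallskip

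For $(i)\Rightarrow(ii)$, take $(\g_0,\g_1)\in PD(\a)$ and $\varphi\in R(\g_0,\g_1)$ with $\cc^{(\g_0,\g_1)}_V(\varphi)\neq 0$. Let $K=\ker\varphi$, $I=\im\varphi$, $M=\coker\varphi$, yielding $0\to K\to P(\g_1)\to I\to 0$ and $0\to I\to P(\g_0)\to M\to 0$. The key point is that since $\k Q$ is hereditary, the submodule $I\subseteq P(\g_0)$ is projective, and therefore so is $K\subseteq P(\g_1)$. Thus by additivity of dimension vectors,
\[
\a=\ul\dim P(\g_0)-\ul\dim P(\g_1)=\ul\dim M-\ul\dim K,
\]
so we will take $P:=K$. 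Applying $\Hom_Q(-,V)$ to the two short exact sequences and using projectivity of $I$ and $P(\g_0)$ (so that the $\Ext^1$ from these vanishes), we get two four-term exact sequences that splice together to express both $\ker\Hom_Q(\varphi,V)$ and $\coker\Hom_Q(\varphi,V)$ in terms of $\Hom_Q(M,V)$, $\Ext^1_Q(M,V)$, and $\Hom_Q(K,V)$. A direct dimension count (using $\<\a,\ul\dim V\>=0$ from Lemma \ref{MatSqu}) identifies these groups as direct summands of the kernel and cokernel. Since $\cc^{(\g_0,\g_1)}_V(\varphi)\neq 0$ forces $\Hom_Q(\varphi,V)$ to be an isomorphism, all three of $\Hom_Q(M,V)$, $\Ext^1_Q(M,V)$, and $\Hom_Q(K,V)=\Hom_Q(P,V)$ vanish, as required.

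\smallskip

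The main obstacle is the bookkeeping in $(i)\Rightarrow(ii)$: without the hereditary property, the cokernel of $\Hom_Q(\varphi,V)$ only gives an extension involving $\Ext^1_Q(I,V)$ rather than a clean identification with $\Hom_Q(K,V)\dirsum \Ext^1_Q(M,V)$. Once one observes that $I$ (and hence $K$) is projective, the $\Ext^1_Q(I,V)$ term drops out and the four conditions in (ii) fall out cleanly from the long exact sequence analysis.
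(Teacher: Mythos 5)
Your proof is correct and follows essentially the same approach as the paper: for $(ii)\Rightarrow(i)$ both construct $\varphi=(0,\psi)$ on $P\smallcoprod P_1\to P_0$ and read off mono/epi from the long exact sequence, and for $(i)\Rightarrow(ii)$ both take $P:=\ker\varphi$, $M:=\coker\varphi$. You simply spell out in detail the "easy to check" step of $(i)\Rightarrow(ii)$ — in particular the use of hereditariness to make $\im\varphi$ and $\ker\varphi$ projective so the $\Ext^1$ of the image drops out — which the paper leaves to the reader.
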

\begin{proof} (i $\then$ ii) Let $(\g_0,\g_1)$ be a projective decomposition of $\a$ such that $\cc^{(\g_0,\g_1)}_V = \det \Hom_Q(\ ,V)$ is a non-zero semi-invariant on $R(\g_0,\g_1)$ and let $\varphi$ be a general element of $R(\g_0,\g_1)$. Consider the exact sequence:
$$0\to Ker(\varphi) \to P(\g_1) \xrightarrow{\varphi} P({ \g_0}) \to Coker(\varphi) \to 0,$$
and let $M:=Coker(\varphi)$ and $P:=Ker(\varphi)$. It is easy to check that $M$ and $P$ satisfy conditions 1, 2, 3, 4.

 (ii $\then$ i)
Given $P,M$ satisfying conditions 1, 2, 3, 4, let $P_1\xrarrow{\psi} P_0\to M\to 0$ be a projective resolution of $M$, let $$\g_0 = \ul \dim(P_0/\text{rad}P_0)\text{ and }
\g_1 =\ul\dim((P_1\smallcoprod P)/\text{rad}(P_1\smallcoprod P)).$$
Consider the presentation space:
\[
	R(\g_0,\g_1)=\Hom_Q(P\smallcoprod P_1,P_0)=\Hom_Q(P,P_0)\times \Hom_Q(P_1,P_0),
\]
and let $\varphi: =(0,\psi)\in R(\g_0,\g_1)$. Then the mapping
\[
	\Hom_Q(\varphi,V)=\Hom_Q((0,\psi),V):\Hom_Q(P_0,V)\to \Hom_Q(P\smallcoprod P_1,V)
\]
is a monomorphism by (3) and an epimorphism by (2) and (4). Consequently, $\det\Hom_Q(\varphi,V)\neq0$, i.e. $\cc^{(\g_0,\g_1)}_V\neq0$. 
\end{proof}

\subsection{ Stability in presentation spaces}

This subsection is devoted to investigating the general elements in the presentation spaces.
We prove the Stability Theorem, which asserts that the general element in the presentation space is homotopically equivalent to
an element in the space corresponding to a minimal decomposition of $\a$. 

We recall that the direct sum of homomorphisms gives a mapping
\[
	\smallcoprod:R(\g_0,\g_1)\smallcoprod R(\g_0',\g_1')\to R(\g_0+\g_0',\g_1+\g_1').
\]

\begin{defn}\label{StabMaps}
We define the \emph{stabilization maps} for any $\g_0,\g_1,\g\in\NN^n$
$$St_{\g}^{(\g_0,\g_1)}:R(\g_0,\g_1) \to R(\g_0+\g,\g_1+\g)$$
as $\,St_{\g}^{(\g_0,\g_1)}(\varphi):=\varphi\coprod 1_{P(\g)}$ for each $\varphi \in R(\g_0,\g_1)=\Hom_Q(P(\g_1),P(\g_0))$.
\end{defn}

\begin{thm}[Stability theorem]\label{thm:stability theorem}
Given any projective decomposition $E^t\a=\g_0-\g_1$ of $\a\in\ZZ^n$, the general element of $R(\g_0,\g_1)$ is isomorphic to an element in the image of the stabilization map $R^{min}(\a)\to R(\g_0,\g_1)$.
\end{thm}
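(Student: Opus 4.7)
The plan is to bring a general element $\varphi\in R(\g_0,\g_1)$, via the action of $\Aut(P(\g_0))\times\Aut(P(\g_1))^{op}$, into block-diagonal form with identity on a $P(\g)$-summand. Since the poset $PD(\a)$ is directed with unique minimum $(\g_0^{min},\g_1^{min})$, I can write $\g_0 = \g_0^{min}+\g$ and $\g_1=\g_1^{min}+\g$ for a unique $\g\in\NN^n$. This yields decompositions $P(\g_i) = P(\g_i^{min})\smallcoprod P(\g)$ for $i=0,1$, so every $\varphi\in R(\g_0,\g_1)$ admits the block form
$$\varphi = \begin{pmatrix} a & b \\ c & d\end{pmatrix}$$
with $a\in R^{min}(\a)$, $b\in\Hom_Q(P(\g),P(\g_0^{min}))$, $c\in \Hom_Q(P(\g_1^{min}),P(\g))$, and $d\in\End_Q(P(\g))$. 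The objective is then to find, on a Zariski open set, automorphisms that reduce $\varphi$ to $(a-bd^{-1}c)\smallcoprod 1_{P(\g)} = St_\g^{(\g_0^{min},\g_1^{min})}(a-bd^{-1}c)$.

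The crucial technical step is to show that $d$ is an automorphism for a general $\varphi$. Because $Q$ has no oriented cycles, $\End_Q(P(v))=\k$ for every vertex $v$, and any morphism $P(v)\to P(u)$ with $u\ne v$ lies in the Jacobson radical of $\End_Q(P(\g))$. Consequently the quotient $\End_Q(P(\g))/\text{rad}$ is isomorphic to $\prod_{v\in Q_0} M_{\g_v}(\k)$, and an endomorphism of $P(\g)$ is invertible exactly when each of its diagonal $(\g_v\times\g_v)$-blocks over $\k$ is invertible. Since the block-extraction map $\varphi\mapsto d$ is a linear surjection $R(\g_0,\g_1)\to \End_Q(P(\g))$, the preimage of the automorphism locus is open and dense in $R(\g_0,\g_1)$.

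On that open set, I would apply standard block row and column reduction. Setting
$$g^0 := \begin{pmatrix}1 & 0 \\ 0 & d^{-1}\end{pmatrix}\begin{pmatrix}1 & -bd^{-1} \\ 0 & 1\end{pmatrix}\in\Aut(P(\g_0)), \qquad g^1 := \begin{pmatrix}1 & 0 \\ -d^{-1}c & 1\end{pmatrix}\in\Aut(P(\g_1)),$$
a direct matrix computation gives $g^0 \varphi g^1 = (a - bd^{-1}c)\smallcoprod 1_{P(\g)}$, exhibiting $\varphi$ in the same orbit as $St_\g^{(\g_0^{min},\g_1^{min})}(\psi)$ where $\psi := a-bd^{-1}c \in R^{min}(\a)$, as required.

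The main obstacle I expect is the genericity argument for invertibility of $d$; this is where the no-oriented-cycles hypothesis on $Q$ enters essentially, via the identification $\End_Q(P(v))=\k$ and the resulting semisimple quotient of $\End_Q(P(\g))$. Once this is in hand, the reduction of $\varphi$ to block-diagonal form is purely formal matrix algebra over the morphism spaces of projectives, carried out entirely inside the group $\Aut(P(\g_0))\times\Aut(P(\g_1))^{op}$.
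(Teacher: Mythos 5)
Your proof is correct and follows essentially the same strategy as the paper: write $\varphi$ in block form over the decomposition $P(\g_i^{min})\smallcoprod P(\g)$, observe that the $\End_Q(P(\g))$-block is invertible on a nonempty open (hence dense) subset, and then reduce to block-diagonal form $\psi\smallcoprod 1_{P(\g)}$ by elementary row and column operations lying in $\Aut(P(\g_0))\times\Aut(P(\g_1))^{op}$. You spell out the genericity step (via the semisimple quotient of $\End_Q(P(\g))$) in more detail than the paper, which simply cites Remark~\ref{RankGenPr}, but the core argument and the resulting Schur-complement $\psi = a-bd^{-1}c$ are identical.
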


\begin{proof}
Let  $\a\in\ZZ^n$ and $E^t\a=\g_0-\g_1$ be a projective decomposition of $\a$. Suppose it is not minimal. Then $(\g_0,\g_1) = (\g^{min}_0+\g,\g^{min}_1+\g)$ where $(\g^{min}_0,\g^{min}_1)$ is the minimal projective decomposition of $\a$.

Let $\varphi \in R(\g_0,\g_1)=R(\g^{min}_0+\g,\g^{min}_1+\g)$ be a general element.
We will show that 
$$\varphi = (g^0,g^1)( \varphi^{min}\smallcoprod 1_{P(\g)})=(g^0,g^1)(St_{\g}^{(\g_0^{min}, \g_1^{min})}(\varphi^{min})) = (g^0,g^1) St_{\g}^{min}(\varphi^{min})$$ 
where $\varphi^{min}$ is an element in $R(\g_0^{min}, \g_1^{min})=R^{min}(\a)$.

By the above projective decomposition of $\a$, we have:
$$\Hom_Q(P(\g_1)\smallcoprod P(\g))\xrarrow{\varphi} \hom_Q(P(\g_0)\smallcoprod P(\g)).$$

So $\varphi$ can be viewed as a matrix

\[\varphi= \begin{pmatrix}
	f & h \\
	g & r 
\end{pmatrix}:
{P(\g_{1})} \smallcoprod P(\g)\to
 {P(\g_{0})} \smallcoprod P(\g),
\]
where $r: P(\g) \to P(\g)$ is an isomorphism since $\varphi$ is a general element (\ref{RankGenPr}). 
\[\text{From }\ \ \ \ 
\begin{pmatrix}
	f & h \\
	g & r 
\end{pmatrix}
=
\begin{pmatrix}
	1_{P(\g_0)} & hr^{-1} \\
	0 & 1_{P(\g)}
\end{pmatrix}
\begin{pmatrix}
	f-hr^{-1}g & 0 \\
	0 & 1_{P(\g)}  
\end{pmatrix}
\begin{pmatrix}
	1_{P(\g_1)}  & 0 \\
	g & r 
\end{pmatrix}
\]
it follows that 
\[\varphi ={(g^0,g^1)}
\begin{pmatrix}
	f-hr^{-1}g & 0 \\
	0 & 1_{P(\g)}  
\end{pmatrix}=
{(g^0,g^1)}
\begin{pmatrix}
	\varphi^{min} & 0 \\
	0 & 1_{P(\g)}  
\end{pmatrix} = {(g^0,g^1)}{St^{min}_{\g}}(\varphi^{min}),
\]
where
\[{g^0} =\begin{pmatrix}
	1_{P(\g_0)} & hr^{-1} \\
	0 & 1_{P(\g)}
\end{pmatrix} \in {\Aut (P(\g_0)\smallcoprod P(\g))},
\]
\[ 
{g^1} =\begin{pmatrix}
	1_{P(\g_1)}  & 0 \\
	g & r 
\end{pmatrix}\in {\Aut (P(\g_1)\smallcoprod P(\g))^{op}},
\]
and $\varphi^{min} = f-hr^{-1}g \in R(\g_0^{min},\g_1^{min}) = R^{min}(\a)$.
\end{proof}

 \begin{rem}
For $\a\in\NN^n$ this says that, for the minimal projective resolution $0\to P_1\to P_0\to M \to 0$ of a general module $M$ of dimension $\a$, $P_0$ and $P_1$ have no summands in common. (Apply the above theorem to $R(\a,\a-E^t\a)$ and use Remark \ref{rem:general properties of reps and maps} to pass from general properties of elements of $R(\a,\a-E^t\a)$ to general properties of modules.)
\end{rem}

\subsection{ Canonical presentation spaces for $\a\in\ZZ^n$}\label{subsec canonical presentation space}

It was  observed in \ref{PresSp} that for $\a\in\NN^n$, the canonical presentation is an element of the presentation space $R(\a,\a-E^t\a)$ and there is a close relationship between the classical representation space $R(\a)$ and the presentation space $R(\a,\a-E^t\a)$ (section \ref{sec4}). For $\a\in\ZZ^n$ we generalize this special presentation space to the canonical presentation space $R^{can}(\a)$ which in the case of $\a\in\NN^n$ turns out to be the same as $R(\a,\a-E^t\a)$.

\begin{lem}

Let $\a\in\ZZ^n$ be fixed and let $E^t\a=\g_0-\g_1$ be the minimal decomposition. Let ${\f}:P(\g_1)\to P(\g_0)$ be a general element of $R^{min}(\a)=\Hom_Q(P(\g_1),P(\g_0))$ with kernel and cokernel $P,M$:
\[
 	0\to P\to P(\g_1)\xrarrow{\f}P(\g_0)\to M\to 0.
\]
Then: (a)  $P$ must be a direct summand of $P(\g_1)$, and (b)  $\Hom_Q(P,M)=0.$ 
\end{lem}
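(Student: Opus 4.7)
The plan is to handle (a) by the standard ``submodules of projectives are projective'' fact for hereditary algebras, and to handle (b) by a one-parameter deformation exploiting the characterization of a general element as one of maximal rank (Remark \ref{RankGenPr}). For part (a), since $\k Q$ is hereditary, both $P \subseteq P(\gamma_1)$ and $\im(\varphi) \subseteq P(\gamma_0)$ are projective modules. The short exact sequence
\[
0 \to P \to P(\gamma_1) \to \im(\varphi) \to 0
\]
therefore splits, since its quotient is projective, which is precisely the assertion that $P$ is a direct summand of $P(\gamma_1)$.

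For part (b), I would argue by contradiction. Assume there is a nonzero map $f : P \to M$. Since $P$ is projective (from (a)), $f$ lifts through the canonical quotient $\pi : P(\gamma_0) \twoheadrightarrow M$ to some $\tilde f : P \to P(\gamma_0)$ with $\pi\tilde f = f$. Using the splitting $P(\gamma_1) = P \oplus P''$ from (a), deform $\varphi$ by the one-parameter family
\[
\varphi_t : P(\gamma_1) \to P(\gamma_0), \qquad \varphi_t|_{P''} = \varphi|_{P''}, \qquad \varphi_t|_P = t\,\tilde f,
\]
so that $\varphi_0 = \varphi$ (because $P \subseteq \ker \varphi$). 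Then $\im(\varphi_t) = \im(\varphi) + t\,\tilde f(P)$, and applying $\pi$ yields $t\,f(P)$, which is nonzero for $t \neq 0$; hence $\im(\varphi_t) \supsetneq \im(\varphi)$, so the rank of $\varphi_t$ strictly exceeds that of $\varphi$. This contradicts the maximality of rank for a general $\varphi \in R^{min}(\alpha)$.

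The only nontrivial point to verify is the strict containment $\im(\varphi_t) \supsetneq \im(\varphi)$, and this is immediate from $\pi\tilde f \neq 0$. Notice that minimality of $(\gamma_0, \gamma_1)$ does not enter in an essential way; the same reasoning would yield the lemma for a general element of any presentation space $R(\gamma_0', \gamma_1')$, dovetailing with the Stability Theorem \ref{thm:stability theorem} which identifies general elements of non-minimal presentation spaces (up to isomorphism) with stabilizations of elements of $R^{min}(\alpha)$.
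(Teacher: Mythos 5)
Your proof is correct and follows the same approach as the paper's: part (a) splits the short exact sequence $0\to P\to P(\g_1)\to\im\varphi\to 0$ using that submodules of projectives are projective for a hereditary algebra, and part (b) lifts a hypothetical nonzero $f:P\to M$ to $\tilde f:P\to P(\g_0)$ and shows the perturbed map has strictly larger image, contradicting maximality of rank for the general element. Your one-parameter family $\varphi_t$ just makes explicit the specialization argument the paper phrases with $\varphi+\psi$, and your closing observation that minimality of $(\g_0,\g_1)$ is not used is accurate.
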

\begin{proof} (a)
Note that $P$ must be a direct summand of $P(\g_1)$ since $Im \,\phi \subset P(\g_0)$ is projective. Let $P(\g_1)=P'\coprod P$.\\
(b) To see $\Hom_Q(P,M) = 0,$ 
let  $f:P\to M$ be any nonzero homomorphism. Then $f$ lifts to a homomorphism $\psi:P\to P(\g_0)$ whose image is not contained in the image of $\phi$. This implies that the homomorphism $\f+\psi:P'\coprod P\to P(\g_0)$ has image strictly containing the image of $\f$ and therefore has rank greater than the rank of $\f$. This gives a contradiction, since $\f+\psi$ is a specialization of the general map $\f$.
\end{proof}
 
Let $\g=\ul\dim (P/radP)$ so that $P\cong P(\g)$.
Let $\mu=\ul\dim M$.

\begin{lem}\label{unique}
Let $\a \in \ZZ^n$. Then the vectors $\mu=\ul\dim M$ and $\g=\ul\dim (P/rad P)$ satisfy the following properties. These properties determine $\mu,\g\in\NN^n$ uniquely.
\begin{enumerate}
\item $\mu,\g$ have disjoint support.
\item $\a=\mu-\Eti\g$
\end{enumerate}
Furthermore, in the special case when $\a\in\NN^n$, we have $\mu=\a$ and $\g=0$.
\end{lem}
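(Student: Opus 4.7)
The plan is to establish properties (1) and (2) directly from the exact sequence and the conclusion of the previous lemma, and then to derive uniqueness from the unipotent lower-triangular structure of $E^t$; the special case $\a\in\NN^n$ will then follow from uniqueness by inspection.

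For property (2), I would take the alternating sum of dimension vectors in the four-term exact sequence $0\to P\to P(\g_1)\to P(\g_0)\to M\to 0$, giving $\mu-\ul\dim P=\ul\dim P(\g_0)-\ul\dim P(\g_1)$. Remark \ref{fact}(5) converts each term $\ul\dim P(\delta)$ to $(E^t)^{-1}\delta$, and since $(\g_0,\g_1)$ is the minimal projective decomposition of $\a$ so that $E^t\a=\g_0-\g_1$, the right-hand side becomes $(E^t)^{-1}(\g_0-\g_1)=\a$, yielding $\mu-(E^t)^{-1}\g=\a$. For property (1), I would invoke the conclusion $\Hom_Q(P,M)=0$ from the previous lemma: since $P\cong P(\g)=\coprod_v P(v)^{\g_v}$, each vertex $v$ with $\g_v>0$ contributes a summand $P(v)\hookrightarrow P$, and the identity $\Hom_Q(P(v),M)\cong M_v$ forces $\mu_v=M_v=0$. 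Hence the supports of $\mu$ and $\g$ are disjoint.

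For uniqueness, the key observation is that for the total ordering of $Q_0$ chosen in \ref{EF}, every arrow points from a smaller vertex to a larger one, so $E$ is upper triangular with $1$'s on the diagonal, making $E^t$ lower triangular and unipotent. Given any pair $(\mu,\g)\in\NN^n\times\NN^n$ satisfying (1) and (2), multiplying (2) by $E^t$ yields $E^t\mu-\g=E^t\a$, which rearranges coordinatewise to $\mu_v-\g_v=(E^t\a)_v+\sum_{u\to v}\mu_u$ at each vertex $v$ (the sum counted with multiplicity of arrows). I would then induct on $v$ in the total order: once $\mu_u$ is determined for every $u<v$, the right-hand side is a fixed integer, and the disjoint-support condition (1) pins down $\mu_v$ and $\g_v$ uniquely as its positive and negative parts. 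This forces any such pair to coincide with the one constructed from $M$ and $P$. Finally, when $\a\in\NN^n$ the pair $(\a,0)$ trivially satisfies (1) and (2), so uniqueness identifies it with $(\mu,\g)$.

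The arguments are all elementary once the previous lemma is in hand; the only mild subtlety I anticipate is carefully justifying the lower-triangular form of $E^t$ from the choice of total order and tracking how the disjoint-support hypothesis converts the scalar identity $\mu_v-\g_v=X_v$ into the unique positive/negative decomposition of $X_v$.
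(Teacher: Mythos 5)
Your proof is correct, and properties (1) and (2) are handled exactly as in the paper (alternating dimension count on the four-term exact sequence, and $\Hom_Q(P(v),M)\cong M_v$ for (1)). For uniqueness, however, you take a genuinely different and arguably cleaner route. The paper first treats $\a\in\NN^n$ directly (showing that if $\g\neq0$ then taking $v$ minimal in the support of $\g$ forces $\a_v=-\g_v<0$, a contradiction), and then reduces the general $\a\in\ZZ^n$ to this case by induction on the number of negative coordinates, adding multiples of $(E^t)^{-1}e_v=\ul\dim P(v)$ to cancel the smallest negative entry. Your argument instead multiplies (2) by $E^t$ and exploits the unipotent lower-triangular form of $E^t$ (which holds precisely because arrows go from smaller to larger vertices in the chosen total order) to solve the recursion $\mu_v-\g_v=(E^t\a)_v+\sum_{u\to v}\mu_u$ coordinate by coordinate, with the disjoint-support condition pinning down the positive/negative parts at each step. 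Your version handles all $\a\in\ZZ^n$ in a single induction on vertices and makes the ``uniqueness by triangularity'' structure transparent, whereas the paper's version keeps the vector $(E^t)^{-1}e_v=\ul\dim P(v)$ in the foreground, which matches the representation-theoretic intuition (peeling off shifted projectives). Both are complete; the one thing you flagged as a subtlety — justifying the triangular form of $E^t$ — is indeed the crux, and your handling of it is correct.
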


\begin{proof}
(1) follows from the fact that $\Hom_Q(P,M)=0$ and (2) follows from a dimension counting argument. We are left to prove the uniqueness of $\mu,\g$.

Let $\a\in\NN^n$ and suppose that we are given a decomposition $\a=\mu-\Eti\g$. Let $v$ be a vertex in the support of $\g$ which is minimal with respect to the partial ordering of the vertices of $Q$.  Then $\a_v<0$,  using Useful fact~\ref{fact}.4, for instance.
Therefore, $\g=0$ proving uniqueness.

Now proceed by induction on the number of negative coordinates of $\a$.  If $\a$ has negative coordinates then let $v$ be minimal so that $\a_v<0$. Then $\a'=\a+|\a_v|(E^t)^{-1}e_v$ has fewer negative coordinates than $\a$ so we have a unique decomposition $\a'=\mu-(E^t)^{-1}\g$. Then we must have $\mu_v=\g_v=0$ and $\a=\mu-(E^t)^{-1}(\g+|\a_v|e_v)$ is the unique admissible decomposition of $\a$.
\end{proof}

This lemma motivates the following

\begin{defn}\label{Rcan}
Let $\a\in \ZZ^n$.  \emph{The canonical projective decomposition of $\a$} is defined as $(\mu,\mu-E^t\mu +\gamma)$, where $\mu, \g \in \NN^n$ are uniquely defined vectors as in Lemma~\ref{unique}.  Note that $\mu - E^t \mu \in \NN^n$ by Useful fact~\ref{fact}.6.  
We also define the {\em canonical \repspace} 
$$
R^{can}(\a ):=R(\mu,\mu-E^t\mu+\g).
$$
\end{defn}

\begin{rem}
For $\a\in \NN^n$, we have $R^{can}(\a)=R(\a,\a-E^t\a)$, which is the special case we considered in the previous section.
\end{rem}

\begin{eg}

We now illustrate  \ref{unique} and \ref{Rcan} on our Example \ref{Example}. 
If 
$$\a= (1, 2, -3)^t,\text{ then } \g = (0,0,3)^t, \mu=(1,2,0)^t$$
$$R^{can}(\a)=R^{can}((1,2,-3)^t)=R((1,2,0)^t,(0,1,7)^t),$$
$$R^{min}(\a)=R^{min}((1,2,-3)^t)=R((1,1,0)^t,(0,0,7)^t).$$
\commentout{
\[\text{If  }\a =
\left[
\begin{array}{ccc}
  1     \\
  1     \\
 -2      
\end{array}
\right], \text{ then }
\mu =
\left[
\begin{array}{ccc}
  1     \\
  1     \\
0      
\end{array}
\right] \text{ and }
\g =
\left[
\begin{array}{ccc}
 0     \\
  0     \\
2     
\end{array}
\right], 
\]

\[
R^{can}(\a) =R^{can}\left(
\left[
\begin{array}{ccc}
  1     \\
  1     \\
 -2      
\end{array}
\right]\right) =R\left(
\left[
\begin{array}{ccc}
  1     \\
  1     \\
 0      
\end{array}
\right],
\left[
\begin{array}{ccc}
 0     \\
  1     \\
2     
\end{array}
\right]\right), 
\]
\[R^{min}(\a)=
R^{min}\left(
\left[
\begin{array}{c}
  1     \\
  1     \\
 -2      
\end{array}
\right]\right)=R\left(
\left[
\begin{array}{c}
  1     \\
  0     \\
0     
\end{array}
\right],
\left[
\begin{array}{c}
  0     \\
  0     \\
 2      
\end{array}
\right]\right).
\]
}
\end{eg}

\begin{prop}\label{can decomposition prop}
The general element of $R^{can}(\a)$ is isomorphic to the direct sum of the canonical presentation $p_M$ of the general element $M$ of $R(\mu)$ and the unique element of $R(0,\g)$.
\end{prop}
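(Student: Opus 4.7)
The plan is to show that the general element of $R^{can}(\a)=\Hom_Q(P(\mu-E^t\mu)\smallcoprod P(\g),P(\mu))$ lies in the orbit of $p_M\smallcoprod 0$ for some general $M\in R(\mu)$. Writing the source projective as $P(\mu-E^t\mu)\smallcoprod P(\g)$, every $\varphi\in R^{can}(\a)$ splits uniquely as a pair $(\varphi_0,\varphi_1)$ with $\varphi_0\in R(\mu,\mu-E^t\mu)$ and $\varphi_1\in\Hom_Q(P(\g),P(\mu))$. The forgetful projection $\varphi\mapsto\varphi_0$ is a surjective linear map of affine spaces, hence open, so when $\varphi$ is general the component $\varphi_0$ is automatically general in $R(\mu,\mu-E^t\mu)$.

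Next, since $\mu\in\NN^n$, I would invoke Proposition~\ref{orbit prop}: the orbit of $\im\z$ under $\Aut(P(\mu))\times\Aut(P(\mu-E^t\mu))^{op}$ is open and dense in $R(\mu,\mu-E^t\mu)$. Hence there exist a general $M\in R(\mu)$ and automorphisms $(g^0,g^1)$ with $\varphi_0=g^0\,p_M\,g^1$. Embedding $g^1$ block-diagonally into $\Aut(P(\mu-E^t\mu)\smallcoprod P(\g))^{op}$ as $\begin{pmatrix}g^1 & 0\\ 0 & 1_{P(\g)}\end{pmatrix}$ and acting on $\varphi$ by the corresponding enlarged pair, I may reduce $\varphi$ to the form $(p_M,\varphi_1')$ for some $\varphi_1'\in\Hom_Q(P(\g),P(\mu))$.

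The essential step is to kill $\varphi_1'$ using the unipotent subgroup of $\Aut(P(\mu-E^t\mu)\smallcoprod P(\g))^{op}$ of matrices $\begin{pmatrix}1 & h\\ 0 & 1\end{pmatrix}$ with $h\in\Hom_Q(P(\g),P(\mu-E^t\mu))$. A direct computation analogous to the one in the proof of Theorem~\ref{thm:stability theorem} shows this action sends $(p_M,\varphi_1')$ to $(p_M,p_M\circ h+\varphi_1')$, so it suffices to find $h$ with $p_M\circ h=-\varphi_1'$; equivalently, left composition with $p_M$ must give a surjection from $\Hom_Q(P(\g),P(\mu-E^t\mu))$ onto $\Hom_Q(P(\g),P(\mu))$. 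Applying the exact functor $\Hom_Q(P(\g),-)$ to the canonical short exact sequence $0\to P(\mu-E^t\mu)\xrarrow{p_M}P(\mu)\to M\to 0$ from Section~\ref{CPP}, the cokernel of this map is identified with $\Hom_Q(P(\g),M)=\prod_v M_v^{\g_v}$.

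The main obstacle, and the point where the careful definition of the canonical projective decomposition pays off, is showing this cokernel vanishes. But $\mu$ and $\g$ have disjoint supports by Lemma~\ref{unique}, so $\g_v\ne 0$ forces $\mu_v=M_v=0$. Therefore $\Hom_Q(P(\g),M)=0$, the required $h$ exists, and $\varphi$ is isomorphic to $(p_M,0)=p_M\smallcoprod 0$, the direct sum of $p_M$ with the unique (zero) element of $R(0,\g)=\Hom_Q(P(\g),0)$.
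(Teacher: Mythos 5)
Your proof is correct, but it takes a genuinely different route from the paper's. The paper reaches the conclusion by first invoking the Stability Theorem~\ref{thm:stability theorem} to replace the general element of $R^{can}(\a)$ by a stabilization of a general element of $R^{min}(\a)$, then using the structural lemma preceding Lemma~\ref{unique} (the kernel of a general minimal presentation is a projective direct summand $P(\g)$ with $\Hom_Q(P(\g),M)=0$) to peel off $P(\g)[1]$ as a direct summand, and finally applying Proposition~\ref{orbit prop} to the remaining factor. You instead bypass the minimal presentation space entirely: you project the general $\varphi$ onto its $R(\mu,\mu-E^t\mu)$ block, normalize that block to $p_M$ by Proposition~\ref{orbit prop}, and then explicitly annihilate the remaining block $\varphi_1'\in\Hom_Q(P(\g),P(\mu))$ using the unipotent right-multiplication by $\bigl(\begin{smallmatrix}1 & h\\ 0 & 1\end{smallmatrix}\bigr)$, which works because $P(\g)$ is projective and $\Hom_Q(P(\g),M)=0$ by the disjointness of supports from Lemma~\ref{unique}. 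Your argument is more self-contained and makes the role of the disjoint-support condition visibly the crux of the vanishing computation; the paper's route is less computational but has the virtue of reusing machinery (the Stability Theorem, the minimal-presentation kernel lemma) that is established for other purposes, and in particular surfaces the direct-summand structure of the kernel rather than just killing the extra block. One small point worth tightening in your write-up, though the paper's own proof is equally loose here: after invoking Proposition~\ref{orbit prop} you assert that $M$ may be taken general in $R(\mu)$; this follows from Remark~\ref{rem:general properties of reps and maps}, since the isomorphism class of $\coker\varphi_0$ is an intrinsic invariant and hence a general $\varphi_0$ has general cokernel.
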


\begin{proof}
By the Stability Theorem \ref{thm:stability theorem}, the general element of $R^{can}(\a)$ is isomorphic to a stabilized element of 
$R^{min}(\a)$. Therefore the general element 
$$P(\mu-E^t\mu +\g) \to P(\mu)$$
will have kernel $P(\g)$ which is a direct summand. Consequently, the general element of $R^{can}(\a)$
 is a direct sum of the unique element of $R(0,\g)$ and an element of $R^{can}(\mu)$. Since $\mu\in \NN^n$, Proposition \ref{orbit prop} now applies. Therefore the general element of $R^{can}(\mu)$ lies in the orbit of $\z(R(\mu))$, i.e. it is isomorphic to a canonical presentation of an element of $R(\mu)$.
\end{proof}

\section{Virtual representation spaces and virtual semi-invariants}\label{sec6}
In this section we again deal with integral vectors $\a\in\ZZ^n$, defining  the virtual representation space as the direct limit of presentation spaces.  Similarly, we define the rings of virtual semi-invariants on the virtual representation spaces as the inverse limits of the rings of semi-invariants on the presentation spaces. Finally, we prove the Virtual Generic Decomposition Theorem (generalizing Proposition \ref{can decomposition prop}) and the Virtual First Fundamental Theorem.

\subsection{ Virtual representation space}\label{VirRep}
We recall that the {stabilization maps}:
$$
\Hom_Q(P(\g_1),P(\g_0))\to\Hom_Q(P(\g_1)\smallcoprod P(\g),P(\g_0)\smallcoprod P(\g)),
$$
are the maps which send  $\varphi$ to  $\varphi\coprod 1_{P(\g)}.$

Let $\a\in\ZZ^n$. Then the set of all representation spaces $\{R(\g_0 ,\g_1 )\}_{(\g_0 ,\g_1)\in PD(\a )}$ 
and the stabilization maps form a directed system.
We define  {\em the virtual representation space} as the direct limit over $PD(\a )$:
$$
R^{vir}(\a) = \lim_\rarrow R(\g_0 ,\g_1).
$$
(Notice that  a given pair $(\g_0 ,\g_1 )$ of dimension vectors belongs to exactly one partially ordered set $PD(\a )$, namely the one where $\a=(E^t)^{-1}(\g_0-\g_1)$.)

\subsection{ Virtual semi-invariants}\label{virsemi}

The rings $SI^{(\g_0 ,\g_1 )}(Q,\a )$ and the restriction maps induced by stabilizing define an inverse system of rings on the directed partially ordered set $PD(\a )$.
We define the ring of \emph{virtual semi-invariants} as the inverse limit over $PD(\a)$:
$$SI^{vir} (Q,\a ):=\lim_\larrow SI^{(\g_0 ,\g_1 )}(Q,\a ).$$

\noindent In other words, a {\em virtual semi-invariant} on $R^{vir}(\a)$ is a function $f^{vir}$ induced by a family of stabilization compatible semi-invariants $f^{(\g_0,\g_1)}$ on the representation spaces $R(\g_0 ,\g_1)$.

The definition of invariants $\cc_V$ induced by the determinants given in Proposition \ref{weightprop}
generalizes to virtual semi-invariants. More precisely, we have

\begin{prop}\label{virweightprop}
Let  $\a\in\ZZ^n$ and $V$ a $Q$-representation such that $\langle\a,\ul{\dim}V\rangle=0$. Then:
\begin{enumerate}
\item The family of semi-invariants $\{\cc_V^{(\g_0,\g_1)}\}_{(\g_0,\g_1)\in PD(\a)}$ is compatible with stabilizations, and thus it defines an element $\cc_V^{vir}\in SI^{vir}(Q, \a)$.
\item  The induced semi-invariant $\cc^{vir}_V$ on the virtual representation space $R^{vir}(\a)$ has combined character $\chi_{\ul\dim V}$.
\end{enumerate}
\end{prop}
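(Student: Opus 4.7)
The plan is to split the proposition into its two parts. Part (1) is the substantive claim: we must show that for any projective decomposition $(\g_0,\g_1)\in PD(\a)$ and any $\g\in\NN^n$, pullback along the stabilization map $St_{\g}^{(\g_0,\g_1)}$ sends $\cc_V^{(\g_0+\g,\g_1+\g)}$ to $\cc_V^{(\g_0,\g_1)}$. Once this compatibility is established, the family assembles into an element $\cc_V^{vir}$ of the inverse limit $SI^{vir}(Q,\a)$, and Part (2) will follow essentially for free from Proposition \ref{weightprop}.

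For Part (1), I would start with an element $\varphi\in R(\g_0,\g_1)$ and compute $\cc_V^{(\g_0+\g,\g_1+\g)}$ on the stabilized element $\varphi\smallcoprod 1_{P(\g)}$. Applying the contravariant functor $\Hom_Q(-,V)$ to the coproduct decompositions $P(\g_i+\g)\cong P(\g_i)\smallcoprod P(\g)$ gives direct sum decompositions
\[
\Hom_Q(P(\g_i+\g),V)\cong \Hom_Q(P(\g_i),V)\oplus \Hom_Q(P(\g),V),
\]
and with respect to these decompositions the homomorphism $\Hom_Q(\varphi\smallcoprod 1_{P(\g)},V)$ is block diagonal with blocks $\Hom_Q(\varphi,V)$ and $\Hom_Q(1_{P(\g)},V)$. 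Since the second block is the identity, taking determinants yields
\[
\cc_V^{(\g_0+\g,\g_1+\g)}(\varphi\smallcoprod 1_{P(\g)})=\det\Hom_Q(\varphi,V)\cdot 1=\cc_V^{(\g_0,\g_1)}(\varphi),
\]
which is precisely the compatibility required by the inverse system defining $SI^{vir}(Q,\a)$.

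For Part (2), Proposition \ref{weightprop} tells us that each $\cc_V^{(\g_0,\g_1)}$ is a semi-invariant with combined character $\chi_{\ul\dim V}$. The grading of $SI^{vir}(Q,\a)$ is inherited from the gradings on the finite-level rings $SI^{(\g_0,\g_1)}(Q,\a)$, so an element of the inverse limit that is homogeneous of combined character $\chi_{\ul\dim V}$ at every level is itself homogeneous of combined character $\chi_{\ul\dim V}$. Applying this to $\cc_V^{vir}$ gives the second assertion.

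I do not anticipate a real obstacle; the whole argument reduces to additivity of determinants on block-diagonal matrices together with the fact that $\Hom_Q(-,V)$ converts coproducts into direct sums. The only bookkeeping issue is the indeterminacy in weight vectors discussed in Proposition \ref{Prop(1-4)}(3) and Remark \ref{Indeterm}: as $(\g_0,\g_1)$ grows through $PD(\a)$ the support of the relevant projective modules only grows, and at every newly-supported vertex $v$ the weight coordinate agrees with $\dim V_v$, so the combined weight coset stabilizes consistently to that of $(\ul\dim V,\ul\dim V)$.
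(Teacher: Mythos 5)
The proposal is correct and supplies the routine block-diagonal determinant argument that the paper itself omits (it states the proposition as a direct generalization of Proposition~\ref{weightprop} without a written-out proof); the key observation that $\Hom_Q(-,V)$ turns the stabilization $\varphi\smallcoprod 1_{P(\g)}$ into a block diagonal map whose $P(\g)$-block is the identity is exactly the right one.
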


We proceed to analyze the general elements in the virtual representation spaces.

\begin{cor}\label{cor516}
Let $\a\in\ZZ^n$ and let $V$ be a representation of $Q$. Then the following are equivalent.
\begin{enumerate}
\item $\cc_V\neq0$ on $R^{min}(\a)$.
\item $\cc_V^{(\g_0,\g_1)}\neq0$ on $R(\g_0,\g_1)$ for all projective decompositions $(\g_0,\g_1)\in PD(\a)$.
\item $\cc_V^{(\g_0,\g_1)}\neq0$ on $R(\g_0,\g_1)$ for some projective decomposition $(\g_0,\g_1)\in PD(\a)$.
\item $\cc_V^{vir}\neq0$ on $R^{vir}(\a)$.
\end{enumerate}
\end{cor}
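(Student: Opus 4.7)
The plan is to establish the cycle $(1)\Rightarrow(2)\Rightarrow(3)\Rightarrow(1)$ and then derive the equivalence with $(4)$ from the inverse limit structure. The central ingredient is the Stability Theorem \ref{thm:stability theorem}, together with the fact that $\cc_V$ behaves well under stabilization.

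First I would verify the stabilization compatibility of the family $\{\cc_V^{(\g_0,\g_1)}\}$ (this is Proposition \ref{virweightprop}(1)): for $\f\in R(\g_0,\g_1)$ and $\g\in\NN^n$, one has
\[
\Hom_Q(\f\smallcoprod 1_{P(\g)},V)=\Hom_Q(\f,V)\oplus \mathrm{id}_{\Hom_Q(P(\g),V)},
\]
so taking determinants gives $\cc_V^{(\g_0+\g,\g_1+\g)}(\f\smallcoprod 1_{P(\g)})=\cc_V^{(\g_0,\g_1)}(\f)$. This immediately yields $(1)\Rightarrow(2)$: if $\cc_V$ is nonzero at some $\f^{min}\in R^{min}(\a)$, then for any projective decomposition $(\g_0,\g_1)=(\g_0^{min}+\g,\g_1^{min}+\g)$, the stabilized element $\f^{min}\smallcoprod 1_{P(\g)}\in R(\g_0,\g_1)$ has $\cc_V^{(\g_0,\g_1)}\neq 0$. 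The implication $(2)\Rightarrow(3)$ is trivial since $PD(\a)$ is nonempty.

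For $(3)\Rightarrow(1)$, suppose $\cc_V^{(\g_0,\g_1)}\not\equiv 0$ on some $R(\g_0,\g_1)$. Then its nonvanishing locus is a nonempty Zariski open set, hence contains a general element $\f$. By the Stability Theorem there exist $\f^{min}\in R^{min}(\a)$ and $(g^0,g^1)\in \Aut(P(\g_0))\times\Aut(P(\g_1))^{op}$ with $\f=(g^0,g^1)\bigl(\f^{min}\smallcoprod 1_{P(\g)}\bigr)$. Since $\cc_V^{(\g_0,\g_1)}$ is a semi-invariant with combined character $\chi_{\ul\dim V}$ by Proposition \ref{weightprop}(2), we obtain
\[
\cc_V^{(\g_0,\g_1)}(\f)=\chi_{\ul\dim V}(g^0)\chi_{\ul\dim V}(g^1)\cdot\cc_V^{(\g_0,\g_1)}(\f^{min}\smallcoprod 1_{P(\g)})=\chi_{\ul\dim V}(g^0)\chi_{\ul\dim V}(g^1)\cdot\cc_V(\f^{min}).
\]
Since the left side is nonzero and characters of linear algebraic groups are nowhere-vanishing on $\Aut$, we conclude $\cc_V(\f^{min})\neq 0$, giving $(1)$.

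Finally, the equivalence of $(4)$ with the other conditions is formal: $\cc_V^{vir}$ is by construction the element of the inverse limit $SI^{vir}(Q,\a)$ represented by the compatible family $\{\cc_V^{(\g_0,\g_1)}\}$, and an element of an inverse limit is nonzero precisely when one of its components is nonzero; by the equivalence $(2)\Leftrightarrow(3)$ just established, this occurs for one $(\g_0,\g_1)$ iff it occurs for all. The only subtle point in the whole argument is $(3)\Rightarrow(1)$, where the Stability Theorem is essential: without it, one could not guarantee that nonvanishing on a non-minimal space descends to nonvanishing on $R^{min}(\a)$, and it is the semi-invariance of $\cc_V$ under the full group $\Aut(P(\g_0))\times\Aut(P(\g_1))^{op}$ that lets us pass freely between $\f$ and its representative $\f^{min}\smallcoprod 1_{P(\g)}$.
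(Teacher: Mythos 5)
Your proof is correct and follows essentially the same route as the paper's: stabilization compatibility of the $\cc_V$ family gives $(1)\Rightarrow(2)$, the Stability Theorem gives $(3)\Rightarrow(1)$, and $(3)\Leftrightarrow(4)$ is immediate from the definition of $\cc_V^{vir}$ as an element of the inverse limit. Your elaboration of $(3)\Rightarrow(1)$ — invoking the semi-invariance equation and the nonvanishing of the character on $\Aut(P(\g_0))\times\Aut(P(\g_1))^{op}$ to descend to $\f^{min}$ — makes explicit what the paper leaves implicit in the phrase ``equivalent to an element of $R^{min}(\a)$.''
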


\begin{rem}
This is an extension of Proposition \ref{homext}.
\end{rem}

\begin{proof}
If $\cc_V\neq0$ on $R^{min}(\a)$ then the composition $R^{min}(\a)\to R(\g_0,\g_1)\xrarrow{\cc_V}\k$ is nonzero. So $(1)\then(2)$.

Clearly, $(2)\then(3)$ and $(3)\iff (4)$ by definition of the virtual semi-invariant $\cc_V^{vir}$. Finally, $(3)\then(1)$ by the stability theorem. If $\cc_V^{(\g_0,\g_1)}\neq0$ on $R(\g_0,\g_1)$ then $\cc_V^{(\g_0,\g_1)}\neq0$ on the general element of $R(\g_0,\g_1)$ which is equivalent to an element of $R^{min}(\a)$ by stability.
\end{proof}

\begin{defn}
We define the \emph{$\ZZ$-support} of $\cc_V$ to be the set of all $\a\in\ZZ^n$ so that any of the equivalent conditions of Corollary \ref{cor516} hold:
\[
	supp_\ZZ(\cc_V):=\{\a\in\ZZ^n\st \cc_V^{vir}\neq0 \text{ on } R^{vir}(\a)\}
\]
\end{defn}

\begin{lem}\label{suppcor} If $\b=\ul\dim V$ is sincere, i.e., $\b_v\neq0$ for all $v\in Q_0$, then $$supp_\ZZ(\cc_V )=\{ \a\in\NN^n\ |\ \<\a,\b\>=0\ {\rm and}\ \exists M\in R(\a )\ {\rm such\ that} \Hom_Q(M,V)=0.\}$$\end{lem}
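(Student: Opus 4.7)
The plan is to apply Proposition \ref{homext} directly, using the sincerity hypothesis to force the auxiliary projective $P$ in that proposition to vanish, and then use the Euler form to bridge between the Hom condition and the Ext condition.

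First I would unpack membership in the support: by Corollary \ref{cor516}, $\a \in \text{supp}_\ZZ(\cc_V)$ is equivalent to $\cc_V^{(\g_0,\g_1)} \neq 0$ on $R(\g_0,\g_1)$ for some projective decomposition of $\a$, which by Proposition \ref{homext} is equivalent to the existence of a module $M$ and a projective $P$ with $\a = \ul\dim M - \ul\dim P$, $\Hom_Q(P,V) = 0$, $\Hom_Q(M,V) = 0$, and $\Ext_Q(M,V) = 0$.

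For the forward inclusion, the key observation is that sincerity of $\b$ forces $P=0$. Indeed, writing $P = \coprod_v P(v)^{m_v}$, we have $\dim_\k \Hom_Q(P(v),V) = \dim_\k V_v = \b_v \neq 0$ for every $v \in Q_0$, so $\Hom_Q(P,V) = 0$ implies $m_v = 0$ for all $v$, hence $P = 0$. Thus $\a = \ul\dim M \in \NN^n$. The equality $\<\a,\b\> = 0$ then drops out of Useful fact \ref{fact}(1) applied to $M$ and $V$, since both $\Hom_Q(M,V)$ and $\Ext_Q(M,V)$ vanish. This gives the ``$\subseteq$'' containment and also ensures the support is automatically contained in $\NN^n$.

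For the reverse inclusion, suppose $\a \in \NN^n$ satisfies $\<\a,\b\> = 0$ and there is some $M \in R(\a)$ with $\Hom_Q(M,V) = 0$. Then by Useful fact \ref{fact}(1),
\[
    \dim_\k \Ext_Q(M,V) = \dim_\k \Hom_Q(M,V) - \<\a,\b\> = 0,
\]
so $\Ext_Q(M,V) = 0$ automatically. Taking $P = 0$, the quadruple $(M,P)$ satisfies all four conditions of Proposition \ref{homext}, so $\a \in \text{supp}_\ZZ(\cc_V)$.

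I do not anticipate any real obstacle: the only subtle point is recognizing that sincerity is exactly what eliminates the projective summand $P$ in the characterization of Proposition \ref{homext}, after which the Euler form furnishes the missing Ext-vanishing for free.
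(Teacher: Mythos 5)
Your proof is correct and follows exactly the route the paper intends: the paper's proof is the one-line remark ``This follows from Proposition~\ref{homext}, Corollary \ref{cor516} and Useful fact~\ref{fact}.1,'' and you have correctly unpacked it — sincerity kills the projective $P$ (since $\dim\Hom_Q(P(v),V)=\b_v>0$ for every $v$), which places $\a$ in $\NN^n$, and the Euler form identity $\<\a,\b\>=\dim\Hom_Q(M,V)-\dim\Ext_Q(M,V)$ makes the $\Ext$-vanishing equivalent to $\<\a,\b\>=0$ once $\Hom_Q(M,V)=0$. No gaps.
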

\begin{proof}  This follows from Proposition~\ref{homext}, Corollary \ref{cor516} and Useful fact~\ref{fact}.1.\end{proof}

\subsection{ Virtual Generic Decomposition Theorem}\label{VirGenDec}

Let us make preparations to state the Virtual Generic Decomposition Theorem.
We want to extend the notions of $hom_Q$ and $ext_Q$ to include shifted projective module such as $P(\g)[1]$. This is the projective complex $P(\g)\to 0$ which is the unique element of $R(0,\g)$. We note that shifted projectives are uniquely determined up to isomorphism by their dimension vector which is negative:
\[
	\ul\dim P(\g)[1]=-(E^t)^{-1}\g.
\]
We use the notation $hom_{D^b}(\a,\b[1])=ext_Q(\a,\b)$ and $ext_{D^b}(\a[1],\b)=hom_Q(\a,\b)$ and, in general,
\[
	ext_{D^b}(\a[p],\b[q])=hom_{D^b}(\a[p],\b[q+1]):=\begin{cases}
		ext_Q(\a,\b) &\text{if } p=q\\
		hom_Q(\a,\b) & \text{if } p=q+1\\
		0 &\text{otherwise}
	\end{cases}
\]
for all $\a,\b\in\NN^n$. In particular, $ext_{D^b}(\pi(\g)[1],\b)=0$ for $\b\in\NN^n$ if and only if $\b,\g$ have disjoint supports.

Let  $\a\in\ZZ^n$. Consider the canonical representation space as defined in \ref{Rcan}.
$$
R^{can}(\a ):=R(\mu,\mu-E^t\mu+\g ).
$$
The dimension vector $\mu=\ul\dim M$ has a generic decomposition
\[
	\mu=\sum \b_i
\]
where $\b_i$ are Schur roots with the property that $ext_Q (\b_i,\b_j)=0$ for all $i\neq j$. We recall that \emph{Schur roots} are dimension vectors $\b_i$ so that the general representation of dimension $\b_i$ is indecomposable, \ref{Schur}. Thus $M$ decomposes as $M\cong \coprod M_i$ with $\ul\dim M_i=\b_i$ where $M_i$ are indecomposable modules which do not extend each other.


\begin{thm}[Virtual Generic Decomposition]\label{thm:generic decomposition}
Any $\a\in\ZZ^n$ has a unique decomposition of the form
\[
	\a=\b_1+\b_2+\cdots+\b_k-(E^t)^{-1}\g
\]
where
\begin{enumerate}
\item $\b_1,\cdots,\b_k,\g\in\NN^n$,
\item $\b_i,\g$ have disjoint support for all $i$,
\item $ext_{D^b}(\b_i,\b_j)=0$ for all $i\noteq j$ and
\item each $\b_j$ is a Schur root.
\end{enumerate}
Furthermore, the general element $f:P_1\to P_0$ of the canonical \repspace\ $R^{can}(\a)$ is homotopy equivalent to a direct sum of projective complexes which are either
\begin{enumerate}
\item[(i)] minimal resolutions of indecomposable modules $M_j$ with $\ul\dim M_j=\b_j$ or 
\item[(ii)] complexes of the form $P(v_i)[1]=(P(v_i)\to 0)$ with $\ul\dim P(v_i)[1]=-\Eti e_{v_i}$.\qed
\end{enumerate}
\end{thm}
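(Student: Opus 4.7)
The plan is to reduce the statement to results already established in the paper, principally Lemma~\ref{unique}, the classical Generic Decomposition Theorem~\ref{classical generic decomp}, Proposition~\ref{can decomposition prop}, and the Stability Theorem~\ref{thm:stability theorem}. First I would invoke Lemma~\ref{unique} to write $\a=\mu-\Eti\g$ uniquely with $\mu,\g\in\NN^n$ of disjoint support, and then apply the classical Generic Decomposition Theorem to $\mu$ to obtain $\mu=\b_1+\cdots+\b_k$ where the $\b_i$ are Schur roots with $ext_Q(\b_i,\b_j)=0$ for $i\neq j$. This yields $\a=\sum\b_i-\Eti\g$, with (1) and (4) immediate; (2) follows because each $\b_i$ is supported inside the support of $\mu$, which is disjoint from that of $\g$; and (3) reduces to $ext_Q(\b_i,\b_j)=0$, since all $\b_i$ sit in degree zero and hence $ext_{D^b}(\b_i,\b_j)=ext_Q(\b_i,\b_j)$ by the definition preceding the theorem statement.

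Uniqueness proceeds in two steps. Any candidate decomposition $\a=\sum\b_i'-\Eti\g'$ satisfying (1) and (2) produces nonnegative vectors $\sum\b_i'$ and $\g'$ of disjoint support, so the uniqueness clause of Lemma~\ref{unique} forces $\sum\b_i'=\mu$ and $\g'=\g$; then classical uniqueness for the generic decomposition of $\mu$ pins down the multiset $\{\b_i'\}$.

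The main obstacle, and where I would focus attention, is the final chain homotopy equivalence statement for the general element of $R^{can}(\a)$. Proposition~\ref{can decomposition prop} identifies this general element with the direct sum of the canonical presentation $p_M$ of a general $M\in R(\mu)$ and the unique element of $R(0,\g)$, which is precisely $P(\g)[1]=\coprod P(v_i)[1]$, accounting for all complexes of type (ii). The classical Generic Decomposition Theorem then makes $M$ split as $\coprod M_i$ with $\ul\dim M_i=\b_i$ and each $M_i$ indecomposable, and since the canonical projective presentation construction of Section~\ref{CPP} is additive on direct sums, $p_M\cong\coprod p_{M_i}$. Each $p_{M_i}$ is in turn homotopy equivalent to the minimal projective resolution of $M_i$: applying the Stability Theorem to $R(\b_i,\b_i-E^t\b_i)$, a general element there is isomorphic to the direct sum of a minimal element of $R^{min}(\b_i)$ with an identity map $1_{P(\g_i')}$ for some $\g_i'\in\NN^n$, and such an identity summand is a contractible complex that drops out under chain homotopy equivalence. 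Assembling yields the desired direct sum of complexes of type (i) for the $M_i$ together with the shifted projectives of type (ii), completing the proof.
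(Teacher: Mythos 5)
Your proposal is correct and follows the same route as the paper's proof, which invokes Proposition~\ref{can decomposition prop} and the classical Generic Decomposition Theorem~\ref{classical generic decomp}; you simply spell out the steps the paper leaves implicit — the uniqueness argument via Lemma~\ref{unique} and the homotopy-equivalence bookkeeping via additivity of $p_M$ and the Stability Theorem — all of which are already embedded in the chain of lemmas leading to Proposition~\ref{can decomposition prop}.
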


\begin{proof}
Proposition \ref{can decomposition prop} tells us that the general element $f:P_1\to P_0$ has $\ker f=P(\g)$. The classical generic decomposition theorem (\ref{classical generic decomp}) gives us the stated decomposition of $M=\coker f$.
\end{proof}

\subsection{ Virtual First Fundamental Theorem}\label{subsec VFFT}

\begin{thm}[Virtual First Fundamental Theorem]\label{thm: vFFT}
For any $\a\in\ZZ^n$ the ring of virtual semi-invariants on $R^{vir}(\a)$ is generated by the semi-invariants $\cc_V^{vir}$ for all modules $V$ so that $\<\a,\ul\dim V\>=0$ and $\<\b_j,\ul\dim V\>=0$ for all $\b_j$ in the generic decomposition of $\a$. Consequently, we have a graded decomposition of the ring of virtual semi-invariants
\[
	SI^{vir}(Q,\a)=\bigoplus_\s SI^{vir}(Q,\a)_{\chi_\s}
\]
where the sum is over all $\s\in\NN^n$ with support disjoint from the support of $\g$ such that $\<\a,\s\>=\<\b_i,\s\>=0$ for all $\b_i$ in the generic decomposition of $\a$.
\end{thm}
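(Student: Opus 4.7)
The plan is to identify the virtual ring with the semi-invariants on the canonical presentation space $R^{can}(\a) = R(\mu,\mu-E^t\mu+\g)$ and then combine the classical First Fundamental Theorem with the Virtual Generic Decomposition Theorem \ref{thm:generic decomposition}. By the inverse-limit definition of $SI^{vir}(Q,\a)$ together with the Stability Theorem \ref{thm:stability theorem}, a virtual semi-invariant is determined by its restriction to any single presentation space in the directed system, and I would take this presentation space to be $R^{can}(\a)$, since Proposition \ref{can decomposition prop} already gives an explicit normal form for its general element.

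Combining Proposition \ref{can decomposition prop} with the classical generic decomposition applied to $\mu$, the orbit of
\[
	p_{M_1} \smallcoprod \cdots \smallcoprod p_{M_k} \smallcoprod (P(\g)\to 0)
\]
is open and dense in $R^{can}(\a)$, where the $M_i$ are general indecomposables of dimension $\b_i$. The subgroup $\Aut(P(\g))$ of $\Aut(P(\mu-E^t\mu+\g))^{op}$ fixes this element, because it acts only on the $P(\g)$ summand of the source, which maps to zero. Hence for any nonzero semi-invariant of combined weight $\s$, the restriction of its character to $\Aut(P(\g))$ must be trivial, which forces $\s_v=0$ for every $v\in\operatorname{supp}(\g)$.

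With $\s$ supported off $\operatorname{supp}(\g)$, restriction to the $R^{can}(\mu)$ factor is injective on the relevant graded piece, and Corollary \ref{iso of rings of sis} identifies it with $SI(Q,\mu)_{\chi_{E\s}}$. By Theorem \ref{thm: FFT} this piece is spanned by the $c_V$'s with $\ul\dim V=\s$ and $\<\mu,\s\>=0$; each such $c_V$ pulls back to $\cc_V^{vir}$ of combined weight $\s$ by Remark \ref{rem:zeta sends Cv to cv} and Proposition \ref{virweightprop}, and the identity $\<\a,\s\>=\<\mu,\s\>-\g\cdot\s=\<\mu,\s\>=0$ (using $\operatorname{supp}(\s)\cap\operatorname{supp}(\g)=\emptyset$) yields $\<\a,\ul\dim V\>=0$. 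To obtain the finer condition $\<\b_j,\ul\dim V\>=0$ for every $j$, I would evaluate $c_V$ on the generic $M=\smallcoprod M_i$: the matrix $\Hom_Q(p_M,V)=\bigoplus_i\Hom_Q(p_{M_i},V)$ is block-diagonal with the $i$-th block having excess of columns over rows equal to $\<\b_i,\s\>$, so if any $\<\b_i,\s\>\noteq 0$ the total matrix has a nontrivial kernel or cokernel and $c_V$ vanishes identically. Thus only the $\cc_V^{vir}$ satisfying both orthogonality conditions are nonzero generators, and the claimed graded decomposition follows by collecting them by weight.

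The key technical input is the density of the orbit asserted in the second paragraph: this requires extending Proposition \ref{orbit prop} to the canonical presentation space of an arbitrary $\a\in\ZZ^n$, which I expect to follow from the Stability Theorem together with a dimension count showing that the distinguished element of Proposition \ref{can decomposition prop} is indeed generic.
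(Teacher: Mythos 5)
Your proposal is correct and follows essentially the same route as the paper: reduce to the canonical presentation space $R^{can}(\a)$ via the Stability Theorem, invoke Proposition~\ref{can decomposition prop} to split the general element as $p_M \smallcoprod (P(\g)\to 0)$, appeal to the classical FFT on the $R(\mu,\mu-E^t\mu)$ factor (you route through Corollary~\ref{iso of rings of sis}, the paper cites Theorem~\ref{c_Vspangen} directly — logically equivalent), and deduce the $\<\b_i,\s\>=0$ constraints by evaluating $c_V$ on the generic decomposition of $M$. One small remark: the caveat in your last paragraph is unnecessary — the orbit density you need is exactly the content of Proposition~\ref{can decomposition prop} (whose proof already combines the Stability Theorem with Proposition~\ref{orbit prop}), so there is no remaining gap. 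Your explicit isotropy-group argument that the character must vanish on $\Aut(P(\g))$, yielding $\s_v=0$ for $v\in\operatorname{supp}(\g)$, is a nice way of making precise the support-disjointness claim that the paper leaves largely implicit.
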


\begin{proof}
By Proposition~\ref{can decomposition prop} the general element of $R^{can}(\alpha )$ is a direct sum of a presentation of a module $M$ with dimension vector $\mu$ and the shifted projective module $P(\g)[1]$. This shows that every semi-invariant on $R^{can}(\a)$ restricts to a semi-invariant on $R(\mu , \mu -E^t\mu )$ which determines it uniquely. But $SI(\mu , \mu -E^t\mu )$ is spanned by semi-invariants $\cc_V$ by Theorem~\ref{c_Vspangen}. And $C_V$ extends to $R^{can}(\a)$ if and only if $\<\a,\ul\dim V\>=0$. Furthermore, $C_V$ will be trivial on the general element of $R^{can}(\a)$ unless $\<\b_i,\ul\dim V\>=0$ for each $\b_i$ in the generic decomposition of $\a$. 

For a general pair $(\g_0 ,\g_1 )\in PD(\a )$ the elements in some Zariski open set in $R(\g_0 ,\g_1 )$ are the direct sum of an identity map on some projective module and a map from $R^{min}(\a )$. Since a semi-invariant is determined by its restriction to the open set, the result follows.
\end{proof}

\subsection{ Virtual Saturation Theorem}\label{sec7}
The original Saturation Theorem ~\ref{satthm} gives all non-negative integral vectors $\a$, such that the classical representation space $R(\a)$ has a non-zero semi-invariant of a prescribed weight. In this paper, we describe all integral vectors $\a$, such that the virtual representation space $R^{vir}(\a)$ has a virtual semi-invariant of a prescribed weight.

Following the classical definition of the support \ref{Supp} for the weights of semi-invariants, we give the following definition
\begin{defn}The ${\ZZ}$-{\em support} of a vector $\b\in \NN^n$ is defined to be:
$$supp_{\ZZ}(\b)= \{\a\in \ZZ^n|\  SI^{vir}(Q,\a)_{\chi_{\b}}\neq 0\}.$$
\end{defn}
As a corollary to the virtual first fundamental theorem, we have the following description of the supports of semi-invariants.
\begin{cor} The ${\ZZ}$-{\em support} of a vector $\b\in \NN^n$ is defined to be:
$$supp_{\ZZ}(\b)= \{\a\in \ZZ^n|\ \cc^{vir}_V\neq 0 \text{ on } R^{vir}(\a) \text{ for some module } V \text{ with } \ul\dim V = \b\}.$$
\end{cor}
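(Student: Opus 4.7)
The plan is to derive the corollary directly from the Virtual First Fundamental Theorem~\ref{thm: vFFT} together with Proposition~\ref{virweightprop}. Writing $S(\b)$ for the set $\{\a\in \ZZ^n \mid \cc^{vir}_V\neq 0 \text{ on } R^{vir}(\a) \text{ for some } V \text{ with } \ul\dim V = \b\}$, I need to prove $supp_{\ZZ}(\b) = S(\b)$, i.e.\ $SI^{vir}(Q,\a)_{\chi_\b}\neq 0$ if and only if there exists a module $V$ of dimension $\b$ whose virtual semi-invariant $\cc_V^{vir}$ is nonzero on $R^{vir}(\a)$.

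For the containment $S(\b)\subseteq supp_{\ZZ}(\b)$: if $V$ is a representation with $\ul\dim V=\b$ and $\cc_V^{vir}\neq 0$ on $R^{vir}(\a)$, then by Proposition~\ref{virweightprop}(2) the virtual semi-invariant $\cc_V^{vir}$ carries combined character $\chi_{\ul\dim V}=\chi_\b$. Hence $\cc_V^{vir}$ is a nonzero element of the graded piece $SI^{vir}(Q,\a)_{\chi_\b}$, giving $\a\in supp_{\ZZ}(\b)$. This direction is immediate and free of obstruction.

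For the reverse containment $supp_{\ZZ}(\b)\subseteq S(\b)$, suppose $SI^{vir}(Q,\a)_{\chi_\b}\neq 0$ and choose any nonzero element $f$ of this graded piece. The Virtual First Fundamental Theorem~\ref{thm: vFFT} provides a graded decomposition $SI^{vir}(Q,\a)=\bigoplus_{\s}SI^{vir}(Q,\a)_{\chi_\s}$ with sum indexed by weight vectors $\s\in\NN^n$ subject to the stated support/orthogonality conditions, and shows that each graded piece is spanned by the functions $\cc_V^{vir}$ with $\ul\dim V=\s$. Applied with $\s=\b$, this expresses $f$ as a finite linear combination $f=\sum \lambda_i \cc_{V_i}^{vir}$ with $\ul\dim V_i=\b$ for every $i$. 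Since $f\neq 0$, at least one summand $\cc_{V_i}^{vir}$ must itself be nonzero on $R^{vir}(\a)$, providing a module $V=V_i$ witnessing $\a\in S(\b)$.

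The only subtlety, which I expect to be the main bookkeeping concern rather than a genuine obstacle, is ensuring that the grading index in the VFFT is an honest vector in $\NN^n$ and not merely a character modulo indeterminacy. This holds because $R^{vir}(\a)$ is the direct limit over all projective decompositions in $PD(\a)$; by stabilizing with a sincere projective $P(\g)$ one can always arrange both $\g_0+\g$ and $\g_1+\g$ to be strictly positive at every vertex, so Proposition~\ref{Prop(1-4)}(3) eliminates all indeterminate coordinates of the weight. Thus $\chi_\b$ corresponds uniquely to the weight $\s=\b$, and the containment argument above is unambiguous. No other obstruction is anticipated.
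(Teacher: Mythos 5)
Your argument is correct and is exactly the one the paper leaves implicit when it says ``As a corollary to the virtual first fundamental theorem.'' Both directions are handled cleanly: Proposition~\ref{virweightprop}(2) places $\cc_V^{vir}$ in the correct graded piece, and the span/generation statement of Theorem~\ref{thm: vFFT} (together with the multiplicativity $\cc_{V_1}\cc_{V_2}=\cc_{V_1\oplus V_2}$, which is why ``generated'' and ``spanned'' coincide here) forces a nonzero $\cc_{V_i}^{vir}$ of dimension $\b$ whenever the graded piece is nonzero. Your side remark about eliminating weight indeterminacy by stabilizing to a sincere projective is exactly the right way to see that the grading by $\s\in\NN^n$ is unambiguous in the direct limit, which the paper addresses only indirectly via Proposition~\ref{Prop(1-4)}(3) and Remark~\ref{Indeterm}.
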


\begin{rem}\label{OpenCondCv}
By Corollary \ref{cor516}, $\cc^{vir}_V \neq 0$ on $R^{vir}(\a)$ if and only if
$\cc_V\noteq0$ on $R(\g_0,\g_1)$ for some fixed $(\g_0,\g_1)\in PD(\a)$. This is an open condition on $V$. Therefore, if it holds for some choice of $V$ then it will hold for a general representation of dimension $\b = \ul\dim V$.
\end{rem}

The Saturation and Generalized Saturation theorems describe supports of semi-invariants and virtual semi-invariants as $\NN^n\cap D(\b)$ and $\ZZ^n\cap D(\b)$ respectively. 
The sets $D(\b)\subset \RR^n$ were already defined in \ref{DefDb}, but we give now a more detailed description together with a description of a particular $D(\b)$ for the example \ref{Example}.

Let $\b\in \NN^n$. Define $H(\b)$ to be the hyperplane in the root space $\RR^n$ given by
\[
	H(\b):=\{\a\in\RR^n\st \<\a,\b\>=0\}
\]
Let $H_+(\b),H_-(\b)\subseteq\RR^n$ be the closed half-spaces given by
\[
	H_+(\b):=\{\a\in\RR^n\st \<\a,\b\>\geq0\},\ 
	H_-(\b):=\{\a\in\RR^n\st \<\a,\b\>\leq0\}.\text{ Then}
\]
$$
D(\b )=H(\b )\cap \bigcap_{\b'\into\b} H_-(\b' ).
$$

\noindent Here we recall that $\b'\into\b$ means that the general representation of dimension $\b$ has a subrepresentation of dimension $\b'$. 
It also has a quotient of dimension $\b''=\b-\b'$ and we write $\b\onto\b''$. Since $\<\a,\b\>=\<\a,\b'\>+\<\a,\b''\>$ we see that
\[
	H(\b)\cap H_-(\b')=H(\b)\cap H_+(\b'').
\]

\begin{eg} Again, we illustrate using Example \ref{Example}.
Let $\b=(0,1,2)^t.$ Then $$D((0,1,2)^t)=
H((0,1,2)^t)\cap H_-((0,0,1)^t)\cap H_-((0,0,2)^t)=$$
$$H((0,1,2)^t)\cap H_+((0,1,0)^t)\cap H_+((0,1,1)^t).$$
\commentout{
\[
\text{Let }\b =
\left[
\begin{array}{ccc}
  1      \\
  1      \\
  1      
\end{array}
\right]. \text{ Then }
D\left(\left[
\begin{array}{ccc}
  1   \\
  1   \\
  1   
\end{array}
\right]\right)=
\]
\[
H\left(\left[
\begin{array}{ccc}
  1   \\
  1   \\
  1   
\end{array}
\right]\right)\cap
H_-\left(\left[
\begin{array}{ccc}
  0   \\
  1   \\
  1   
\end{array}
\right]\right)\cap
H_-\left(\left[
\begin{array}{ccc}
  0   \\
  0   \\
  1   
\end{array}
\right]\right)=
\]
\[
H\left(\left[
\begin{array}{ccc}
  1   \\
  1   \\
  1   
\end{array}
\right]\right)\cap
H_+\left(\left[
\begin{array}{ccc}
  1   \\
  0   \\
  0   
\end{array}
\right]\right)\cap
H_+\left(\left[
\begin{array}{ccc}
  1   \\
  1   \\
  0   
\end{array}
\right]\right).
\]
}
Therefore:
$$D(\b) =\{\ \a\in \RR^3\ |\  2\a_3=3\a_2+\a_1,\ \a_2 \ge\a_1\ \}$$

\end{eg}

The following proposition follows immediately from the definition.

\begin{prop}\label{convex}
The set $D(\b)$ is a closed and convex subset of the hyperplane $H(\b)$ for any nonzero $\b\in\NN^n$.\qed
\end{prop}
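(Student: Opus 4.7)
The plan is to observe that this proposition is essentially immediate from the definition, since $D(\b)$ is defined as an intersection of a hyperplane with a (possibly infinite) family of closed half-spaces, and both closedness and convexity are preserved under arbitrary intersections.

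More concretely, I would proceed as follows. First, the hyperplane $H(\b)=\{\a\in\RR^n\st \<\a,\b\>=0\}$ is the zero set of the linear functional $\a\mapsto \<\a,\b\>=\a^tE\b$, hence it is closed and convex (indeed, it is an affine subspace). Second, for each $\b'\into\b$, the set $H_-(\b')=\{\a\in\RR^n\st \<\a,\b'\>\leq0\}$ is a closed half-space, again defined by a single (weak) linear inequality, so it is closed and convex.

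Now recall that an arbitrary intersection of closed sets is closed, and an arbitrary intersection of convex sets is convex (if $x,y$ lie in every $C_i$ and each $C_i$ is convex, then the segment $tx+(1-t)y$ lies in every $C_i$, hence in the intersection). Applying these two facts to
\[
D(\b)=H(\b)\cap\bigcap_{\b'\into\b}H_-(\b'),
\]
we conclude that $D(\b)$ is closed and convex. The inclusion $D(\b)\subseteq H(\b)$ is built into the definition.

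The only mildly subtle point to flag is that the indexing set $\{\b'\st \b'\into\b\}$ may be large (in principle one ranges over all dimension vectors that arise as subrepresentations of the general representation of dimension $\b$), but this causes no difficulty because the closedness and convexity arguments work for intersections of arbitrary cardinality. No deeper property of quiver representations is used at this point; the result is purely a geometric observation about the shape of $D(\b)$ that will be exploited in the Virtual Saturation Theorem that follows.
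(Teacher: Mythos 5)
Your proof is correct and takes exactly the approach the paper intends: the paper states the proposition with a \qed and prefaces it with ``the following proposition follows immediately from the definition,'' and your argument simply spells out that immediacy, namely that $D(\b)$ is an intersection of a hyperplane with closed half-spaces, and closedness and convexity are preserved under arbitrary intersections.
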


The Saturation Theorem for $\a\in\ZZ^n$ follows from the original Saturation Theorem of Derksen and Weyman  (Theorem~\ref{satthm} in this paper) and the following lemmas.

\begin{lem}\label{tfae}
 Let $P(v)$ be an indecomposable projective and $\b\in\NN^n$. Then the following conditions are equivalent.
\begin{enumerate}
\item $\b_v=0$
\item $\ul\dim P(v)\in D(\b)$
\item $-\ul\dim P(v)\in D(\b)$.
\end{enumerate}
\end{lem}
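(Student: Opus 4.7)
The plan rests on a single computation: for any representation $B$ with $\ul\dim B=\b$, the Euler pairing evaluates as
\[
\langle \ul\dim P(v),\b\rangle \;=\; \b_v .
\]
This follows from Useful fact~\ref{fact}.1 together with the projectivity of $P(v)$: since $\Ext^1_Q(P(v),B)=0$ and a homomorphism $P(v)\to B$ is determined freely by the image of the idempotent generator $e_v\in B_v$, we have $\dim_\k\Hom_Q(P(v),B)=\b_v$ and $\dim_\k\Ext^1_Q(P(v),B)=0$. Once this identity is in hand, the three equivalences reduce to sign checks against the definition
\[
D(\b)=H(\b)\cap\bigcap_{\b'\into\b}H_-(\b').
\]

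For the implication $(1)\Rightarrow(2)$, assume $\b_v=0$. Then $\langle \ul\dim P(v),\b\rangle=\b_v=0$, so $\ul\dim P(v)\in H(\b)$. For any $\b'\into\b$ the subrepresentation inequality forces $\b'_v\leq\b_v=0$, and $\b'\in\NN^n$ gives $\b'_v=0$; hence $\langle \ul\dim P(v),\b'\rangle=\b'_v=0\leq 0$, showing $\ul\dim P(v)\in H_-(\b')$. The symmetric implication $(1)\Rightarrow(3)$ is even easier: $\langle -\ul\dim P(v),\b\rangle=-\b_v=0$, and for each $\b'\into\b$ we have $\langle -\ul\dim P(v),\b'\rangle=-\b'_v\leq 0$ automatically from $\b'\in\NN^n$, with no constraint on $\b_v$ needed for the half-space conditions.

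The converses $(2)\Rightarrow(1)$ and $(3)\Rightarrow(1)$ come immediately from membership in the hyperplane $H(\b)$: either $\langle \ul\dim P(v),\b\rangle=\b_v=0$ or $\langle -\ul\dim P(v),\b\rangle=-\b_v=0$, both of which force $\b_v=0$.

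I expect no real obstacle here; the only substantive ingredient is the computation $\langle \ul\dim P(v),\b\rangle=\b_v$, which is standard and already available via Useful fact~\ref{fact}.1. The proof is essentially a bookkeeping exercise once that identity is invoked.
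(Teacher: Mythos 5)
Your proof is correct and follows essentially the same route as the paper: establish $\langle \ul\dim P(v),\b\rangle=\b_v$ and then reduce everything to sign checks against the definition of $D(\b)$. The only minor variation is in deriving that identity — you use Useful fact~\ref{fact}.1 (Hom/Ext interpretation plus projectivity of $P(v)$), while the paper computes $(\ul\dim P(v))^t E\b=(E^t\ul\dim P(v))^t\b=(\ul\dim S(v))^t\b=\b_v$ via Useful fact~\ref{fact}.3; both are standard and equivalent in content.
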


\begin{proof}  If $\b_v=0$ then $\b'_v=0$ for all $\b'\into\b$.
The rest of the proof follows from the fact that for any indecomposable projective $P(v)$:

$\langle \ul\dim P(v),\b\rangle = (\ul\dim P(v))^t E \b = (E^t \ul\dim P(v))^t \b = (\ul\dim S(v))^t \b = \b_v. $
\end{proof}

\begin{lem}\label{negvert}
 If $\a\in D(\b)$ and $\a_v<0$ then there is a vertex $w$ in the support of the injective envelope of $S(v)$ so that $\b_w=0$.
\end{lem}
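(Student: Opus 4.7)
The plan is to argue by contradiction, using strong induction on the partial order on vertices where $u\leq v$ iff there is a directed path $u\to v$. I assume throughout that $\a\in D(\b)$, $\a_v<0$, and, for contradiction, that $\b_w>0$ for every $w\in\mathrm{supp}(I(v))$, and I aim to derive a contradiction.

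For the base case $v$ is a source, so $I(v)=S(v)$ and $\mathrm{supp}(I(v))=\{v\}$. Since $v$ has no incoming arrows, any codimension-one subspace of $M_v$ in the general module $M$ of dimension $\b$ extends to a sub-representation of dimension $\b-e_v$, and $(E^t\a)_v=\a_v$ for a source, so $\langle\a,\b-e_v\rangle=-\a_v>0$, contradicting $\a\in D(\b)$; hence $\b_v=0$.

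For the inductive step I assume the statement is known for all strict predecessors of $v$. Applying the contrapositive of the lemma to each strict predecessor $u$ of $v$ and using $\mathrm{supp}(I(u))\subseteq\mathrm{supp}(I(v))$, I obtain $\a_u\geq 0$ for every $u<v$. Since $I(v)$ is injective, $\Ext(M',I(v))=0$ for every representation $M'$, and Schofield's criterion for sub-representation dimensions therefore guarantees that $\b-\ul\dim I(v)\into\b$ whenever $\b$ dominates $\ul\dim I(v)$ componentwise. In that case $\langle\a,\b-\ul\dim I(v)\rangle=-\a_v>0$ contradicts $\a\in D(\b)$, giving the required contradiction.

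The main obstacle is the remaining case $\b\not\geq\ul\dim I(v)$ componentwise, which can occur only when some predecessor $w<v$ has $\ul\dim I(v)_w\geq 2>\b_w\geq 1$, i.e. there are multiple directed paths $w\to v$. Here Schofield dominance with $I(v)$ itself fails, and my plan is to iterate: use the short exact sequence $0\to S(v)\to I(v)\to\bigoplus_{a:u\to v}I(u)\to 0$ together with the induction hypothesis and Schofield's criterion applied to sub-injectives supported on proper subcollections of $\mathrm{supp}(I(v))$ to construct a sub-representation whose pairing with $\a$ is positive. Carrying out this combinatorial reduction in full generality is the main technical step.
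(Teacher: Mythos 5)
Your induction setup (choose $v$ minimal with the offending property) is the same framing the paper uses, and your base case and the deduction that $\a_u\geq 0$ for all $u<v$ are correct. You also correctly observe that $\langle\a,\ul\dim I(v)\rangle=\a_v$ and that if $\b\geq\ul\dim I(v)$ componentwise then $\b-\ul\dim I(v)\into\b$ (since $ext_Q(-,\ul\dim I(v))=0$ by injectivity of $I(v)$), yielding the contradiction $\langle\a,\b-\ul\dim I(v)\rangle=-\a_v>0$.

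However, the case $\b\not\geq\ul\dim I(v)$ is a genuine gap, as you yourself flag: you propose to iterate using the short exact sequence $0\to S(v)\to I(v)\to\coprod I(u)\to 0$ but do not carry out the combinatorics, and it is not clear that this iteration closes. The paper sidesteps the issue with a different and cleaner device: instead of $I(v)$ itself, take the image $L$ of a generic nonzero homomorphism $V\to I(v)$ (which exists because $\Hom_Q(V,I(v))\cong V_v\neq 0$ since $\b_v>0$), where $V$ is the general representation of dimension $\b$. Because $L$ is automatically a quotient of $V$, one gets $\b\onto\ul\dim L$ for free — no componentwise dominance hypothesis on $\b$ is needed — and hence $\langle\a,\ul\dim L\rangle\geq 0$. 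On the other hand $L\subseteq I(v)$ is nonzero, so its socle is $S(v)$, its injective envelope is $I(v)$, and by heredity the quotient $I(v)/L$ is injective and supported on vertices $w_i<v$; this gives $\langle\a,\ul\dim L\rangle=\a_v-\sum\a_{w_i}\leq\a_v<0$ using $\a_{w_i}\geq0$ from minimality. That single substitution — $L$ in place of $I(v)$ — is exactly the idea your argument is missing, and it removes the need for the problematic dominance condition entirely.
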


\begin{rem}
We have $w\in supp(I(v))$ if and only if there is a path from $w$ to $v$. Since $Q$ has no oriented cycles this implies $w\leq v$.
\end{rem}

\begin{proof}
Suppose not. Then there is a vertex $v$ of $Q$ where $\a_v<0$ but $\b_w>0$ for all $w\leq v$ in the support of the injective envelope of $S(v)$, that is, for all $w$ having a path to $v$. Let $v$ be minimal with this property.
We have $Hom_Q (V, I(v))=V_v\ne 0$, so there are non-zero homomorphisms from $V$ to $I(v)$. Let us choose such a homomorphism with image of maximal length, and
let $L$ be its image. Then $\b\onto\g$ where $\g=\ul\dim L$ so $D(\b)\subseteq H_+(\g)$. In other words, $\<\a,\g\>\geq0$. But an injective resolution of $L$ is given by
\[
	0\to L\to I(v)\to \coprod I(w_i)
\]
where $w_i<v$. By minimality of $v$ we have $\a_{w_i}\geq0$. So,
\[
	\<\a,\g\>=\a_v-\sum \a_{w_i}<0
\]
which is a contradiction.
\end{proof}

\begin{eg} In the Example \ref{Example} we can see that
$\a = (-1,0,-2)^t \in D((0,1,2)^t)$, but $\a_1, \a_3 < 0$.
This is possible since there is a path from $w=1$ to $v=3$ (and to $v=1$). Also, $-\ul\dim P(v_1)=(-1,-1,-2)^t\in D((0,1,2)^t)$ since $\b_1=0$.

\end{eg}

In the following lemma we compare the supports for semi-invariants on the classical representation space as defined in \ref{Supp}, with the supports of semi-invariants on presentation spaces, which will be used in the proof of the Virtual Saturation Theorem.

\begin{lem} \label{NSupEb} Let $\b\in\NN^n$. Then $supp_{\NN}(E\b) =\NN^n \cap supp_{\ZZ}(\b)$.\label{NNZZSupp}
\end{lem}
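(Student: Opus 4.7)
The plan is to use the two First Fundamental Theorems---classical (Theorem \ref{thm: FFT}) and virtual (Theorem \ref{thm: vFFT})---to reduce both inclusions to a single comparison of determinantal semi-invariants $c_V$ on $R(\a)$ with their lifts $\cc_V$ on the presentation space $R(\a,\a-E^t\a)$. The bridge between these is the canonical-presentation map $\z:R(\a)\to R(\a,\a-E^t\a)$ together with Proposition \ref{orbit prop}, which makes the $\Aut P(\a)\times\Aut P(\a-E^t\a)^{op}$-orbit of $\z(R(\a))$ open and dense in $R(\a,\a-E^t\a)$.

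For the inclusion $\NN^n\cap supp_\ZZ(\b)\subseteq supp_\NN(E\b)$, I start with $\a\in\NN^n$ such that $SI^{vir}(Q,\a)_{\chi_\b}\noteq0$. Since $\a\in\NN^n$, Lemma \ref{unique} gives $\mu=\a$ and $\g=0$, so the Virtual First Fundamental Theorem forces the combined weight $\s\in\NN^n$ to satisfy $\<\a,\s\>=0$ and realises every class in $SI^{vir}(Q,\a)_{\chi_\b}$ as a $\k$-linear combination of $\cc_V^{vir}$ with $\ul\dim V=\b$ exactly. Pick one such $V$ with $\cc_V^{vir}\noteq0$. By Corollary \ref{cor516}, $\cc_V$ is then a non-zero polynomial on $R(\a,\a-E^t\a)$; since $\cc_V$ is a semi-invariant and the orbit of $\z(R(\a))$ is open and dense by Proposition \ref{orbit prop}, $\cc_V$ does not vanish on $\z(R(\a))$. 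By Remark \ref{rem:zeta sends Cv to cv}, $c_V=\cc_V\circ\z\noteq0$ on $R(\a)$, and its character is $\chi_{E\b}$ by Theorem \ref{thm: FFT}. Hence $\a\in supp_\NN(E\b)$.

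For the reverse inclusion $supp_\NN(E\b)\subseteq\NN^n\cap supp_\ZZ(\b)$, start with $0\noteq f\in SI(Q,\a)_{\chi_{E\b}}$. By the classical FFT, $f$ is a linear combination of $c_{V_i}$'s of character $\chi_{E\b}$ whose supports are contained in the support of $\a$. Running the weight-adjustment trick from the proof of Theorem \ref{c_Vspangen} (setting $\g_i:=E\b-E\ul\dim V_i\in\NN^n$ and replacing $V_i$ by $V_i\oplus I(\g_i)$, using $c_{I(\g_i)}=1$ on $R(\a)$) yields modules with $\ul\dim(V_i\oplus I(\g_i))=\b$ exactly. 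So some module $V$ with $\ul\dim V=\b$ has $c_V\noteq0$ on $R(\a)$, hence $\cc_V\noteq0$ on $R(\a,\a-E^t\a)$ since $c_V=\cc_V\circ\z$. By Corollary \ref{cor516} this upgrades to $\cc_V^{vir}\noteq0$ on $R^{vir}(\a)$, and Proposition \ref{virweightprop} identifies its combined character as $\chi_\b$, proving $\a\in supp_\ZZ(\b)$.

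The main obstacle I anticipate is the bookkeeping around character indeterminacy: the character $\chi_{E\b}$ does not generally determine the weight vector $E\b$ uniquely (cf.\ Remark \ref{Indeterm}), so I have to make sure both directions actually produce a determinantal generator whose weight is $\b$ on the nose rather than merely in the same coset. The injective-module correction $V\rightsquigarrow V\oplus I(\g)$ is precisely what resolves this, and it is the one place in the argument that requires more than citing the virtual FFT and the openness of the $\z$-orbit.
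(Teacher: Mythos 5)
Your proof is correct and follows essentially the same overall strategy as the paper: both directions are brokered through the determinantal generators and the identity $c_V=\cc_V\circ\z$. The one genuine divergence is in the inclusion $\NN^n\cap supp_\ZZ(\b)\subseteq supp_\NN(E\b)$. The paper works directly in a presentation space $R(\g_0,\g_1)$: it notes that ``$\cc_V^{(\g_0,\g_1)}\noteq0$'' and ``$f$ is a monomorphism'' are each non-empty Zariski-open conditions on $f\in R(\g_0,\g_1)$, intersects them, and reads off a module $M=\coker f$ with $\ul\dim M=\a$ and $\Hom_Q(M,V)=\Ext^1_Q(M,V)=0$, hence $c_V\ne0$ on $R(\a)$. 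You instead pass through Proposition \ref{orbit prop}: the $\Aut P(\a)\times\Aut P(\a-E^t\a)^{op}$-orbit of $\z(R(\a))$ is open and dense, so the nonzero semi-invariant $\cc_V$ cannot vanish there, and equivariance pulls this back to $c_V\ne0$ on $R(\a)$. These are interchangeable instantiations of the same density principle; your route leans a bit more on the $\z$-machinery of Section \ref{sec4}, while the paper's is self-contained within the presentation space. Your explicit treatment of the character indeterminacy in the other direction --- running the injective correction $V\rightsquigarrow V\oplus I(\g_i)$ from the proof of Theorem \ref{c_Vspangen} to force $\ul\dim V=\b$ on the nose --- is more careful than the paper, which simply asserts the existence of such a $V$ after invoking the FFT. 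So: correct, same approach, with one direction implemented via Proposition \ref{orbit prop} rather than the paper's direct Zariski argument, and with a helpful unpacking of the weight-coset issue that the paper leaves implicit.
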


\begin{proof}
$(\subseteq)$ Let $\a \in supp_{\NN}(E\b)$. Then $\a \in \NN^n$ and there exists a non-zero semi-invariant $p_V$ on $R(\a)$ of weight $\chi_{E\b}$. Furthermore, (by \ref{thm: FFT})
$$c_V= \det\Hom_Q(p_{-}, V): R(\a) \to \k,$$
for some module $V$ with $\ul\dim V =\b$, and the canonical projective presentation for $M \in R(\a)$:
$$ P_1 \xrightarrow{p_M} P_0 \to M \to 0.$$
Let $\g_0 =\ul\dim(P_0/radP_0)$ and $\g_1 =\ul\dim(P_1/radP_1)$. Then
$p_M \in \Hom_Q(P(\g_1),P(\g_0))$ $ = R(\g_0,\g_1,)$ and $\cc_V = \cc^{(\g_0,\g_1)}_V$ can be viewed as a non-zero semi-invariant for the projective decomposition $(\g_0,\g_1)$ of $\a$. Hence $\a \in \NN^n \cap supp_{\ZZ}(\b)$.

$(\supseteq)$ Conversely, let $\a \in \NN^n \cap supp_{\ZZ}(\b)$ and let $\cc^{(\g_0,\g_1)}_V$ be a non-zero semi-invariant. Also, since $\a \in \NN^n$, there is a projective presentation $0\to P_1\to P_0 \to M \to 0$, where $\ul\dim M = \a$, $P_1=P(\g'_1), P_0 = P(\g'_0)$, where $(\g'_0,\g'_1)$ is a projective decomposition of $\a$. By stabilization we may assume that $\g_0=\g_0'$ and $\g_1=\g_1'$. Since $\cc^{(\g_0,\g_1)}_V$
 is nonzero, there exists a map $f:P_1\to P_0$ so that $\Hom_Q(f,V)$ is an isomorphism. Also 
 $p_M:P_1\to P_0$ is a monomorphism. Since both of these conditions are Zariski open there exists a monomorphism $f:P_1\to P_0$ so that 
 $\Hom_Q(f,V)$ is an isomorphism. This implies that $\Hom_Q(M,V)=\Ext_Q(M,V)=0$
  for $M=P_0/fP_1$. So, $\a=\ul\dim M\in supp_\NN(E\b)$.

\end{proof}

\begin{thm}[Virtual Saturation Theorem]\label{virsatthm}
Let $\b \in \NN^n$.  Then
$$
supp_{\ZZ}(\b)=\ZZ^n\cap D(\b).
$$
\end{thm}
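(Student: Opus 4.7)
The plan is to prove both inclusions using Theorem~\ref{satthm} and Lemma~\ref{NSupEb} to handle the nonnegative case, and to bridge to all of $\ZZ^n$ by shifting with $\ul\dim P(w)$ for vertices $w$ at which $\b$ vanishes. The underlying observation is that Lemma~\ref{tfae} together with the identity $\<\ul\dim P(w),\b'\>=\b'_w$ shows that both $D(\b)$ and $supp_\ZZ(\b)$ are stable under adding or subtracting $\ul\dim P(w)$ whenever $\b_w=0$, using that any $\b'\into\b$ satisfies $\b'_w\le\b_w=0$.

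For $supp_\ZZ(\b)\subseteq\ZZ^n\cap D(\b)$, I would take $\a\in supp_\ZZ(\b)$ and apply Proposition~\ref{homext} to produce a module $M$, a projective $P=\coprod P(v)^{n_v}$, and a representation $V$ with $\ul\dim V=\b$ such that $\a=\ul\dim M-\ul\dim P$ and $\Hom_Q(P,V)=\Hom_Q(M,V)=\Ext_Q(M,V)=0$. The vanishing $\Hom_Q(P(v),V)\cong V_v=0$ whenever $n_v>0$ forces $\b_v=0$ at every such vertex, and hence $\<\ul\dim P,\b\>=\sum_v n_v\b_v=0$ and likewise $\<\ul\dim P,\b'\>=0$ for every $\b'\into\b$ (since $\b'_v\le\b_v=0$ there). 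The identity $\<\ul\dim M,\b\>=0$ is immediate from the $\Hom$-$\Ext$ vanishing, and for $\b'\into\b$ I would choose a subrepresentation $V'\subseteq V$ of dimension $\b'$ and apply $\Hom_Q(M,-)$ to $0\to V'\to V\to V/V'\to 0$: the vanishing $\Hom_Q(M,V)=0$ restricts to $\Hom_Q(M,V')=0$, so $\<\ul\dim M,\b'\>=-\dim\Ext_Q(M,V')\le0$. Combining the $M$ and $P$ contributions yields $\<\a,\b\>=0$ and $\<\a,\b'\>\le0$, i.e., $\a\in D(\b)$.

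For the reverse inclusion, I would induct on the largest index $v$ (in the fixed total order on $Q_0$) at which $\a_v<0$. Given such a $v$, Lemma~\ref{negvert} delivers a vertex $w$ with a directed path $w\to v$ and $\b_w=0$; the shift $\a+|\a_v|\,\ul\dim P(w)$ remains in $D(\b)$ by the opening observation, stays nonnegative at every index above $v$ (where $\a$ was already nonnegative and only increases, since $\ul\dim P(w)\in\NN^n$), and becomes nonnegative at $v$ because $P(w)_v\ge1$. Finitely many such shifts produce $\a'\in\NN^n\cap D(\b)=supp_\NN(E\b)$, which sits inside $supp_\ZZ(\b)$ by Lemma~\ref{NSupEb}. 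The main obstacle is the reverse motion: I must know that $supp_\ZZ(\b)$ is closed under subtracting $\ul\dim P(w)$ when $\b_w=0$. Proposition~\ref{homext} handles this---if $(M,P)$ witnesses $\a'\in supp_\ZZ(\b)$, then $(M,\,P\coprod P(w))$ witnesses $\a'-\ul\dim P(w)$, since $\Hom_Q(P(w),V)=V_w=0$ comes for free from $\b_w=0$---so iterating the subtraction undoes the accumulated shifts and delivers $\a\in supp_\ZZ(\b)$.
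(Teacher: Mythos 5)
Your proof is correct, and it takes a genuinely different route from the paper's in the forward inclusion while using similar ideas (but finer bookkeeping) in the reverse one. The paper's argument first treats the sincere case outright (where $D(\b)\subseteq\NN^n$) and otherwise performs a single large shift $\a\mapsto\a+m\g$ with $\g=\ul\dim P$ the aggregate of all projective covers at vertices outside $\operatorname{supp}(\b)$; for $supp_\ZZ(\b)\subseteq\ZZ^n\cap D(\b)$ it shifts $\a$ into $\NN^n$, invokes the classical Saturation Theorem via Lemma~\ref{NSupEb}, then shifts back using convexity of $D(\b)$ and Lemma~\ref{tfae}. You instead verify $\<\a,\b\>=0$ and $\<\a,\b'\>\leq0$ directly from the $\Hom$/$\Ext$ vanishing in Proposition~\ref{homext}, splitting each pairing as $\<\ul\dim M,-\>-\<\ul\dim P,-\>$ and treating the two terms separately. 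This bypasses the detour through the nonnegative case entirely and makes the mechanism ``$\Hom$/$\Ext$ vanishing is precisely membership in $D(\b)$'' transparent. For the reverse inclusion your one-vertex-at-a-time induction on the largest negative coordinate is essentially the paper's $m\g$-shift unrolled into individual $P(w)$-shifts located via Lemma~\ref{negvert}, and the ``undoing'' step is the same observation about enlarging the projective $P$ in Proposition~\ref{homext} (the paper appends $P^m$, you append the $P(w_k)^{m_k}$'s).

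One small omission you should repair: in the forward direction, when you choose a subrepresentation $V'\subseteq V$ of dimension $\b'$, you are tacitly assuming that $V$ is general of dimension $\b$. Proposition~\ref{homext} and the corollary defining $supp_\ZZ(\b)$ only hand you \emph{some} $V$ with $\cc_V^{vir}\neq0$, and a non-general $V$ need not admit a subrepresentation of every dimension $\b'\into\b$. The fix is a one-liner: cite Remark~\ref{OpenCondCv} (not Corollary~\ref{cor to VirSat}, whose proof relies on the theorem you are proving) to upgrade ``some $V$'' to ``general $V$.'' With that added, the argument is complete.
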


\begin{proof}  We proceed in steps.

\ul {Claim 1.}
$\NN^n \cap supp_{\ZZ}(\b) = \NN^n \cap D(\b).$

\noindent This follows by the Lemma \ref{NNZZSupp} and by the Derksen-Weyman Saturation Theorem~\ref{satthm}.

\ul{Claim 2.} Suppose $\b$ is sincere. Then $supp_{\ZZ}(\b)=\ZZ^n\cap D(\b).$ 

\noindent In that case Lemma~\ref{suppcor} states that  $supp_\ZZ(\cc_V)\subset \NN^n$ whenever $\ul\dim V = \b$.
 So $$supp_{\ZZ}(\b) = \bigcup_{V\in R(\b)}supp_\ZZ(\cc_V) = \NN^n \cap supp_{\ZZ}(\b).$$
By Lemma \ref{NSupEb} and the classical saturation theorem \ref{satthm}, $$\NN^n\cap D(\b) = supp_{\NN}(E\b)= \NN^n\cap D(\b).$$
But $D(\b)\subset\NN^n$ by Lemma~\ref{negvert}. So 
$\NN^n\cap D(\b)= \ZZ^n\cap D(\b)$ proving the claim.

For the remainder of the proof, we use the fact that $D(\b)$ is closed under addition.
If $\b$ is not sincere, let $P$ be the sum of projective covers (comp. \cite{[ASS]}, section I.5) of all vertices not in the support of $\b$ and let $\g=\ul\dim P$. Then $\g^tE\b=0$. For any $\a$ in the support of $\cc_V$, Proposition~\ref{homext} implies there is an $m\geq0$ such that $\a+m\g\in\NN^n$ and also lies in $supp_\ZZ(\cc_V)$.  Hence  $\a+m\g\in D(\b)$. But Lemma \ref{tfae} says that $-\g\in D(\b)$. So $\a=(\a+m\g)+m(-\g)\in D(\b)$.

Conversely, suppose that $\a\in D(\b)$ with $\b$ not sincere. Then Lemmas \ref{tfae} and \ref{negvert} imply that $\a+m\g\in\NN^n\cap D(\b) = supp_{\NN}(E\b)$ for some integer $m$. So there are modules $M\in R(\a+m\g)$ and $V\in R(\b)$ so that
$\Hom_Q(M,V)=0=\Ext^1_Q(M,V).$
Then $P^m, M$ satisfy the conditions of Proposition~\ref{homext} making $\a= (\a+m\g)-m\g$ an element of the support of $\cc_V$.
\end{proof}
\begin{cor}\label{cor to VirSat}
For any nonzero $\b\in\NN^n$,
$supp_\ZZ(\b)=supp_\ZZ(C_V)$
for $V$ in a nonempty open subset of $R(\b)$.
\end{cor}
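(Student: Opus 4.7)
The plan is to exhibit a nonempty Zariski open subset $U \subseteq R(\b)$ on which $supp_\ZZ(C_V) = supp_\ZZ(\b)$. The containment $supp_\ZZ(C_V) \subseteq supp_\ZZ(\b)$ is immediate from the definitions for every $V \in R(\b)$, so all the work lies in the reverse direction. My strategy is to reduce an \emph{a priori} infinite family of open conditions on $V$ to a finite one, by showing that $supp_\ZZ(\b)$ is a finitely generated monoid and that $supp_\ZZ(C_V)$ is automatically closed under the monoid operation.

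First I would verify that $supp_\ZZ(C_V)$ is closed under addition. Given $\a_1, \a_2 \in supp_\ZZ(C_V)$, Proposition \ref{homext} furnishes modules $M_i$ and projectives $P_i$ with $\a_i = \ul\dim M_i - \ul\dim P_i$ satisfying $\Hom_Q(M_i,V) = \Ext_Q(M_i,V) = \Hom_Q(P_i,V) = 0$. Then $M_1 \oplus M_2$ and $P_1 \oplus P_2$ witness $\a_1 + \a_2 \in supp_\ZZ(C_V)$ via the same proposition, since direct sums preserve vanishing of $\Hom$ and $\Ext$.

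Next, by the Virtual Saturation Theorem, $supp_\ZZ(\b) = \ZZ^n \cap D(\b)$. The set $D(\b)$ is a rational polyhedral cone, since its defining half-space constraints from $\b' \into \b$ reduce to the finitely many extremal ones by Schofield's theorem on general subrepresentations (equivalently, this can be read off from the Derksen--Weyman saturation picture). Gordan's lemma then provides finite generators $\a_1, \ldots, \a_r$ for the monoid $supp_\ZZ(\b)$.

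Finally, Remark \ref{OpenCondCv} shows that each locus $U_i := \{V \in R(\b) : C_V^{vir} \neq 0 \text{ on } R^{vir}(\a_i)\}$ is Zariski open in $R(\b)$, and nonempty because $\a_i \in supp_\ZZ(\b)$. Since $R(\b)$ is an irreducible variety (an affine space), the finite intersection $U = \bigcap_{i=1}^r U_i$ is a nonempty open subset. For any $V \in U$, $supp_\ZZ(C_V)$ contains every $\a_i$ and, being closed under addition, contains the entire monoid they generate, which is all of $supp_\ZZ(\b)$. The main delicate point is the finite generation of $supp_\ZZ(\b)$: if the polyhedral structure of $D(\b)$ is not directly at hand, one argues that only finitely many of the inequalities $\<{-},\b'\>\leq 0$ are extremal, using either Schofield's subrepresentation theorem or the finite-dimensionality of $SI(Q,\cdot)_{\chi_{E\b}}$ combined with saturation.
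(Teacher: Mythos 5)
Your argument follows the paper's overall strategy: identify $supp_\ZZ(\b)=\ZZ^n\cap D(\b)$ via the Virtual Saturation Theorem, select finitely many integer vectors $\a_i$ in this set, take the open loci $U_i\subset R(\b)$ provided by Remark~\ref{OpenCondCv}, and intersect them (using irreducibility of the affine space $R(\b)$). Where you genuinely improve on the paper's write-up is in justifying the final step. The paper chooses the $\a_i$ to be integer points on the \emph{extremal rays} of the cone $D(\b)$ and then asserts that $supp_\ZZ(C_V)\supseteq\{\a_i\}$ forces $supp_\ZZ(C_V)=\ZZ^n\cap D(\b)$; but integer points on extremal rays do not, in general, generate the lattice cone of a rational polyhedral cone as a monoid (think of the cone on $(1,0)$ and $(1,2)$ in $\ZZ^2$, which omits $(1,1)$ from the sub-monoid those two generate), and no closure property of $supp_\ZZ(C_V)$ is invoked there to bridge the gap. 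Your two supplementary observations — that $supp_\ZZ(C_V)$ is closed under addition, via the direct-sum argument applied to the witnesses $(M_i,P_i)$ from Proposition~\ref{homext}, and that $\ZZ^n\cap D(\b)$ is a finitely generated monoid by Gordan's lemma once $D(\b)$ is recognized as a rational polyhedral cone — replace the ray representatives with an honest monoid generating set and then let closure under addition finish the job. This is the same route with the hidden lemma made visible, and as such is the more complete version of the argument.
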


\begin{proof}
By the virtual saturation theorem, $supp_\ZZ(\b)=\ZZ\cap D(\b)$. By definition, $D(\b)=H(\b)\cap \bigcap_{\b'\into\b} H_-(\b')$.  This  closed cone is the covex hull of a finite number of rays. These rays lie on intersections of transverse hyperplanes defined over $\QQ$. So they contain elements of $\QQ^n$ and therefore elements of $\ZZ^n$. By Remark \ref{OpenCondCv}, for each of these integer vectors $\a_i$, there is an open subset $U_i\subset R(\b)$ so that $\a_i\in supp_\ZZ(V)$ for all $V\in U_i$. Then, for any $V\in\cap U_i$,  $supp_\ZZ(V)$ contains all of the $\a_i$ and is therefore  equal to $\ZZ\cap D(\b)=supp_\ZZ(\b)$.
\end{proof}

\section{ Simplicial complex of generalized cluster tilting sets}

This collaboration began with the surprising discovery of combinatorial connections between cluster tilting objects and cluster categories~\cite{[BMRRT]}, the supports of semi-invariants~\cite{[DW1]}, and chain resolutions of nilpotent groups~\cite{[IO]}.  In this section we begin to unveil these connections, first reminding the reader of some basic results and definitions concerning cluster categories in section~\ref{ClCat}.  In section~\ref{ClCatIntVec} we relate objects in cluster categories to Schur roots and negative projective roots.  With these connections established, in section~\ref{ContMap} we construct a continuous monomorphism from a subcomplex of virtual semi-tilting sets onto a dense subset of the $(n-1)$-sphere.

In the next section, at last, we restrict our considerations from general quivers to Dynkin quivers, and prove that in this special case the above continuous monomorphism is a homeomorphism, providing a simplicial decomposition of the sphere with codimension one skeleta given by the domains of the virtual semi-invariants.



\subsection{ Cluster categories and cluster tilting objects} \label{ClCat}

Let $Q$ be a finite quiver with no oriented cycles. The associated  cluster category $\mathcal C_Q$ was defined in \cite{[BMRRT]} as a special orbit category of the  associated bounded derived category $\mathcal D^b_Q$ in the following way.

Let $\mmod\k Q$ be the category of finitely generated modules over the path algebra $\k Q$ and let $\tau$ be the Auslander-Retien translation functor. Since $\k Q$ is hereditary, $\tau$ is a functor  
$\mmod \k Q \to \mmod \k Q$ which induces an equivalence of full subcategories
$$\{ \k Q\text{-modules{ \small w\!/\!o }projective summands}\} \xrarrow{\tau} \{ \k Q \text{-modules{ \small w\!/\!o }injective summands}\}.$$


An important fact is that the Auslander-Reiten functor can be extended to an auto-equivalence of the associated derived category $\mathcal D^b_Q$ which we will describe now.

Let $\mathcal D^b_Q:=\mathcal D^b(\mmod \k Q)$ be the derived category of bounded complexes in $\mmod \k Q$.
 Instead of recalling the general definition of the derived categories, we will describe objects and morphisms, which is quite easy since the algebra $\k Q$ is hereditary: the indecomposable complexes are isomorphic to stalk complexes, hence all indecomposable objects can be described as shifts of the indecomposable modules: 
$$\ind \mathcal D^b_Q=
\cup_{i\in \ZZ}(\ind \k Q)[i].$$
The morphisms in  $\mathcal D^b_Q$ can also be easily described:
for all $M,N \in \mmod \k Q$\\
$\Hom_{\mathcal D^b_Q}(M,N) = \Hom_Q(M,N);\ \ \ \  \Hom_{\mathcal D^b_Q}(M,N[1]) = \Ext^1_Q(M,N);$\\
$\Hom_{\mathcal D^b_Q}(M,N[i])\!=\!0, \ i\!\neq\!0,\!1; \ \ \Hom_{\mathcal D^b_Q}(X,Y)\! =\!\Hom_{\mathcal D^b_Q}(X[i],Y[i]), \ i\!\in\!\ZZ, X,\!Y\!\!\in \!\mathcal D^b_Q.$

Let $\mathcal D^b_Q \xrarrow{\tau} \mathcal D^b_Q$ be the automorphism of the category induced by the Auslander-Reiten translation functor, which we also  call the Auslander-Reiten, or AR, functor. 
Then  the composition functor $\mathcal D^b_Q \xrarrow{[1]} \mathcal D^b_Q \xrarrow{\tau^{-1}} \mathcal D^b_Q$ is an auto-equivalence of $D^b_Q$. 


\begin{defn}\cite{[BMRRT]} \emph{The cluster category} $\mathcal C_Q$ for a quiver $Q$ is the orbit category 
$$\mathcal C_Q:= \mathcal D^b_Q/(\tau^{-1}[1]) $$ 
of the derived category $\mathcal D^b_Q$,  under the action of $\tau^{-1}[1]$.
\end{defn}
\begin{rem} A set of representatives of the indecomposable $\mathcal C_Q$ objects, which are $(\tau^{-1}[1])$-orbits, may be chosen to be in
$$\ind \k Q \cup \{P(v)[1]\}_{v\in Q_0},$$ 
the set of indecomposable $\k Q$-modules and shifts $P(v)[1]$ of the indecomposable projective $\k Q$-modules $P(v)$.
\end{rem}

Some particularly important objects in the cluster category are the \emph{cluster tilting objects}, which are essential in the ``cluster algebra/cluster category'' relations. Their definition extends the classical definition \cite{[HU]}, \cite{[U]} of a \emph{tilting module} as a module $T= \smallcoprod _{i=1}^nT_i$
 of nonisomorphic indecomposable modules $T_i$ so that $\Ext(\smallcoprod T_i, \smallcoprod T_i)=0$.
\begin{defn} An object $T= \smallcoprod _{i=1}^nT_i$ of the cluster category $\mathcal C_Q$ is called a \emph{cluster tilting object} if $\Ext^1_{\mathcal C_Q}(T,T)=0$, where $T_i$ are indecomposable and pairwise non-isomorphic.
\end{defn}

\subsection{ A relation between objects of cluster categories and integral vectors} \label{ClCatIntVec}
To each object of $\mathcal C_Q$ which has a representative in the module category $\mmod \k Q$ we associate the dimension vector $\ul\dim \,M\in \NN^n\times 0\subset \ZZ^n \times\ZZ$, and to  each object of $\mathcal C_Q$ which has a representative shifted projective we associate the vector $(\ul\dim \,P)[1]\in \NN^n\times 1\subset \ZZ^n \times\ZZ$.

\begin{defn}  A \emph{Schur representation} is a $\k Q$-module $M$ so that $\End_{Q}(M)\!=\!\k$.
\end{defn}

To translate from cluster category objects to integral vectors, first we make the translation from modules to Schur roots, using the following theorem.

\begin{thm} \cite{[Kac82]} A vector $\a\in\NN^n$ is a Schur root if and only if there exists a Schur representation $M$ with $\ul\dim\,M=\a$.

\begin{rem} (1) The Auslander-Reiten functor $\tau$,
on the level of dimension vectors, $\t:\ZZ^n\cong\ZZ^n$ is given by $\t=-E^{-1}E^t$. \\
(2) 
$\<\a,\b\>=\b^tE^t\a=-\<\b,\t\a\>$
 \\
(3) $\<\t\a,\t\b\>=\<\a,\b\>$\\
(4) By the properties of the translation functor, this linear map  sends Schur roots and negative projective roots to Schur roots and negative injective roots.
\end{rem}

\end{thm}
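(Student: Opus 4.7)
The statement naturally splits into two directions, one routine and the other the substantive content.

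For the direction $(\Leftarrow)$, suppose $M$ is a representation with $\ul\dim M=\a$ and $\End_Q(M)=\k$. The function $N\mapsto \dim_\k\End_Q(N)$ on $R(\a)$ is upper semicontinuous, so the locus where it attains its minimum value is Zariski open. Since $\dim_\k\End_Q(N)\ge 1$ always (the identity is an endomorphism) and $\dim_\k\End_Q(M)=1$, this minimum is $1$ and is attained on a nonempty open $U\subseteq R(\a)$. For $N\in U$, $\End_Q(N)=\k$ is a field, so $N$ admits no nontrivial idempotent and is therefore indecomposable. Hence the general representation in $R(\a)$ is indecomposable, i.e.\ $\a$ is a Schur root.

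For the direction $(\Rightarrow)$, assume $\a$ is a Schur root, so the general $M\in R(\a)$ is indecomposable; the goal is to produce at least one Schur representation of dimension $\a$. My plan is to analyze the $G(\a)$-action on $R(\a)$. Since $\Aut_Q(M)$ is open in $\End_Q(M)$ whenever it is nonempty, the stabilizer satisfies $\dim\mathrm{Stab}(M)=\dim_\k\End_Q(M)$, and the standard identification of the normal space to the $G(\a)$-orbit at $M$ with $\Ext^1_Q(M,M)$ yields
\[
\dim_\k\End_Q(M)-\dim_\k\Ext^1_Q(M,M)=\<\a,\a\>.
\]
For $M$ in the indecomposable open locus, Fitting's lemma forces $\End_Q(M)$ to be a finite-dimensional local $\k$-algebra, and because $\k$ is algebraically closed we may write $\End_Q(M)=\k\oplus J$ with $J$ the nilpotent radical. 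It remains to show that $J=0$ on a nonempty open subset.

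Following Kac, I would consider the incidence variety
\[
Z=\{(M,\varphi)\st M\in R(\a),\ \varphi\in\End_Q(M)\text{ a nonzero nilpotent}\},
\]
with projection to $R(\a)$. If $J\noteq 0$ generically, the fibers over the indecomposable locus have positive dimension $d\ge 1$, so $\dim Z=\dim R(\a)+d$. From a nonzero nilpotent $\varphi$ one extracts the filtration $M\supseteq\im\varphi\supseteq\im\varphi^2\supseteq\cdots$, together with its associated graded, producing a decomposition $\a=\a'+\a''$ with $\a',\a''\in\NN^n$ nonzero and a family of representations of these smaller dimensions parametrized by an open piece of $Z$. A dimension count against $R(\a')\times R(\a'')$, carried out by induction on the height $\sum_v\a_v$, produces a family of decomposable representations of dimension $\a$ that is too large to avoid a generic point of $R(\a)$, contradicting the indecomposability of the general $M$ unless $d=0$. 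Hence the generic endomorphism algebra is $\k$, and the general $M\in R(\a)$ itself is a Schur representation.

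The main obstacle is the last step: extracting a good sub/quotient decomposition from the nilpotent endomorphism and carrying out the dimension count cleanly enough to contradict generic indecomposability. This is where Kac's original argument in~\cite{[Kac82]} uses the inductive structure on the height of $\a$ together with the positive-root combinatorics of $Q$; a purely local approach fails because indecomposability alone does not force $\End=\k$ for an arbitrary finite-dimensional algebra, and the hereditary nature of $\k Q$ over algebraically closed $\k$ is essential to push the induction through.
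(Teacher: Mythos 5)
The paper does not prove this statement; it is cited to Kac's 1982 paper \cite{[Kac82]} without argument, so there is no proof in the text to compare against. Taking your proposal on its own terms: your $(\Leftarrow)$ direction is complete and correct. Upper semicontinuity of $N\mapsto\dim_\k\End_Q(N)$ on $R(\a)$, together with the existence of one representation with $\dim_\k\End_Q(M)=1$, forces the generic value to be $1$, and a local ring equal to $\k$ has no nontrivial idempotents, so the general $N$ is indecomposable. That part needs no adjustment.

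The $(\Rightarrow)$ direction is where the genuine content lies, and as you yourself acknowledge, you have not actually carried it out. The plan --- passing to the incidence variety $Z$ of pairs $(M,\varphi)$ with $\varphi$ a nonzero nilpotent endomorphism, extracting a decomposition $\a=\a'+\a''$ from the filtration by powers of $\varphi$, and running a dimension count against $R(\a')\times R(\a'')$ --- is the right shape and matches Kac's strategy, but the step you flag as the obstacle is precisely the step. Two concrete gaps in the sketch: you need to control how $\a',\a''$ vary as $(M,\varphi)$ varies in $Z$ (stratifying $Z$ by the type $(\a',\a'')$ and arguing on each stratum), and the dimension inequality you invoke must actually be proved rather than asserted; it rests on the hereditary property via the Euler-form identity $\dim_\k\Hom_Q-\dim_\k\Ext^1_Q=\<\cdot,\cdot\>$ and on Schofield's estimates for the dimension of the locus of representations admitting a subrepresentation of prescribed dimension vector. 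Without these the contradiction with generic indecomposability does not close. So the proposal correctly disposes of one implication and correctly identifies the route for the other, but leaves the substantive half of the theorem unproved.
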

\subsection{ Virtual semi-tilting sets} 
In this section we consider only the dimension vectors of certain indecomposable modules and shifted indecomposable projective modules, which is a subset of the
representatives of the indecomposable objects of cluster category $\mathcal C_Q$. Specifically, we consider Schur roots and shifted projective roots $p(v)[1]=(E^t)^{-1}(e_v)[1]$.

We prove the necessary corollaries to the Generic Decomposition Theorem in order to construct the Tilting Triangulation of Section~\ref{finitecase} and exhibit its properties.


\begin{defn}\label{VirSTilt}
A {\em partial virtual semi-tilting set} for a quiver $Q$ with $n$ vertices is a collection of distinct Schur roots and shifted indecomposable projective roots
$$\{\b_{1},\!\cdots\!,\b_{k}, p(v_{k+1})[1],\!\cdots\!,p(v_m)[1]\}$$ 
 with $ext_{Q} (\b_i,\b_j)=0$ and $hom_Q(p(v_i),\b_j)=(\b_j)_{v_i}=0$ for all $i\neq j$.\\
 A {\em virtual semi-tilting set} is partial virtual semi-tilting set with $n$ elements.
\end{defn}
\begin{rem}  We point out similarities and differences between cluster tilting objects and virtual semi-tilting sets.
\begin{itemize}
\item Both cluster tilting objects and virtual semi-tilting sets may include shifted projectives (which is not the case with the classical tilting modules).
\item Both cluster tilting objects and virtual semi-tilting sets must have all of their indecomposable components  not extend each other.
\item A module $M$ and a shifted projective $P[1]$ do not extend each other if and only if $\Hom_Q(P,M)=0$. This agrees with $hom_Q(p(v_i),\b_j)=0$, the same condition in the definition of partial virtual semi-tilting sets \ref{VirSTilt}.
\item The prefix \emph{semi-} is used to emphasize that $ext_Q(\b_i,\b_i)$ may be nonzero, while for any tilting or cluster tilting object we have $\Ext_{\mathcal C_Q}(T_i,T_i)=0$.
\end{itemize}
\end{rem}
\begin{eg}
Some examples of virtual semi-tilting sets are:\\
 (1) The set of indecomposable projective roots\\
 (2) The shifted projective roots \\
 (3) The roots corresponding to the indecomposable injective modules\\
 (4) Each null root for  an extended Dynkin diagram forms a partial virtual semi-tilting set, but no module of that dimension can be a partial cluster tilting object.\\
 (5) In the example \ref{Example} the following sets of roots form virtual semi-tilting sets, actually maximal such sets:
 $\{(-1,-1,-2),(0,1,1)\}$ and $\{\a=(1,2,2)\}$. Note that $ext(\a,\a)\neq0.$
  \end{eg}
 \begin{rem}
 (a) Note that every subset of a virtual semi-tilting set is a partial virtual semi-tilting set. Thus:\\ 
(b) The partial virtual semi-tilting sets form a simplicial complex. 

Recall that a \emph{simplicial complex} is a collection $K$ of finite nonempty sets $\delta$ called \emph{simplicies} so that any nonempty subset of a simplex is also a simplex. A simplex of $K$ with $p+1$ elements is called a $p$-\emph{simplex} of $K$ and the set of $p$-simplices of $K$ is denoted $K_p$.
\end{rem}

\subsection{ Complex of virtual semi-tilting sets} \label{ContMap} We will construct a simplicial complex $\cT(Q)$ and a subcomplex $\cT'(Q)$. We will see that $\cT(Q)$ is $(n-1)$-dimensional, where $n$ is the number of vertices of $Q$, and there is a continuous mapping of the geometric realization of $\cT(Q)$ to the standard $(n-1)$-sphere $S^{n-1}$. When restricting to the subcomplex $\cT'(Q)$ we will get a continuous monomorphism
\[
	\ll:|\cT'(Q)|\to S^{n-1}
\]
whose image is dense. This implies that, if $\cT'(Q)$ is a finite simplicial complex, $\ll$ is a homeomorphism.

\begin{defn} Let $Q$ be a quiver w/o oriented cycles.  The {\em complex of virtual semi-tilting sets}, $\cT(Q)$, is the simplicial complex whose simplices are the partial virtual semi-tilting sets of the Schur roots and the shifted indecomposable roots.
\end{defn}

We will use the Generic Decomposition Theorem~\ref{thm:generic decomposition} and the following result of Schofield to show that this simplicial complex is $(n-1)$ dimensional.

\begin{thm}\cite{[S1]} \label{decomposition of ma}
Let $Q$ be a quiver with $n$ vertices and no oriented cycles. \\
(1) Any multiple $m\a$ of a Schur root $\a$ is either a Schur root or decomposes generically as a sum of $m$ copies of $\a$.\\
(2) If $\a=\sum \b_i$ is a generic decomposition of $\a\in\NN^n$ then $m\a=\sum (m\b_i)$ is the generic decomposition of $m\a$ where $(m\b_i)$ denotes either a single Schur root or a sum of $m$ copies of $\b_i$ in the case when $m\b_i$ is not a Schur root.

\end{thm}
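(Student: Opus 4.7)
The plan is to establish part (1) by a case analysis on whether $ext_Q(\alpha,\alpha)$ vanishes, and to deduce part (2) from (1) via upper semicontinuity of $\dim \Ext^1$ together with the uniqueness clause of Theorem~\ref{classical generic decomp}.

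For part (1), suppose first that $ext_Q(\alpha,\alpha) = 0$. If $V$ is a general representation of dimension $\alpha$, then $V$ is Schur, so $V^{\oplus m}$ has endomorphism ring $M_m(\k)$, and moreover $\Ext^1_Q(V^{\oplus m}, V^{\oplus m}) = 0$. By Voigt's lemma the $G(m\alpha)$-orbit of $V^{\oplus m}$ is open in $R(m\alpha)$, so the general representation of dimension $m\alpha$ is isomorphic to $V^{\oplus m}$. Hence $m\alpha$ decomposes generically as $m$ copies of $\alpha$ and fails to be Schur for $m \geq 2$.

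The hard case is $ext_Q(\alpha,\alpha) > 0$, where I need to show that $m\alpha$ remains a Schur root. My plan follows Schofield's original approach via perpendicular categories \cite{[S1]}: the right perpendicular subcategory $V^\perp \subset \mmod \k Q$ of a general Schur representation $V$ of dimension $\alpha$ is equivalent to $\mmod \k Q'$ for a hereditary algebra $\k Q'$ on strictly fewer vertices, which allows an induction on the number of vertices of $Q$. The main obstacle is producing, at each inductive step, an indecomposable Schur representation of dimension $m\alpha$, since the Schur locus is open and its nonemptiness forces the general representation of dimension $m\alpha$ to be Schur. The perpendicular category reduction accomplishes this by transferring the problem to a smaller quiver where the inductive hypothesis applies to the image of $m\alpha$. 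A more direct constructive route would build Schur representations of dimension $m\alpha$ iteratively from non-split self-extensions $0 \to V \to W \to V \to 0$, but verifying that the middle term $W$ can be chosen indecomposable with trivial endomorphism ring ultimately requires essentially the same perpendicular-category machinery.

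For part (2), write $\alpha = \sum_i \beta_i$ with the $\beta_i$ Schur and $ext_Q(\beta_i,\beta_j) = 0$ for $i \neq j$. Part (1) identifies each $(m\beta_i)$ as either a single Schur root or a sum of $m$ copies of $\beta_i$, exhibiting $m\alpha = \sum_i (m\beta_i)$ as a decomposition into Schur roots. To conclude this is the generic decomposition, I would verify the pairwise non-extension conditions: for distinct clusters, specializing to direct sums and invoking semicontinuity yields $ext_Q(m\beta_i, m\beta_j) \leq m^2 \cdot ext_Q(\beta_i,\beta_j) = 0$; within a cluster $(m\beta_i)$ that expands into $m$ copies of $\beta_i$, the failure of $m\beta_i$ to be Schur forces $ext_Q(\beta_i,\beta_i) = 0$ by the dichotomy of part (1), so the $m$ copies do not extend one another. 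Uniqueness in Theorem~\ref{classical generic decomp} then identifies this as the generic decomposition of $m\alpha$.
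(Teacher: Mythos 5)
The paper cites Schofield~\cite{[S1]} for this theorem and gives no proof of its own, so I am evaluating your proposal on its merits. Your dichotomy based on $ext_Q(\a,\a)$ is the right one, but your argument for the case $ext_Q(\a,\a)=0$ is incorrect: it conflates the off-diagonal quantity $ext_Q(\a,\a)=\min\{\dim_\k\Ext^1_Q(A,B):\ul\dim A=\ul\dim B=\a\}$, which is attained on a general pair $(A,B)$ with $A$ and $B$ varying \emph{independently}, with the diagonal quantity $\dim_\k\Ext^1_Q(V,V)$ for a single general $V$. These agree when $\a$ is a real Schur root, but they disagree for isotropic Schur roots. Take $Q$ the Kronecker quiver and $\a=(1,1)$: then $ext_Q(\a,\a)=0$, since a general pair $V_\lambda$, $V_\mu$ with $\lambda\neq\mu$ has $\Hom_Q=\Ext^1_Q=0$, yet $\Ext^1_Q(V_\lambda,V_\lambda)\cong\k\neq0$ for every brick $V_\lambda$. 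So for $m\geq 2$ your claim $\Ext^1_Q(V^{\oplus m},V^{\oplus m})=0$ fails, Voigt's lemma does not apply, the $G(m\a)$-orbit of $V^{\oplus m}$ is not open, and the general representation of dimension $m\a$ is in fact $V_{\lambda_1}\oplus\cdots\oplus V_{\lambda_m}$ with pairwise distinct $\lambda_i$ (Kronecker's classification of pencils), not $V^{\oplus m}$.

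The fix is simpler than the flawed argument: if $\a$ is Schur and $ext_Q(\a,\a)=0$, then $m\a=\a+\cdots+\a$ ($m$ times) is already a sum of Schur roots with vanishing pairwise $ext_Q$, so by the uniqueness clause of Theorem~\ref{classical generic decomp} this \emph{is} the generic decomposition of $m\a$ -- no appeal to Voigt's lemma or to orbit openness is needed, and this handles the real and isotropic subcases uniformly. With that replacement, your deduction of part (2) from part (1) goes through. The remaining claim -- that $ext_Q(\a,\a)>0$ forces $m\a$ to be a Schur root for all $m$ -- is the genuinely hard content of Schofield's result, and your sketch via perpendicular categories is an honest pointer to~\cite{[S1]} rather than an independent proof, which is all the paper itself does here.
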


\begin{prop}
If $\{\b_{1},\!\cdots\!,\b_{k}, p(v_{k+1})[1],\!\cdots\!,p(v_m)[1]\}$ is a partial virtual semi-tilting set then the corresponding subset 
$\{\b_{1},\!\cdots\!,\b_{k}, -p(v_{k+1}),\!\cdots\!,-p(v_m)\} \subset \ZZ^n$ is linearly independent over $\QQ$. In particular, it has at most $n$ elements.
\end{prop}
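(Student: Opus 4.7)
The plan is to apply the uniqueness of the classical generic decomposition theorem (Theorem~\ref{classical generic decomp}). Starting from a hypothetical rational relation $\sum_{i=1}^k c_i\b_i + \sum_{j=k+1}^m d_j(-p(v_j))=0$, clear denominators so that all $c_i,d_j\in\ZZ$. Moving the terms with negative coefficients to the opposite side yields an equality of non-negative integer combinations of Schur roots
\[
	\sum_{c_i>0} c_i\b_i + \sum_{d_j<0}|d_j|\,p(v_j) \;=\; \sum_{c_i<0} |c_i|\,\b_i + \sum_{d_j>0}d_j\,p(v_j),
\]
holding in $\NN^n$ (recall $p(v_j)=(E^t)^{-1}e_{v_j}\in\NN^n$ by Useful fact~\ref{fact}(5)). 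Call this common vector $\w$.

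Each side is a candidate generic decomposition of $\w$ in the sense of Theorem~\ref{classical generic decomp}. The $\b_i$ are Schur roots by hypothesis, and each $p(v_j)=\ul\dim P(v_j)$ is a real Schur root since $\End P(v_j)=\k$. The pairwise $ext_Q$ vanishes between any two distinct terms: between two $\b_i$'s by the partial virtual semi-tilting hypothesis, and whenever a $p(v_j)$ is involved because $\k Q$ is hereditary and $\Ext^1_Q$ to or from a projective vanishes. For multiplicities $c_i\b_i$ with $c_i\geq 2$, Schofield's Theorem~\ref{decomposition of ma} contributes either $c_i\b_i$ itself as a Schur root (one term) or $c_i$ copies of $\b_i$ (permissible precisely when $ext_Q(\b_i,\b_i)=0$).

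By uniqueness of the generic decomposition, the two multisets of Schur-root terms must coincide. The crucial last step is to show that no term on the left can match any term on the right, so both multisets are empty. The projective contributions on the two sides involve disjoint index sets $\{j:d_j<0\}$ and $\{j:d_j>0\}$, giving distinct $p(v_j)$'s. A $\b_i$ (or its multiple $c_i\b_i$) cannot equal a $p(v_{j'})$, because the semi-tilting condition gives $hom_Q(p(v_{j'}),\b_i)=(\b_i)_{v_{j'}}=0$, whereas $(p(v_{j'}))_{v_{j'}}=1$. Finally, for two distinct Schur roots $\b_i,\b_{i'}$ in the semi-tilting set, neither is a positive rational multiple of the other: by Kac's classification, no nontrivial multiple of a real or isotropic imaginary Schur root is Schur, whereas in the non-isotropic imaginary case a relation $\b_{i'}=r\b_i$ with $r>0$, $r\neq 1$, would give $\<\b_i,\b_{i'}\>=r\<\b_i,\b_i\><0$ and hence $ext_Q(\b_i,\b_{i'})>0$, contradicting the semi-tilting hypothesis.

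The hardest part will be this final term-by-term exclusion, especially in the non-isotropic imaginary case where a Schur root can admit Schur multiples; it rests on combining the vanishing $ext_Q(\b_i,\b_{i'})=0$ from the definition of partial virtual semi-tilting set with the sign of the self-pairing $\<\b,\b\>$ of imaginary Schur roots. Once the exclusion is in hand, the equal multisets must be empty, forcing all $c_i=0$ and all $d_j=0$, whence linear independence and the bound $m\leq n$.
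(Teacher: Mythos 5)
Your decision to push everything into $\NN^n$ and invoke the \emph{classical} Generic Decomposition Theorem~\ref{classical generic decomp} is where the argument breaks. For the two sides of your equation $\w = \sum_{c_i>0} c_i\b_i + \sum_{d_j<0}|d_j|\,p(v_j) = \cdots$ to be generic decompositions, you need $ext_Q$ to vanish in both directions between every pair of distinct terms. You justify this by claiming that ``$\Ext^1_Q$ to or from a projective vanishes,'' but only $\Ext^1_Q(P,-)=0$ is true; $\Ext^1_Q(-,P)$ need not vanish for a hereditary algebra. Moreover, the semi-tilting condition $hom_Q(p(v_j),\b_i)=(\b_i)_{v_j}=0$ does \emph{not} control $ext_Q(\b_i,p(v_j))$. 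A concrete counterexample: take $Q\colon 1\to 2$, $\b_i=(1,0)=\ul\dim S(1)$, $v_j=2$, $p(v_j)=(0,1)=\ul\dim P(2)$. Then $(\b_i)_2=0$ so $\{\b_i,\,p(2)[1]\}$ is a legal partial virtual semi-tilting set, yet $\Ext^1_Q(S(1),P(2))\neq 0$, so $ext_Q(\b_i,p(v_j))\neq 0$. Thus the left (or right) sum is in general \emph{not} a generic decomposition of $\w$, and the classical uniqueness theorem cannot be applied to it. This is exactly why the paper works with the vector $\a\in\ZZ^n$ keeping the projectives shifted (on the negative side), expresses both sides in the form $\sum n_i\b_i-(E^t)^{-1}\g$, and appeals to the \emph{Virtual} Generic Decomposition Theorem~\ref{thm:generic decomposition}; there the relevant compatibility between a $\b_i$ and a shifted projective $p(v_j)[1]$ is ``disjoint support,'' which is precisely what $(\b_i)_{v_j}=0$ delivers, with the other $ext_{D^b}$ being zero automatically by the shift.

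Your final ``term-by-term exclusion'' step is a sensible and correct worry (distinct Schur roots in a semi-tilting set cannot be positive multiples of one another, by the sign of $\<\b,\b\>$ and Kac), and it fills in a detail that the paper's one-line ``two different generic decompositions'' glosses over. But the preceding step must first be repaired by switching to the virtual theorem; as written, the multisets you compare are not a priori the unique generic decompositions of anything.
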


\begin{proof}
Any rational linear relation on the vectors $p(v_i),\b_j$ gives an integral linear relation by multiplying by the common denominators of the rational coefficients. Collecting terms with positive and negative coefficients we get an equation of the form
\[
	\a=\sum (n_i \b_i)-(E^t)^{-1}\g=\sum (m_j \b_j)-(E^t)^{-1}\g'
\]
where $n_i,m_j\geq0$ and $\g,\g'$ has support disjoint from any of the $\b_i,\b_j$. This gives two different generic decompositions of the same dimension vector contradicting Theorem~\ref{thm:generic decomposition}. To see that these are generic decompositions of $\a$ we use theorem \ref{decomposition of ma} and the observation that $ext_Q(m_i\b_i,m_j\b_j)\le m_im_jext_Q(\b_i,\b_j)=0$.
\end{proof}

\begin{cor} If $Q$ has $n$ vertices then
the simplicial complex $\cT(Q)$ is $(n-1)$ dimensional.
\end{cor}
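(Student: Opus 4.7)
The plan is to deduce this corollary directly from the preceding proposition together with an explicit construction witnessing the sharpness of the bound. By definition, the dimension of a simplicial complex $K$ is the supremum of $p$ such that $K_p\neq\emptyset$, i.e.\ one less than the maximal cardinality of a simplex. The proposition just proved shows that every simplex of $\cT(Q)$ has at most $n$ elements, so $\dim\cT(Q)\le n-1$. It remains to exhibit one $(n-1)$-simplex.

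For the lower bound I would take the collection of shifted indecomposable projective roots
\[
\{p(v_1)[1],\ p(v_2)[1],\ \ldots,\ p(v_n)[1]\},
\]
where $v_1,\ldots,v_n$ enumerate $Q_0$. Inspecting Definition \ref{VirSTilt}, the only required constraints are $ext_Q(\b_i,\b_j)=0$ among the Schur components and $hom_Q(p(v_i),\b_j)=(\b_j)_{v_i}=0$ between projective and Schur components; there is no imposed condition between two shifted projective roots. Hence any set consisting entirely of distinct $p(v)[1]$ is automatically a partial virtual semi-tilting set, and this one has cardinality $n$.

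Alternatively, and perhaps more transparently, one can use the set of projective Schur roots $\{\ul\dim P(v)\}_{v\in Q_0}$: each $\ul\dim P(v)$ is a Schur root (since $\End_Q(P(v))\cong e_v\k Qe_v\cong \k$, as $Q$ has no oriented cycles), and $\Ext^1_Q(P(v),P(w))=0$ for all $v,w$ because $P(v)$ is projective. So this is a virtual semi-tilting set of size $n$ as well. Either choice produces an $(n-1)$-simplex of $\cT(Q)$, proving $\dim\cT(Q)\ge n-1$.

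There is essentially no obstacle—the preceding proposition does all the heavy lifting—and the only thing to be careful about is the off-by-one between ``$n$ elements in a virtual semi-tilting set'' and ``$(n-1)$-dimensional simplex,'' which is tracked correctly by the convention that a $p$-simplex has $p+1$ vertices.
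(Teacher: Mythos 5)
Your proof is correct and essentially matches what the paper leaves implicit. The upper bound $\dim\cT(Q)\le n-1$ is exactly the statement of the preceding proposition (linear independence forces at most $n$ elements), and the lower bound is witnessed by the examples the paper itself lists immediately after Definition~\ref{VirSTilt}: the set of indecomposable projective roots and the set of shifted projective roots are both $n$-element virtual semi-tilting sets. Your verification that each $\ul\dim P(v)$ is a Schur root (via $\End_Q(P(v))\cong e_v\k Qe_v\cong\k$, using that $Q$ has no oriented cycles) and that $ext_Q$ vanishes between projectives is the right justification, and your observation that no compatibility condition is imposed between two shifted projectives in Definition~\ref{VirSTilt} — equivalently, $ext_{D^b}(p(v)[1],p(w)[1])=ext_Q(p(v),p(w))=0$ automatically — handles the other example. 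The off-by-one bookkeeping (a set of $n$ vertices gives an $(n-1)$-simplex) is also tracked correctly.
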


Let $K$ be a simplicial complex.  Let $K_0$ be the vertex set, that is, the set of $0$-simplices. The \emph{geometric realization} $|K|$ of $K$ is defined to be the subspace of the infinite dimensional vector space $\RR^{K_0}$ consisting of all vectors $x=\sum t_i v_i$, $t_i\in [0,1]$, $v_i\in K_0$ with the property that $\sum t_i=1$ and set of all vertices $v_i$ with nonzero coefficient is a simplex $\delta\in K$. For a fixed $\delta\in K$ the set of all such $x$, i.e., all $x\in|K|$ which are positive linear combinations of the vertices of $\delta$, is called the \emph{open simplex} $e^\delta$ of $\delta$. Note that $|K|$ is by definition a disjoint union of open simplices. The closure of an open simplex is the \emph{closed simplex} $\Delta^\delta$ of $\delta$ which is the set of all $x\in|K|$ which are nonnegative linear combinations of the vertices of $\delta$.

If $K_0$ is finite we take the usual Euclidean topology on $\RR^{K_0}$. When it is infinite we take the \emph{weak topology} which is the direct limit of all $\RR^S$ where $S$ runs over all finite subsets of $K_0$. This is the weakest topology (having the fewest open sets) on $|K|$ with the property that a mapping $\ll:|K|\to X$ is continuous if and only if it is continuous on every closed simplex $\Delta^\delta$.

Let $\LL:\mathcal T_0(Q)\to \RR^n$ be the mapping which sends each Schur root $\b$ to itself and $p[1]$ to $-p$. Then $\LL$ extends to a continuous mapping $|\LL|:|\cT(Q)|\to \RR^n$ given by
\[
	|\LL|\left(\sum  t_j\b_j +\sum t_ip(v_i)[1]\right)= \sum t_j\b_j -\sum t_i p(v_i).
\]
The proposition above implies that $0$ is not in the image of this mapping. Therefore, we can normalize to get a continuous mapping  $\ll:|\cT(Q)|\to S^{n-1}$
\[
	\ll\left(\sum  t_j\b_j +\sum t_ip(v_i)[1]\right):=\frac{\sum t_j\b_j -\sum t_i p(v_i)}{
	\length{\sum t_j\b_j -\sum t_i p(v_i)}
	}
\]
We would like this mapping to be a monomorphism. However, if there are Schur roots $k\b,m\b$ which are multiples of the same $\b\in\NN^n$ then clearly $\ll(k\b)=\ll(m\b)$. So, $\ll$ may not be a monomorphism. To remedy this we restrict to a subcomplex $\cT'(Q)$ of $\cT(Q)$ which we now define.

We say that a Schur root $\b$ is {\em minimal} if its coefficients are relatively prime, i.e., $\b$ is not a positive integer multiple of another vector in $\NN^n$. By Schofield's theorem mentioned above, every Schur root is a multiple of a minimal Schur root since any decomposition of $\b$ would result in a decomposition of $m\b$. 

The following corollary is a consequence of the virtual generic decomposition theorem \ref{thm:generic decomposition}.

\begin{cor}\label{uniquedim}
Every nonzero vector $x\in\QQ^n$ can be written uniquely as a linear combination
\[
	x= \sum x_j \b_j-\sum x_i p(v_i)
\]
where $x_i,x_j>0$ are positive rational numbers, $\b_j$ are minimal Schur roots and $p(v_i)[1]$ are shifted indecomposable projective roots forming a partial virtual semi-tilting set.
\end{cor}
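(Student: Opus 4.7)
The plan is to apply the Virtual Generic Decomposition Theorem \ref{thm:generic decomposition} to a suitable integer multiple of $x$ and regroup the Schur roots appearing there according to their minimal Schur root ``rays.'' For uniqueness, I will use the linear independence of partial virtual semi-tilting sets established in the preceding proposition together with Schofield's Theorem \ref{decomposition of ma}.

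For existence, choose a positive integer $m$ with $mx\in\ZZ^n$ and invoke Theorem \ref{thm:generic decomposition} to obtain a decomposition
\[
mx = \b_1+\cdots+\b_k-(E^t)^{-1}\g
\]
satisfying conditions (1)--(4) of that theorem. By the comment immediately following the definition of minimal Schur root, each Schur root $\b_i$ is a positive integer multiple $n_i\tilde\b_i$ of a unique minimal Schur root. Group together all indices $i$ sharing the same minimal Schur root $\tilde\b^{(l)}$ and set $c_l$ to be the sum of the corresponding $n_i$; then
\[
mx=\sum_l c_l\tilde\b^{(l)}-(E^t)^{-1}\g.
\]
Dividing by $m$ and expanding $(E^t)^{-1}\g=\sum_v \g_v p(v)$ over $v\in\mathrm{supp}(\g)$ produces the required expression with positive rational coefficients $x_l=c_l/m$ and $x_v=\g_v/m$. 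To verify that $\{\tilde\b^{(l)}\}\cup\{p(v)[1]:v\in\mathrm{supp}(\g)\}$ is a partial virtual semi-tilting set, distinctness of the $\tilde\b^{(l)}$'s is automatic from the construction, and $(\tilde\b^{(l)})_v=0$ for $v\in\mathrm{supp}(\g)$ follows from the disjoint-support condition (2) of the theorem applied to any $\b_i=n_i\tilde\b^{(l)}$ (together with $n_i>0$). The nontrivial condition $ext_Q(\tilde\b^{(l)},\tilde\b^{(k)})=0$ for $l\ne k$ must be deduced from the vanishing of $ext_Q$ on the Schur root multiples $n_i\tilde\b^{(l)}$ and $n_j\tilde\b^{(k)}$ sitting over different rays, by comparing the generic decomposition of $\tilde\b^{(l)}+\tilde\b^{(k)}$ with that of a suitable positive integer multiple via Schofield's Theorem \ref{decomposition of ma} and the uniqueness of the generic decomposition.

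For uniqueness, suppose $x$ is written in two ways of the required form. Clearing denominators by a common multiple $N$ gives two integer-coefficient expressions for $Nx$; splitting each coefficient-times-minimal-Schur-root block $c\tilde\b$ appearing in them into Schur roots according to Theorem \ref{decomposition of ma} yields two decompositions of $Nx$ satisfying conditions (1)--(4) of Theorem \ref{thm:generic decomposition}. By the uniqueness asserted there, the two splittings coincide, which forces the original two decompositions of $x$ to agree after regrouping by rays. The principal obstacle lies in the existence step: the descent from vanishing of $ext_Q$ on nontrivial Schur root multiples down to vanishing of $ext_Q$ on the primitive minimal Schur roots themselves is not immediate, and it is this step that needs the careful application of Schofield's multiplicative principle recorded in Theorem \ref{decomposition of ma}.
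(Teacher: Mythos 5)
Your proof matches the paper's: clear denominators to land in $\ZZ^n$, apply Theorem~\ref{thm:generic decomposition}, regroup the Schur roots by their minimal Schur root rays, divide back down, and for uniqueness rescale two hypothetical decompositions to a common integer multiple and invoke the uniqueness of the virtual generic decomposition via Theorem~\ref{decomposition of ma}. You are right to flag the descent of $ext_Q$-vanishing from the Schur root multiples $n_i\tilde\b^{(l)}$, $n_j\tilde\b^{(k)}$ down to the primitive roots $\tilde\b^{(l)},\tilde\b^{(k)}$ --- the paper leaves this implicit --- but it does go through exactly as you indicate, by computing the generic decomposition of $a(\tilde\b^{(l)}+\tilde\b^{(k)})$ two ways via Theorem~\ref{decomposition of ma} and using uniqueness to force $\tilde\b^{(l)}+\tilde\b^{(k)}$ to be its own generic decomposition.
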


\begin{proof}
Multiply by a sufficiently large integer $k$ to get $kx\in \ZZ^n$. Then apply Theorem~\ref{thm:generic decomposition} to $kx$ to get a generic decomposition
\[
	kx=\sum (m_j\b_j)-\sum m_i p(v_i)
\]
where we have collected repeating factors $\b_j$ using Schofield's notation.
Next, write each summand $(m_j\b_j)$ as a multiple of a minimal Schur root. Then divide by $k$ to get the desired rational linear decomposition of $x$.

To prove uniqueness, suppose we have two rational decompositions of $x$. Then we get integer decompositions of say, $kx, mx$ giving two generic decompositions of $mkx$ (using  \ref{decomposition of ma}) which is a contradiction.
\end{proof}

\begin{thm}\label{dense mono} The restriction of $\ll:|\cT(Q)|\to S^{n-1}$ to the subcomplex $\cT'(Q)$ of $\cT(Q)$ consisting of all simplices whose vertices are either minimal Schur roots or shifted indecomposable projective roots gives a continuous mapping:
\[
	\ll':|\cT'(Q)|\to S^{n-1}
\]
which is a monomorphism whose image is dense in the standard Euclidean topology on $S^{n-1}$.
\end{thm}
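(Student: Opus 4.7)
The plan is to establish continuity, injectivity, and density of the image separately. Continuity of $\ll'$ is immediate: the map $|\LL|:|\cT'(Q)|\to\RR^n$ is the simplex-wise linear extension of $\LL$, the preceding proposition guarantees $0\notin|\LL|(|\cT'(Q)|)$, and $\ll'$ is the composition with the continuous normalization $v\mapsto v/\length v$. Density of the image will follow directly from Corollary \ref{uniquedim}: every nonzero $v\in\QQ^n$ admits a decomposition $v=\sum x_j\b_j-\sum x_ip(v_i)$ of the prescribed form, so $v\in|\LL|(|\cT'(Q)|)$; normalizing, every rational direction lies in $\ll'(|\cT'(Q)|)$, and rational directions are dense in $S^{n-1}$.

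For injectivity, suppose $\ll'(x)=\ll'(y)$ with $x$ in the open simplex of $\delta\in\cT'(Q)$ and $y$ in the open simplex of $\delta'\in\cT'(Q)$. Write $\delta=\{\b_1,\ldots,\b_k\}\cup\{p(v_{k+1})[1],\ldots,p(v_m)[1]\}$ and similarly for $\delta'$; from $|\LL|(x)=c|\LL|(y)$ for some $c>0$ a single vector $v\in\RR^n$ has two decompositions
\[
v=\sum t_j\b_j-\sum t_ip(v_i)=\sum(cs_j)\b'_j-\sum(cs_i)p(v'_i)
\]
with strictly positive real coefficients. The proposition just before this theorem shows that $\{\b_j\}\cup\{-p(v_i)\}$ is $\QQ$-linearly independent, hence $\RR$-linearly independent because these vectors are integral. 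Therefore $v$ lies in the relatively open simplicial cone $C^o_\delta\subset V_\delta:=\mathrm{span}_\RR(\delta)$ parameterized by the open positive orthant via $(t_j,t_i)\mapsto\sum t_j\b_j-\sum t_ip(v_i)$, and similarly $v\in C^o_{\delta'}\subset V_{\delta'}$.

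The key step will be to show $\delta=\delta'$ by rational approximation. The subspaces $V_\delta,V_{\delta'}$ are rational, so $\QQ^n\cap V_\delta\cap V_{\delta'}$ is dense in $V_\delta\cap V_{\delta'}$. Since $C^o_\delta\cap C^o_{\delta'}$ is open in the latter intersection and contains $v$, I can pick a rational vector $w\in C^o_\delta\cap C^o_{\delta'}$; by openness of the positive orthants, the two induced decompositions of $w$ (with supports $\delta$ and $\delta'$) still have strictly positive rational coefficients. Corollary \ref{uniquedim} then forces these two decompositions of $w$ to coincide, so $\delta=\delta'$ and, by linear independence of the vectors of $\delta$, the coefficients match—that is, $t_j=cs_j$ and $t_i=cs_i$ for every index. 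The normalization $\sum t_j+\sum t_i=1=\sum s_j+\sum s_i$ then forces $c=1$, yielding $x=y$. The main obstacle is precisely this upgrade from the $\QQ$-uniqueness of Corollary \ref{uniquedim} to genuine $\RR$-level injectivity, and it is here that the minimality hypothesis on Schur roots is essential: without it, distinct multiples $k\b,m\b$ of a non-minimal Schur root would yield distinct simplices mapping to the same direction, and injectivity would fail already at the normalization step.
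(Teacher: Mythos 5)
Your proof is correct and follows the same outline as the paper, with one notable improvement: the paper's proof of injectivity simply invokes the uniqueness clause of Corollary~\ref{uniquedim} and stops, but that corollary only covers \emph{rational} vectors, whereas $\ll'$ must be shown injective on points of $|\cT'(Q)|$ with arbitrary real barycentric coordinates. You identify this gap explicitly and bridge it with the rational approximation argument: since $V_\delta\cap V_{\delta'}$ is a rational subspace, the open set $C^o_\delta\cap C^o_{\delta'}$ (which contains the real vector $v$) contains a rational point $w$, whose two positive decompositions are forced rational by linear independence of the integral vertices; Corollary~\ref{uniquedim} applied to $w$ then yields $\delta=\delta'$, and linear independence plus the barycentric normalization pin down the coefficients and the scalar $c$. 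This is the right way to upgrade $\QQ$-uniqueness to $\RR$-injectivity, and your remark that minimality of the Schur roots is exactly what makes the vertex set honest (ruling out $k\b$ and $m\b$ as distinct vertices) is on point. The continuity and density portions match the paper verbatim. In short, your argument is the paper's argument done carefully; the rational-density step you supply is a genuine clarification that the paper elides.
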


\begin{proof}
The uniqueness statement of \ref{uniquedim} implies that $\ll'$ is a monomorphism. The existence part of \ref{uniquedim} implies that the image of $\ll'$ contains the image of $\QQ^n-\{0\}$ under the normalization map $\cdot/||\cdot||:\QQ^n-\{0\}\to S^{n-1}$ whose image is dense.
\end{proof}


\section{Semi-invariants and the cluster tilting triangulation associated to a Dynkin quiver}\label{finitecase}\label{sec8}

Until now, $Q$ was an arbitrary quiver without oriented cycles.  We now assume $Q$ is a Dynkin quiver, i.e. a simply laced Dynkin diagram with any orientation of its edges. We define the {\em cluster tilting triangulation} associated to a Dynkin diagram which triangulates the sphere via the complex of cluster tilting sets, and show that the supports of the semi-invariants of the quiver comprise the codimension-one skeleton.

\subsection{ Cluster tilting triangulation}

Since $Q$ is Dynkin, the set of Schur roots equals the set of positive roots $\Phi_+$ of the Euler form. Also we recall that if $\b$ is a positive root then there is a unique indecomposable module of dimension $\b$ up to isomorphism and the set of all elements of $R(\b)$ isomorphic to this module is open. The Schur roots are all minimal. They and the shifted projectives form a finite set we denote by $\Phi_+'$.

\begin{defn}
The {\em cluster tilting complex} of the Dynkin diagram $Q$ is defined to be the simplicial complex $\cT(Q)=\cT'(Q)$ with vertex set $\Phi_+'$ so that the faces of $\cT(Q)$ are the virtual semi-tilting sets.
\end{defn}


\begin{thm}
For a Dynkin quiver, the geometric realization of the cluster tilting complex is homeomorphic to the $(n-1)$-sphere. Furthermore, a homeomorphism $\ll:|\cT(Q)|\to S^{n-1}$ is given by
\[
	\ll\left(\sum  t_j\b_j+\sum t_ip(v_i)[1] \right):=\frac{\sum t_j\b_j -\sum t_i p(v_i)}{
	\length{\sum t_j\b_j -\sum t_i p(v_i)}
	}
\]
\end{thm}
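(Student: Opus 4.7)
The plan is to deduce the theorem directly from Theorem~\ref{dense mono} together with the finiteness that is special to the Dynkin case. Recall from the introductory remarks of Section~\ref{finitecase} that for a Dynkin quiver the Schur roots coincide with the finite set of positive roots $\Phi_+$ of the Euler form, all of these are minimal, and there are exactly $n$ shifted indecomposable projectives. Consequently $\Phi'_+$ is finite, $\cT(Q)=\cT'(Q)$, and $|\cT(Q)|$ is a finite simplicial complex, hence a compact Hausdorff space in its weak (= Euclidean) topology.

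Next I would apply Theorem~\ref{dense mono} to the subcomplex $\cT'(Q)$: the map $\ll=\ll'$ defined by
\[
\ll\left(\sum t_j\b_j+\sum t_i p(v_i)[1]\right)=\frac{\sum t_j\b_j-\sum t_ip(v_i)}{\bigl\|\sum t_j\b_j-\sum t_ip(v_i)\bigr\|}
\]
is a continuous injection $|\cT(Q)|\to S^{n-1}$ whose image is dense in $S^{n-1}$ (with respect to the standard Euclidean topology). The formula in the statement of the theorem is exactly this map, so the only things left to verify are surjectivity and that $\ll$ is a homeomorphism onto its image.

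Both are now easy. Since $|\cT(Q)|$ is compact and $S^{n-1}$ is Hausdorff, the image $\ll(|\cT(Q)|)$ is compact, hence closed in $S^{n-1}$. A closed dense subset of $S^{n-1}$ equals $S^{n-1}$, so $\ll$ is surjective. Finally, a continuous bijection from a compact space to a Hausdorff space is automatically a homeomorphism, so $\ll\colon|\cT(Q)|\to S^{n-1}$ is a homeomorphism, proving the theorem.

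The only potential obstacle is verifying that the Dynkin hypothesis really forces $\cT(Q)$ to be finite and to coincide with $\cT'(Q)$; but both facts are recorded in the opening sentences of this section (every positive root of a Dynkin form has relatively prime coordinates and there are finitely many of them), so no further work is required. The entire argument is a routine compactness-plus-density closure once Theorem~\ref{dense mono} is in hand.
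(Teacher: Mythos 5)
Your argument is correct and coincides with the paper's own proof: both invoke Theorem~\ref{dense mono} (the paper refers to it as a corollary), note $\cT(Q)=\cT'(Q)$ and that $\cT(Q)$ is finite hence $|\cT(Q)|$ compact, and then conclude by the standard compactness-plus-density argument that $\ll$ is a homeomorphism onto all of $S^{n-1}$. You have merely spelled out the final topological steps (closed dense subset, continuous bijection from compact to Hausdorff) that the paper states more tersely.
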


\begin{proof} Since all Schur roots are minimal, $\cT'(Q)=\cT(Q)$. Therefore, Corollary \ref{dense mono} applies to $\ll=\ll'$ to show that $\ll:|\cT(Q)|\to S^{n-1}$ is a continuous monomorphism with dense image. However, $\cT(Q)$ is a finite complex. So, $|\cT(Q)|$ is compact. This means that $\ll$ is a homeomorphism onto its image which must also be compact and therefore all of $S^{n-1}$.
\end{proof}

In any triangulation of a closed manifold, a codimension one simplex is a face of exactly two simplices of maximal dimension. This gives the following corollary, which is a special case of a theorem from \cite{[BMRRT]}.

\begin{cor}
In the Dynkin case, any almost complete generalized cluster tilting set of roots is contained in exactly two complete virtual tilting sets.
\end{cor}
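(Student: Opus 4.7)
The plan is to deduce the corollary directly from the homeomorphism $\ll:|\cT(Q)|\to S^{n-1}$ established in the preceding theorem, together with the standard topological fact about codimension-one simplices in a triangulated closed manifold.

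First, I would translate the combinatorial terminology into simplex language. A complete virtual semi-tilting set of roots has exactly $n$ elements, so it corresponds to an $(n-1)$-simplex of $\cT(Q)$, i.e., a top-dimensional simplex. An \emph{almost complete} virtual semi-tilting set should by definition have $n-1$ elements and so corresponds to an $(n-2)$-simplex $\tau\in\cT(Q)_{n-2}$, that is, a codimension-one face. The corollary then asks: how many $(n-1)$-simplices of $\cT(Q)$ have $\tau$ as a face?

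Next, I would invoke the preceding theorem, which says that $\ll:|\cT(Q)|\to S^{n-1}$ is a homeomorphism and, by construction, maps each closed simplex of $\cT(Q)$ piecewise linearly to a geodesic simplex of the sphere. In particular, $\cT(Q)$ gives a genuine simplicial triangulation of the closed $(n-1)$-manifold $S^{n-1}$. Now one applies the standard topological fact (a consequence of local homology computations, or of the classification of triangulated manifolds): in any simplicial triangulation of a closed $n-1$ manifold, every $(n-2)$-simplex is a face of exactly two $(n-1)$-simplices. Since $S^{n-1}$ is a closed manifold, this applies to our codimension-one simplex $\tau$.

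Translating back, the two top-dimensional simplices containing $\tau$ correspond to precisely two complete virtual semi-tilting sets, each obtained from the almost complete set by adjoining one more Schur root or shifted projective root. The main (essentially only) obstacle is the verification that $\cT(Q)$ really gives a bona fide simplicial triangulation of the sphere rather than merely a cell decomposition — but this is guaranteed by the theorem, since the simplices of $\cT(Q)$ are finite subsets of the finite vertex set $\Phi_+'$ and $\ll$ sends distinct closed simplices homeomorphically onto distinct geodesic simplices of $S^{n-1}$ with disjoint interiors. Thus the corollary reduces to citing the homeomorphism theorem and the manifold fact.
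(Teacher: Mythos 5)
Your proposal is correct and follows exactly the argument the paper uses: translate the almost complete set to a codimension-one simplex of $\cT(Q)$, invoke the preceding homeomorphism theorem to view $\cT(Q)$ as a triangulation of $S^{n-1}$, and apply the standard fact that in a triangulation of a closed manifold each codimension-one simplex is a face of exactly two top-dimensional simplices. The paper states this in one sentence immediately before the corollary; your write-up merely spells out the same reasoning in more detail.
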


In the finite case the supports of the semi-invariants are easy to describe.

\begin{lem}\label{lem:perp}
Let $\a,\b$ be positive roots of the Dynkin quiver $Q$. Then $\a\in D(\b)$ if and only if $\<\a,\b\>=0$.
\end{lem}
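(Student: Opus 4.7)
The forward direction $(\Rightarrow)$ is immediate: by definition $D(\b)=H(\b)\cap\bigcap_{\b'\into\b}H_-(\b')\subseteq H(\b)$, so $\a\in D(\b)$ forces $\<\a,\b\>=0$.

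For the converse, assume $\<\a,\b\>=0$ with $\a,\b$ positive roots of the Dynkin quiver $Q$. By Gabriel's theorem, let $M$ and $N$ be the unique indecomposables with $\ul\dim M=\a$ and $\ul\dim N=\b$; since positive roots are Schur roots in the Dynkin case, these are also the generic representations of the respective dimensions. My plan is to apply the Saturation Theorem~\ref{satthm}, which states (for $\a\in\NN^n$) that $\a\in D(\b)$ iff $\a\in supp_{\NN}(E\b)$, reducing the problem to exhibiting a nonzero semi-invariant on $R(\a)$ of weight $\chi_{E\b}$. Since $\<\a,\ul\dim N\>=0$, the First Fundamental Theorem~\ref{thm: FFT} supplies the natural candidate $c_N$ of precisely this weight, and the task becomes showing $c_N\not\equiv 0$ on $R(\a)$.

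Unfolding the definition $c_N(M')=\det\Hom(p_{M'},N)$ (a square-matrix determinant because $\<\a,\ul\dim N\>=0$), nonvanishing at $M'$ is equivalent to $\Hom(M',N)=\Ext^1(M',N)=0$. Evaluating at the generic $M'=M$, the Euler identity $\dim\Hom(M,N)=\dim\Ext^1(M,N)$ forces both to vanish as soon as one does. Once $\Hom(M,N)=0$ is in hand, the lemma follows at once: for any $\b'\into\b$, the generic representation of dimension $\b$ being $N$ produces a subrepresentation $L\subseteq N$ with $\ul\dim L=\b'$, and the inclusion $L\hookrightarrow N$ induces an injection $\Hom(M,L)\hookrightarrow\Hom(M,N)=0$, whence $\<\a,\b'\>=-\dim\Ext^1(M,L)\leq 0$.

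The principal obstacle is therefore the Dynkin-specific claim that $\Hom(M,N)=0$ for indecomposables with $\<\ul\dim M,\ul\dim N\>=0$. This is a classical consequence of Auslander-Reiten theory: the AR-formula $\Ext^1(M,N)\cong D\Hom(N,\t M)$, together with the acyclicity of the AR-quiver of a Dynkin quiver, prevents simultaneous nonvanishing of $\Hom(M,N)$ and $\Hom(N,\t M)$, since concatenating paths $M\to N$ and $N\to \t M$ with the translation link $\t M\to M$ would produce a cycle in the AR-quiver (details in \cite{[ASS]}). If this last step proves awkward to cite cleanly, an alternative is to argue directly on the AR-slice containing $M$ and $N$ using reflection functors, which reduce the question to the trivially verified case of simples after a finite sequence of BGP reflections.
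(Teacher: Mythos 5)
Your proof is correct and the essential mechanism is the same as the paper's: establish that for indecomposables $M,N$ over a Dynkin quiver $\Hom_Q(M,N)$ and $\Ext^1_Q(M,N)$ cannot both be nonzero (so $\langle\a,\b\rangle=0$ forces both to vanish), and then pass to subrepresentations $L\subseteq N$ with $\ul\dim L=\b'$ via $\Hom(M,L)\hookrightarrow\Hom(M,N)=0$ to get $\langle\a,\b'\rangle=-\dim\Ext^1(M,L)\le 0$. The paper states the ``cannot both be nonzero'' fact without proof; you supply the standard justification via directedness of the Auslander--Reiten quiver, which is a welcome addition, though you should be a little careful about the edge case $M$ projective (where $\Ext^1(M,N)=0$ trivially and the $\tau$-argument is not needed). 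One structural remark: the opening detour through the Saturation Theorem~\ref{satthm} and the FFT~\ref{thm: FFT} is ultimately unused --- your last paragraph verifies $\a\in D(\b)$ directly from the definition, which is exactly what the paper does, so the semi-invariant $c_N$ and the reduction to its nonvanishing can be dropped entirely without loss.
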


\begin{proof}
If $\a\in D(\b)$ then $\<\a,\b\>=0$ by definition. Conversely,
if $\<\a,\b\>=0$ then $hom_Q(\a,\b)=ext_Q(\a,\b)=0$ since they cannot both be nonzero. This implies that $hom_Q(\a,\b')=0$ for any $\b'\into\b$. So, $\<\a,\b'\>\leq0$ and $\a\in D(\b)$. 
\end{proof}

\begin{thm}\label{thm:perp}
Let $\b$ be a positive root of $Q$. Then $D(\b)\subset\RR^n$ is the set of all nonnegative real linear combinations of positive roots $\a$ so that 
$\langle\a,\b\rangle=0$ and negative projective roots $-p(v_i)$ so that $\langle p(v_i),\b\rangle =\b_{v_i}=0$.
\end{thm}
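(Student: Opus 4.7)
The plan is to prove the claim by double inclusion with the polyhedral cone
\[
	C = \Bigl\{\sum t_j\a_j + \sum s_i(-p(v_i)) \,\Bigl|\, t_j,s_i\ge 0,\ \a_j\in\Phi_+,\ \langle\a_j,\b\rangle=0,\ \b_{v_i}=0\Bigr\},
\]
using the Dynkin hypothesis through the identification of Schur roots with positive roots and through Lemma \ref{lem:perp}. The main engines are the Virtual Saturation Theorem \ref{virsatthm}, the Virtual Generic Decomposition \ref{thm:generic decomposition} and the Virtual First Fundamental Theorem \ref{thm: vFFT}.

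The inclusion $C\subseteq D(\b)$ is immediate from earlier lemmas. Each positive root $\a$ with $\langle\a,\b\rangle=0$ lies in $D(\b)$ by Lemma \ref{lem:perp}, and each $-p(v_i)$ with $\b_{v_i}=0$ lies in $D(\b)$ by the equivalence $(1)\Leftrightarrow(3)$ of Lemma \ref{tfae} together with the computation $\langle p(v),\b\rangle=\b_v$. Since $D(\b)$ is a closed convex cone (Proposition \ref{convex}), it contains every nonnegative real combination of these generators, i.e.\ all of $C$.

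For the reverse inclusion, I would proceed by density. The set $D(\b)$ is cut out of $\RR^n$ by finitely many linear (in)equalities with integer coefficients, so $D(\b)\cap\QQ^n$ is dense in $D(\b)$. By scaling it suffices to show that every $\a\in\ZZ^n\cap D(\b)$ lies in $C$, after which we pass to the closure (noting that $C$, being the conical hull of the finitely many positive roots and shifted projective roots of the Dynkin quiver, is a closed polyhedral cone). Fix such $\a$. By the Virtual Saturation Theorem, $\a\in\mathrm{supp}_\ZZ(\b)$, so $SI^{vir}(Q,\a)_{\chi_\b}\neq 0$. The Virtual Generic Decomposition gives
\[
	\a=\b_1+\cdots+\b_k-(E^t)^{-1}\g=\sum_j\b_j-\sum_i m_i\,p(v_i)
\]
with $\b_j$ Schur roots, $m_i>0$, and $\{v_i\}$ disjoint from $\bigcup_j\mathrm{supp}(\b_j)$. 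The Virtual First Fundamental Theorem then says that the nonvanishing of the weight-$\chi_\b$ component forces $\b$ to have support disjoint from $\g$ (equivalently $\b_{v_i}=0$ for each $i$) and forces $\langle\b_j,\b\rangle=0$ for every $j$. In the Dynkin case every Schur root is a positive root, so each $\b_j$ and each $-p(v_i)$ is one of the allowed generators; since $m_i\ge 0$, $\a\in C$. Taking the closure of $D(\b)\cap\QQ^n\subseteq C$ completes the proof.

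The main obstacle, such as it is, is the bookkeeping linking the two formulations: I need the conditions extracted from Theorem \ref{thm: vFFT} (support of the weight disjoint from $\g$, and vanishing of $\langle\b_j,\b\rangle$) to match exactly the constraints defining the generators of $C$. Once this correspondence is in place, the Dynkin finiteness (guaranteeing $C$ is a finitely generated, hence closed, cone) together with rational density of $D(\b)$ closes the argument.
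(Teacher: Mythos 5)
Your proof is correct, but it takes a genuinely different route from the paper's. The paper also reduces to $\a\in\ZZ^n\cap D(\b)$ and invokes the Virtual Saturation Theorem, but it then goes through Proposition~\ref{homext}: from the nonvanishing of $\cc_V$ (with $V$ the unique indecomposable of dimension $\b$) it extracts a module $M$ and projective $P$ with $\a=\ul\dim M-\ul\dim P$, $\Hom_Q(M,V)=\Ext^1_Q(M,V)=0$, $\Hom_Q(P,V)=0$, observes these vanishings are inherited by every indecomposable summand of $M$ and $P$, and concludes that $\a$ is a nonnegative integer combination of positive roots and negative projective roots lying in $D(\b)$. You instead apply the Virtual Generic Decomposition~\ref{thm:generic decomposition} to $\a$ itself to get the canonical decomposition $\a=\sum\b_j-(E^t)^{-1}\g$, and then read off the orthogonality constraints $\<\b_j,\b\>=0$ and $\b_{v_i}=0$ from the graded structure of $SI^{vir}(Q,\a)$ in the Virtual FFT~\ref{thm: vFFT}. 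Both arguments land on the same decomposition of $\a$ (via the Stability Theorem the cokernel $M$ appearing in Proposition~\ref{homext} is general of dimension $\mu$, so $\ul\dim M_i$ matches the $\b_j$'s), but the logical mechanisms differ: the paper reasons concretely with representatives and their summands, while you exploit the uniqueness of the generic decomposition and the explicit description of admissible weights. Your route makes the role of the generic decomposition more visible; the paper's is somewhat more elementary in that it only needs Proposition~\ref{homext}, a lemma that predates the virtual theorems in the exposition. One small remark: Proposition~\ref{convex} literally states that $D(\b)$ is closed and convex inside the hyperplane $H(\b)$; the cone property you also rely on is immediate from the homogeneity of the defining (in)equalities, not from that proposition, so you might flag it separately.
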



\begin{proof}
Since $D(\b)$ is given by homogeneous linear equations and inequalities with integer coefficients, it suffices to prove the theorem in $\ZZ^n$ instead of $\RR^n$. So let $\a\in\ZZ^n\cap D(\b)$. By the Virtual Saturation Theorem~\ref{virsatthm}  and its corollary this set is the same as the support of $\cc_V$ where $V$ is the unique indecomposable representation with dimension $\b$. Proposition \ref{homext} implies that there is a module $M$ and a projective module $P$ over $\k Q$, such that $\a ={\ul\dim}M-{\ul\dim}P$, and so that $\Hom_Q(M,V)=\Ext^1_Q(M,V)=0$ and $\Hom_Q(P,V)=0$. But then the same holds for all indecomposable direct summands $M_i$ of $M$ and all indecomposable direct summands $-P_j$ of $-P$. 
Thus the corresponding roots $\a_i$ lie in $D(\b)$. So $\a=\sum\ul\dim M_i - \sum\ul\dim P_j$ is a positive linear combination of the required roots.
\end{proof}


\begin{cor}\label{cor:perp}
For any $x\in D(\b)$ there is a virtual semi-tilting set $\{\a_j\}$ all of whose elements lie in $D(\b)$ so that $x$ is a nonegative linear combination of the $\a_j$.
\end{cor}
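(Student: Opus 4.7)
The plan is to apply the Virtual Generic Decomposition Theorem to integer multiples of $x$ and propagate non-vanishing of the determinantal semi-invariant $\cc_V$ through the resulting direct sum decomposition. First reduce to rational $x$: in the Dynkin case $V$ (the unique indecomposable of dimension $\b$) has only finitely many subrepresentation classes, so $D(\b)$ is a rational polyhedral cone and $D(\b)\cap\QQ^n$ is dense in $D(\b)$. Given $x\in D(\b)\cap\QQ^n$, pick $k\in\NN$ with $kx\in\ZZ^n$; then $kx\in\ZZ^n\cap D(\b)=supp_{\ZZ}(\b)$ by Theorem \ref{virsatthm}, and Corollaries \ref{cor to VirSat} and \ref{cor516} together give $\cc_V\neq 0$ on the general element $f\in R^{can}(kx)$.

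Next apply Theorem \ref{thm:generic decomposition} to $kx$, writing $kx=\sum_i m_i\b_i-(E^t)^{-1}\g$; then $\{\b_i,\,p(v_j)[1]: v_j\in supp\,\g\}$ is a partial virtual semi-tilting set, and $f$ is homotopy equivalent to the direct sum of minimal resolutions of Schur modules $M_i$ (with $\ul\dim M_i=\b_i$) together with the shifted projectives $P(v_j)[1]$. Applying $\Hom_Q(-,V)$ exhibits $\Hom_Q(f,V)$ as a block-diagonal matrix; the total matrix is square because $\<kx,\b\>=0$, but a square block-diagonal matrix with any non-square diagonal block necessarily has vanishing determinant. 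Hence $\cc_V(f)\neq 0$ forces every individual block to be both square and invertible. Squareness of the $i$th Schur block yields $\<\b_i,\b\>=0$, and squareness of the shifted-projective blocks yields $\b_{v_j}=V_{v_j}=0$ for every $v_j\in supp\,\g$; invertibility of the $i$th block then yields $\Hom_Q(M_i,V)=\Ext_Q(M_i,V)=0$. Lemma \ref{lem:perp} places each $\b_i$ in $D(\b)$ and Lemma \ref{tfae} places each $-p(v_j)$ in $D(\b)$, so dividing by $k$ presents $x$ as a nonnegative rational combination of the elements of a partial virtual semi-tilting set lying in $D(\b)$.

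To pass from rational $x$ to arbitrary $x\in D(\b)$, use that in the Dynkin case only finitely many partial virtual semi-tilting sets exist. For each such set $S\subseteq D(\b)$, the positive cone $C(S)=\{\sum_{\a\in S}t_\a\a:\, t_\a\geq 0\}$ is closed, so the finite union $\bigcup_{S\subseteq D(\b)} C(S)$ is also closed; since it contains the dense subset $D(\b)\cap\QQ^n$ by the previous step, it equals $D(\b)$. The main obstacle I anticipate is the block-diagonality step: one must carefully use the global-squareness-forces-block-squareness observation to propagate non-vanishing of $\cc_V$ from the total presentation down to each individual summand of the generic decomposition, and this is where the precise geometry of $\cc_V$ on the canonical presentation space enters the argument.
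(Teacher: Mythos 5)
Your proof is correct and, despite its greater length, follows essentially the same route as the paper: reduce to $\QQ^n\cap D(\b)$ by density and closedness, scale to an integer vector $kx$, invoke the Virtual Saturation Theorem to identify $kx$ as lying in the support of $\cc_V$, and extract a partial virtual semi-tilting decomposition of $kx$ whose summand dimension vectors lie in $D(\b)$.

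Two useful things you make explicit that the paper leaves implicit. First, the paper's proof simply says ``repeat the last step of the proof of Theorem~\ref{thm:perp},'' which invokes Proposition~\ref{homext} to get $M$ and $P$ with $\alpha = \ul\dim M - \ul\dim P$; for the summands to form a partial virtual semi-tilting set (rather than merely a list of roots in $D(\b)$, which is all Theorem~\ref{thm:perp} needs) one has to take the \emph{generic} presentation so that the decomposition of $M$ is the generic decomposition and the supports of $\mu$ and $\gamma$ are disjoint. You do this cleanly by going straight to Theorem~\ref{thm:generic decomposition} applied to $kx$. Second, the paper asserts without comment that the set of $x$ satisfying the conclusion is closed; your observation that there are only finitely many partial virtual semi-tilting sets in the Dynkin case, so the union of the corresponding closed cones is closed, fills this gap.

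One small remark: the block-diagonal determinant argument in your middle paragraph is more elaborate than necessary. Since $\Hom_Q(M,V)=\Ext^1_Q(M,V)=0$ pass to direct summands, Remark~\ref{fact}(1) immediately gives $\<\b_i,\b\>=\dim\Hom_Q(M_i,V)-\dim\Ext^1_Q(M_i,V)=0$, and likewise $\Hom_Q(P,V)=0$ gives $\b_{v_j}=0$; then Lemma~\ref{lem:perp} and Lemma~\ref{tfae} place everything in $D(\b)$. The global-squareness-forces-block-squareness reasoning is correct but is not where the difficulty lies.
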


\begin{proof}
Since the set of all $x$ satisfying this condition is closed, it suffices to show that it holds for a dense subset of $D(\b)$. So, we may assume that $x\in\QQ^n$. Multiplying by the denominator we may assume $x\in\ZZ^n$. Now, repeat the last step of the proof of Theorem~\ref{thm:perp}.
\end{proof}

Our Main Theorem identifies the codimension one skeleton of $\cT(Q)$ with the supports of semi-invariants.

\begin{thm}\label{TTDSI}
The image in $S^{n-1}$ of the $n-2$ skeleton of $\cT(Q)$ under the homeomorphism $\ll$ is the union of supports of semi-invariants:
\[
	\ll\left(|\cT(Q)^{n-2}|\right)=\bigcup_{\b\in\Phi_+}D(\b)\cap S^{n-1}
\]
\end{thm}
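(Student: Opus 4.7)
The plan is to establish the two set-theoretic inclusions separately. For $\supseteq$, let $x \in D(\b) \cap S^{n-1}$ for a positive root $\b$. By Corollary~\ref{cor:perp}, $x$ is a nonnegative linear combination of the vectors $\LL(\a_j)$ for the vertices $\a_j$ of some partial virtual semi-tilting set, with every $\LL(\a_j) \in D(\b)$. Since $D(\b)$ lies in the $(n-1)$-dimensional hyperplane $H(\b)$ while the $\LL(\a_j)$ are linearly independent (by the linear-independence proposition preceding the definition of $\cT(Q)$), the set $\{\a_j\}$ contains at most $n-1$ elements and hence forms a simplex of $\cT(Q)$ of dimension $\leq n-2$. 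Normalizing the linear combination shows $x \in \ll(|\cT(Q)^{n-2}|)$.

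For the reverse inclusion $\subseteq$, it suffices to show that for each $(n-2)$-simplex $\delta = \{\a_1, \ldots, \a_{n-1}\}$ of $\cT(Q)$ there exists a positive root $\b$ with $\LL(\a_i) \in D(\b)$ for every $i$. Granted this, convexity of $D(\b)$ as a cone (Proposition~\ref{convex} together with closure under positive scaling) will imply $\ll(\Delta^\delta) \subseteq D(\b) \cap S^{n-1}$. By Lemmas~\ref{lem:perp} and~\ref{tfae} the condition $\LL(\a_i) \in D(\b)$ is equivalent to $\langle \LL(\a_i), \b\rangle = 0$, so I need a positive root in the Euler-perpendicular complement of $\textup{span}\{\LL(\a_i)\}$. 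The Euler form is nondegenerate (the matrix $E$ is invertible), so this complement is one-dimensional with primitive integer generator $\pm\b_0 \in \ZZ^n$.

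The crux is then to show that $\b_0$ or $-\b_0$ is a positive root. My plan is to realize $\delta$ as an almost complete cluster tilting object $T = T_1 \oplus \cdots \oplus T_{n-1}$ in $\mathcal{C}_Q$, taking $T_i$ to be the unique indecomposable of dimension $\a_i$ when $\a_i$ is a Schur root and $T_i = P(v)[1]$ when $\a_i = p(v)[1]$. By the BMRRT two-complement theorem together with the perpendicular-category theory of Schofield and Happel--Unger, the right perpendicular of $T$ in $\mmod \k Q$ is, in the Dynkin case, a hereditary category of rank one, and so contains a unique indecomposable module $V$ whose dimension vector $\b := \ul\dim V$ is a Schur root, hence a positive root. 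The defining vanishings $\Hom_Q(T_i, V) = \Ext^1_Q(T_i, V) = 0$ translate (via Useful fact~\ref{fact}.1 for the module summands, and directly for the shifted-projective summands) into precisely $\langle \LL(\a_i), \b\rangle = 0$ for every $i$. Thus $\b = \pm\b_0 \in \Phi_+$ is the required positive root. The main obstacle is importing this perpendicular-category step, which rests on external input not developed in the paper; once accepted, the remainder of the argument is a routine translation through Lemmas~\ref{lem:perp} and~\ref{tfae}.
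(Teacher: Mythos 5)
Your proof proposal is correct in its overall structure, and the $\supseteq$ direction follows the paper's use of Corollary~\ref{cor:perp} (you helpfully make explicit the dimension count — that $D(\b)\subset H(\b)$ has dimension $n-1$, while the vectors $\LL(\a_j)$ are linearly independent — which the paper leaves implicit). For the $\subseteq$ direction you take a genuinely different route. The paper reduces it, just as you do, to showing that for every $(n-2)$-simplex $\{\a_1,\dots,\a_{n-1}\}$ there is a positive root $\b$ with $\langle\a_i,\b\rangle=0$ for all $i$, but it then proceeds by a self-contained induction on $|Q_0|$: if some $\a_i$ is a shifted projective $p(v)[1]$ with $v$ minimal, delete the vertex $v$ and induct on the smaller quiver; if no $\a_i$ is a shifted projective, apply the inverse AR translate $\tau^{-1}$ repeatedly until one is, then undo the translation. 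You instead invoke perpendicular-category theory (Geigle--Lenzing / Schofield / Happel--Unger): replacing each shifted projective $P(v)[1]$ by $P(v)$ gives a rigid module $T'$ with $n-1$ pairwise non-isomorphic indecomposable summands (a small point worth stating explicitly, since it is needed for the rank count), and the perpendicular category $T'{}^\perp$ is then hereditary of rank one, so its unique indecomposable supplies the root $\b$. Both arguments are valid; yours is more conceptual and produces $\b$ in one step, but it imports machinery the paper is evidently avoiding, whereas the paper's induction is longer but self-contained at the cost of some combinatorial bookkeeping. One small inaccuracy: you do not actually need the BMRRT two-complement theorem here — the perpendicular-category statement alone gives the rank-one subcategory and hence the root $\b$; the two-complement result is a consequence in the same circle of ideas, not a prerequisite.
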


\begin{proof}
The statement is equivalent to saying that $\cup D(\b)$ is equal to the union of rays eminating from $0$ and passing through the $n-2$ skeleton of $\cT(Q)$. Corollary \ref{cor:perp} implies that each $D(\b)$ is contained in this union of rays. To prove the converse it suffices to show that any partial tilting set $\{\a_1,\cdots,\a_{n-1}\}$ is contained in $D(\b)$ for some positive root $\b$. By Lemma \ref{lem:perp} this is equivalent to saying that $\langle \a_i,\b\rangle=0$ for each $\a_i$. This is trivially true when $n=1$. So we may assume that $n\geq2$ and the statement holds for all Dynkin quivers with fewer vertices.

Suppose for a moment that one of the $\a_i$ is a shifted projective $p(v)[1]$. If $v$ is minimal then all other roots $\a_j$ have support disjoint from $v$. So, we can delete the vertex $v$ and delete the root $p(v)[1]$ to obtain a partial tilting set on $Q'$, the subquiver of $Q$ given by deleting the vertex $v$ and all arrows to and from $v$. By a counting argument we see that this consists of a partial tilting set on one of the components of $Q'$ and a complete tilting set on the other components. By induction on $n$ there is a positive root $\b$ on the first component so that $\<|\a_i|,\b\>=0$ all the $i$. This gives the desired root for $Q$ proving the theorem in this case.

If none of the $\a_i$ is a shifted projective we use the inverse translation $\t^{-1}$. Let $m>0$ be minimal so that at least one $\t^{-m}\a_i$ is a shifted projective. By the previous case there is a positive root $\b$ so that $\<|\t^{-m}\a_i|,\b\>=0$ for all $i$. Then $\<\a_i,|\t^m\b|\>=0$ for all $i$. So $\{\a_i\}\subset D(|\t^m\b|)$ as claimed.
\end{proof}


\end{document}